\setlist[itemize,1]{leftmargin=\dimexpr 26pt}
\newcommand{\RR}{{\mathbb R^2}}
\newcommand{\Hd}{\mathcal H}
\newcommand{\Lb}{\mathcal L}
\newcommand{\T}{\mathcal T}
\newcommand{\D}{\mathcal D}
\newcommand{\F}{\mathcal F}
\newcommand{\I}{\mathcal I}
\newcommand{\Q}{\mathcal Q}
\newcommand{\C}{\mathcal C}
\newcommand{\bxi}{\boldsymbol \xi}
\newcommand{\bet}{\boldsymbol \eta}
\newcommand{\dd}{\,\mathrm d}
\newcommand{\norm}{\boldsymbol \nu}
\newcommand{\bx}{\boldsymbol x}
\newcommand{\by}{\boldsymbol y}
\renewcommand{\div}{\mathrm{div}\,}
\newcommand{\Per}{\mathrm{Per}}
\newcommand{\sgn}{\mathrm{sgn}\,}
\newcommand{\ess}{\mathrm{ess}\,}
\newcommand{\argmax}{\mathrm{arg}\,\mathrm{max}\,}
\newcommand{\argmin}{\mathrm{arg}\,\mathrm{min}\,}
\newcommand{\supp}{\mathrm{supp}\,}
\newcommand{\J}{\mathcal J}
\newcommand{\R}{\mathcal R}
\renewcommand{\S}{\mathcal S}
\newcommand{\dist}{\mathrm{dist}}
\newcommand{\interior}{\mathrm{int}\,}
\newcommand{\z}{\boldsymbol z}
\newcommand{\res}               {\!\!\mathop{\hbox{
                                \vrule height 7pt width .5pt depth 0pt
                                \vrule height .5pt width 6pt depth 0pt}}
                                \nolimits}
\newtheorem{thm}{Theorem}
\newtheorem{lemma}{Lemma}
\newtheorem{defn}{Definition}
\newtheorem{ex}{Example}
\newtheorem*{remark}{Remark}
\def\XXint#1#2#3{{\setbox0=\hbox{$#1{#2#3}{\int}$ }
\vcenter{\hbox{$#2#3$ }}\kern-.6\wd0}}
\author{Micha{\l} {\L}asica$^*$, \quad Salvador 
Moll$^\dagger$, \quad  Piotr B.\;Mucha$^*$ \vspace{5pt}\\
\small $^*$ Institute of Applied Mathematics and
Mechanics, University of Warsaw \\
\small $^\dagger$ Departament d'An\` alisi Matem\` atica, Universitat de
Val\` encia}
\title{Total variation denoising in $l^1$ anisotropy}
\date{\today}
\begin{document}

\maketitle

\begin{abstract}
We aim at constructing solutions to the minimizing problem for the variant of
Rudin-Osher-Fatemi denoising model with rectilinear anisotropy and to the
gradient flow of its underlying anisotropic total variation functional. We 
consider a naturally defined class of functions piecewise constant on 
rectangles ($PCR$). This class forms a strictly dense subset of the space of 
functions of bounded variation with an anisotropic norm. The main result
shows that if the given noisy image is a $PCR$ function, then solutions to both
considered problems also have this property. For $PCR$ data the 
problem of
finding the solution is reduced to a finite algorithm. We discuss some 
implications
of this result, for instance we use it to prove that continuity is preserved by
both considered problems.
\end{abstract}
\smallskip

\begin{center} 
 MSC: 68U10, 35K67, 35C05, 49N60, 35B65
\end{center}

\noindent Keywords: denoising, Rudin-Osher-Fatemi model, total variation flow,
anisotropy, rectangles, rectilinear polygons, piecewise constant
solutions, regularity, tetris

\tableofcontents

\section{Introduction}\label{intro}
In \cite{es-os}, the authors introduced the anisotropic version of the
celebrated model by Rudin, Osher and Fatemi (ROF) \cite{ru-os-fa} of
total variation based noise removal from a corrupted image.
The idea was to substitute the total variation term in the energy functional
\begin{equation}\label{ROF}
\int_{\Omega} |Du|+\frac{1}{2\lambda}\int_{\Omega}(u_0-u)^2\dd \Lb^2,
\end{equation}
by an anisotropic total variation term suitably chosen for a particular given 
image:
\begin{equation}\label{AROF}
\int_{\Omega} |Du|_\varphi
+\frac{1}{2\lambda}\int_{\Omega} (u_0-u)^2\dd\Lb^2.
\end{equation}
Although from the point of view of image processing it is most natural to 
consider the domain $\Omega$ being a rectangle, in principle it can be any open 
bounded set with reasonably regular (e.\,g.\;Lipschitz) boundary, or the whole 
plane $\mathbb R^2$.  The function $|\cdot|_\varphi\colon\mathbb R^2\to 
[0,+\infty[$ encoding the anisotropy is assumed to be convex, positively
1-homogeneous and such that $|\bx|_\varphi> 0$ if $\bx\neq 0$.
Observe that \eqref{AROF} is a generalization of \eqref{ROF} for which
$|\cdot|_\varphi$ is the Euclidean norm, $|\cdot|$. In this case, the 
associated 
\emph{Wulff
shape},
\[W_\varphi:=\{\by\in\mathbb R^2 \colon \by\cdot \bx\leq
|\bx|_\varphi \text{ for all }\bx\in \mathbb R^2\},\]
is exactly the unit ball
with respect to the Euclidean distance. Because of that, minimizers of 
\eqref{ROF} give rise to convex shapes which are smooth (as is the Euclidean
ball). If, instead, $|\cdot|_\varphi$ is a crystalline anisotropy
(in the sense that the Wulff shape is a polygon), then minimizers of
\eqref{AROF} give rise to convex shapes which are compatible with the Wulff
shape, and therefore not smooth anymore.

This new approach has been successfully applied to most of the
classical problems in image processing including denoising (see
\cite{se-ste-teu},\cite{go-os} and \cite{gras-len}), cartoon extraction
\cite{be-bu-dr-ne-ru}, inpainting \cite{cha-se-ste}, deblurring \cite{che-wa-li}
or denoising and deblurring of 2-D bar codes \cite{chos-ge-ob}. In
most of these works, the chosen anisotropy is the $l^1$ norm in the plane;
i.\,e.\;$|\bx|_\varphi=|\bx|_1:=|x_1|+|x_2|$. In this case, the corresponding
Wulff shape is the unit ball with respect to the $l^\infty$ distance; i.\,e.\;a
square.

In the present paper, we focus on the case $|\bx|_\varphi=|\bx|_1$. We give 
an explicit expression for the
minimizer when the corrupted image $u_0$ belongs to the class of functions
piecewise constant on rectangles (as in the case of applications), denoted by
$PCR(\Omega)$ (see
section \ref{sec:rect} for precise definitions). The
minimizer turns out to belong to  $PCR(\Omega)$.

Let us briefly explain the algorithm for construction of the minimizer. 
Given a function $u_0\in PCR(\Omega)$, we consider $G_{u_0}$, the minimal 
grid associated to the level sets of $u_0$ (which are rectilinear polygons, 
precise definitions are given in section \ref{sec:rect}). Then, we construct 
level sets $F_k$ of $u$, starting with highest values of $u$, as follows:
\begin{itemize}
  \item{Step $1$.} Take as $F_1$ the largest minimizer of the following 
\emph{Cheeger quotient} $\J_1$ among all possible rectilinear polygons $E$ 
contained in $\overline{\Omega}$ subordinate 
to $G_{u_0}$: 
\[\J_1(E) = \frac{\Hd^1(\partial E \cap \Omega)-\frac{1}{\lambda}\int_E u_0\, 
\dd\mathcal 
L^2}{\mathcal L^2(E)}.\] 
  \item{Step $k$.} Denote $\check F_k = \bigcup_{i=1}^{k-1} F_i$. If 
$\overline{\Omega}
= \check F_k$, stop. Otherwise, denote by $F_k$ the largest minimizer of the 
following Cheeger quotient $\J_k$ among all possible rectilinear polygons 
contained in 
$\overline{\Omega\setminus \check F_k}$ subordinate to $G_{u_0}$: 
\[\J_k(E) = \frac{\Hd^1(\partial E \cap \Omega \setminus \check F_k)-\mathcal 
H^1(\partial E\cap \partial \check 
F_k)-\frac{1}{\lambda}\int_E u_0\, \dd\mathcal L^2}{\mathcal L^2(E)}.\]
  \end{itemize}
In each rectilinear polygon $F_k$ of resulting decomposition of 
$\Omega$, we define 
\begin{equation}
\label{uJk}
u|_{F_k} = - \lambda \J_k(F_k). 
\end{equation}

In order to prove that $u$ given by this 
algorithm is in fact 
the minimizer, we perform a rather involved mathematical analysis starting 
from the following observation (the Euler-Lagrange equation for \eqref{AROF}): 
$u$ is a minimizer of \eqref{AROF} if and 
only if $\frac{u-u_0}{\lambda}$ belongs to negative subdifferential of the 
energy functional $TV_{\varphi,\Omega}$ on $L^2(\Omega)$ defined  by
\[TV_{\varphi,\Omega}(u) =  \left\{\begin{array}{ll}
                                         \int_{\Omega} |Du|_\varphi &
\text{ if } u \in BV(\Omega), \\
+\infty & \text{ otherwise.} \\
\end{array}\right. \]

This anisotropic energy functional was studied in \cite{moll} in cases that
$\Omega=\mathbb R^N$ or $\Omega$ is a bounded, open, smooth subset of
$\mathbb R^N$, coupled with Dirichlet boundary conditions. The
author characterized the subdifferential as the set of elements of form $\div 
\bxi$ with $\bxi$ satisfying certain conditions (see Theorem 
\ref{ch_subd}). In the case that $\Omega$ is a rectilinear polygon and 
$u_0 \in PCR(\Omega)$, the condition that the 
solution $u$ to
\begin{equation}
 \label{1-min}
 \min_{u \in BV(\Omega)} TV_{1, \Omega}(u) + \frac{1}{2\lambda}\int_\Omega
(u-u_0)^2 \dd \Lb^2
\end{equation}
also belongs to $PCR(\Omega)$ follows from finding a vector field $\bxi\in 
L^\infty(\Omega)$ such that $|\bxi|_\infty \leq
1$ $\Lb^2$-a.\,e., a suitable compatibility condition
on the jump set of $u$ is satisfied (Lemma \ref{charac_subdiff}) and $\div 
\bxi$ 
is piecewise constant on rectangles. Note that once we know that there are 
such $u$ and $\bxi$, and rectilinear 
polygons $F_k$ are ordered level sets of $u$, \eqref{uJk} follows by averaging 
$u = u_0 + \lambda \div \bxi$ 
over each $F_k$.  

We construct the vector field $\bxi$ together with $u$ in Theorem
\ref{minimization} by
minimizing the $L^2$ norm of the divergence over vector fields 
satisfying compatibility conditions. Then, in order to show that 
the divergence is piecewise constant on rectangles, we rely on an auxiliary 
result (Theorem
\ref{cheeger}) in which we prove that a certain anisotropic Cheeger-type
functional on sets (related to the algorithm sketched above) is indeed 
minimized by a 
rectilinear polygon. In the proof of that
result, an important point is that, due to the structure of the Cheeger
quotient, we construct approximate minimizers that belong to
a finite class of rectilinear polygons subordinate to $G_{u_0}$.
As the set of characteristic functions of rectilinear polygons is not compact 
with respect to $TV_{1, \Omega}$, this finiteness is essential.

 On the other hand, our analysis shows that any function piecewise constant on
rectangles belongs to domain of the subdifferential of $TV_{1, \Omega}$ (Lemma 
\ref{charac_subdiff}), and
that this class of functions is preserved by the gradient descent flow of this
functional,
 \begin{equation}
  \label{1-tvflow}
  \left\{\begin{array}{l} u_t \in - \partial TV_{1, \Omega}, \\
u|_{t=0}= u_0.\end{array}\right.
 \end{equation} 
In principle, one could try to 
deduce this result from Theorem
\ref{minimization} by analyzing the discretization of \eqref{1-tvflow} with
respect to time variable (which coincides with a sequence of problems of form
\eqref{1-min} with $\lambda = \Delta t$). Instead, in Theorems \ref{z} and
\ref{solutionform} we do this directly, constructing the vector field that
encodes the solution by means of a number of variational problems. This way we
obtain a finite explicit algorithm for obtaining $u$, with a different
structure than that in Theorem \ref{minimization}. In particular, we use the
semigroup property of solutions to \eqref{1-tvflow}. The results in this case
say that in any of a finite number of intervals between two subsequent time
instances of \emph{merging}, $u_t$ is a fixed function in $PCR(\Omega)$. The 
exact form of $u_t$ is (again) determined by solving a number of (different) 
Cheeger-type problems. In this case, the algorithm is slightly more 
complicated. 
 Given a function $w\in PCR(\Omega)$, we consider again $G_w$ as the minimal 
grid associated to the level sets of $w$. For each level set $Q$, we 
label each part of the boundary  as positive, and we say that 
it belongs to $\partial Q^+$ (resp.\;negative, $\partial Q^-$), if the value of 
$u(t, \cdot)$ inside $Q$ is higher than (resp.\;lower than) the value 
of the level set adjacent to this part of the boundary (thus, we define a 
consistent signature, see subsection \ref{sec:rect}). Then, we produce a 
decomposition of each level set into a family of rectilinear polygons and 
related consisted signature by means of an algorithm similar to the one for the 
minimizer. Finally, to each rectilinear polygon in the decomposition, a constant 
related to the signature is assigned. It is proved that $u_t$ coincides 
with this exact constant (up to next merging time, when the algorithm has to be 
reinitialized). 

We stress that the problem of determining evolution is nontrivial, as at time 
instances of merging, \emph{breaking} may occur along certain line segments, 
leading to expansion of jump set of the solution.

As $\partial TV_{1, \Omega}$ is a monotone operator,  for any
datum $u_0\in L^2(\Omega)$ and a sequence $u_{0,n} \in L^2(\Omega)$,
$n=1,2,\ldots$ such that $u_{0,n}\to u_0$ in $L^2(\Omega)$,
solutions\footnote{Note that both problems \eqref{1-min} and \eqref{1-tvflow}
give rise to one parameter families of functions in $BV(\Omega)$ (in one case
indexed by $\lambda$, in the other --- by $t$). In many cases they coincide, at
least for a range of the parameter (see Theorem \ref{equiv}). If
we refer to \emph{solutions} without precise context, we mean both solutions to
\eqref{1-min} and \eqref{1-tvflow}.} $u_n$ with datum $u_{0,n}$ converge to the
solution with datum $u_0$. It is easy to check that $PCR(\Omega)$ is dense in
$L^2(\Omega)$. In fact, $PCR(\Omega)$ is even strictly
dense in $BV(\Omega)$ (in the sense of seminorm $\int_\Omega |\nabla 
u|_1)$, see \cite[Theorem 3.4]{casaskunischpola}. Therefore, we do not only
give the explicit solution when initial datum belongs to $PCR(\Omega)$, but we
provide an algorithm to compute the solution for any initial corrupted image
with the most natural approximation to it (with functions belonging to the
domain of the subdifferential). 

The idea of a finite dimensional
approximation of problem \eqref{1-min} based on $PCR$ functions is already
present in literature. For instance, in \cite{fitzpatrickkeeling}, the authors
prove that the solutions to \eqref{1-min} where the functional is replaced
with its restrictions (discretizations) to functions piecewise constant on 
finer and finer grids
(and datum $u_0$ replaced by its suitable projections) converge to the solution
to \eqref{1-min}. Our result implies that minimizers to those discrete 
problems are
themselves actual solutions to \eqref{1-min} (with projected datum). 

For a typical example, the space of $PCR$ functions associated 
with the $(M+1) \times (N+1)$ Cartesian grid in a rectangle $\Omega = [0,M] 
\times [0,N]$ is isomorphic to $\mathbb R^{M \times N} = (u_{i,j};\; 
i=1,\ldots, M;\; j = 1, \ldots,N)$. Our result shows that the functional 
$TV_{1, \Omega}$ restricted to this subspace of $PCR(\Omega)$ is equivalent 
to the discrete Ising-type functional 
\begin{equation} 
 \label{ising} 
\sum \{|u_{i,j} - u_{k,l}| \colon 
i,k=1,\ldots,M; \;j,l = 1, \ldots,N;\; |i-k|+|j-m|=1\}.
 \end{equation}
This information means that one can use one of many efficient algorithms, such 
as graph-cut based algorithms (see e.\,g.\;\cite{ChambolleDarbon2009, 
Hochbaum2013} and references therein) devised for minimizing 
discrete functionals involving terms of type \eqref{ising} to obtain the exact 
(up to machine error) solution to \eqref{1-min}. We note here that the 
algorithm proposed by us is of theoretical significance as a tool 
allowing us to prove Theorem \ref{minimization}. 

 Our approach allows us to prove some continuity results about solutions to
\eqref{1-min} and \eqref{1-tvflow}. In particular, if $\Omega$ is
a rectangle or the plane, we prove that if the datum $u_0$ admits a modulus of
continuity of a certain form, then solutions do as well (Theorems
\ref{contpres} and \ref{contpresf}). Analogous results were obtained in the
isotropic case in \cite{CaChNodisc, CaChNoreg}. The method there involves
considering distance between level sets of solution. First, the authors show
that the jump set of solution is contained (up to a
$\Hd^1$-negligible set) in the jump set of initial datum. We point out that
such a result is not true in our case since breaking may appear (see Example
\ref{exbreaking}). Very recently, the continuity result for minimizers of
\eqref{AROF} in convex domains has been proved to hold in the case of general
anisotropies with different methods \cite{mercier}. We still choose to
include continuity results in this paper, mainly because the technique used
here is very much different. The results are basically corollaries of Lemmata
\ref{lemmacont} and \ref{lemmacontf}, which assert non-increasing of maximal
jump on any length scale for $PCR$ data. The Lemmata are of independent interest
from the point of view of computations, as discrete versions of continuity
estimates. Example \ref{excrack} shows
that continuity is not preserved in general, either by \eqref{1-min} or
\eqref{1-tvflow}, if $\Omega$ is not convex.

At this point we note that our results can be seen as generalization of 
observations concerning 1-D problems with total variation. Indeed, in 1-D 
it is easy to see that piecewise constant data are preserved (and consequently, 
 that continuity is preserved), in fact much more is true, as in that 
case $|\nabla u| \leq |\nabla u_0|$ as measures \cite[Corollary 3.2]{briani}. 
The particular simplicity of 1-D case allows for very detailed description of 
solutions (see e.\,g.\;\cite{bonfortefigalli, kielak, ring2000structural}). 

Finally, we remark that our description of solutions shares a connection on the 
formal level to ideas in \cite{burgergilboa} (see also other papers 
referenced there). In \cite{burgergilboa}, the authors show that solutions to 
gradient flow equations of typical discretizations of linear growth functionals 
are piecewise linear in time, and give an explicit expression involving 
suitably defined nonlinear spectral decomposition. Having obtained finite 
reduction of \eqref{1-tvflow} in Theorem \ref{solutionform}, we can use 
it to recover spectral decomposition of $u_0 \in PCR(\Omega)$ with 
respect to \eqref{1-tvflow} via \cite[Conclusion 2 and Theorem 
4.13]{burgergilboa}. 

The plan of the paper is as follows. In section \ref{prelim}, we give some
notation and preliminaries on rectilinear geometry, $BV$ functions,
$L^2$-divergence vector fields as well as the anisotropic total variation and
its gradient
descent flow in $L^2$. In section \ref{auxiliary}, we study an auxiliary 
Cheeger-type problem in
rectilinear geometry. Next, in sections \ref{mini} and \ref{facets} we give
explicit solutions to \eqref{1-min} and \eqref{1-tvflow} in the case that $u_0
\in PCR(\Omega)$, where $\Omega$ is a rectilinear polygon. In section
\ref{cauchy}, we transfer the results to the case $\Omega=\RR$. The 
idealized setting of the whole plane $\RR$ is convenient for discussing 
examples (from the point of view of images, it corresponds to a discrete 
feature set against a uniform background). Since the construction of solutions 
is similar to the previous cases, we only point out
the main differences and state the results. Section \ref{regularity} is devoted
to the study of preservation of moduli of continuity. Finally, in section
\ref{examples}, we show the power of our approach by explicitly computing the
solutions for some data, including the effects of bending and creation of
singularities. After that we end up with some conclusions. 
\section{Notation and preliminaries}\label{prelim}
\subsection{Balls} By $B_\varphi(\bx, r)$ we denote the ball in $\mathbb R^N$ 
with
respect to norm $|\cdot |_\varphi$, centered at $\bx$, of radius $r$. For the
ball with respect to the Euclidean norm, we write simply $B(\bx, r)$. Symbols
$B_\varphi(r), B(r)$ stand for balls centered at the origin.

\subsection{Measures. Lebesgue and Bochner spaces} 
We denote by $\mathcal{L}^N$ and $\mathcal{H}^{N-1}$ the $N$-dimen\-sio\-nal
Lebesgue measure and the $(N-1)$-dimensional Hausdorff measure in $\mathbb R^N$,
respectively. If $A \subset \mathbb R^N$ is a set of positive (possibly 
inifinite) $\Lb^N$ measure, we denote by $L^p(A)$, $1 \leq p \leq \infty$ the 
Lebesgue space of functions integrable with power $p$ with respect to $\Lb^N$. 
On the other hand, if $A \subset \mathbb R^N$ 
has finite $\Hd^{N-1}$ measure (e.\,g.\;$A$ is the boundary of a Lipschitz 
domain), $L^p(A)$ denotes the Lebesgue space of functions integrable 
with power $p$ with respect to $\Hd^{N-1}$. We adopt 
similar notation for spaces $L^p(A, \mathbb R^k)$, $k=2,3, \ldots$ Whenever 
it is clear, we adopt the convention that an equality or inequality between two 
measurable functions holds in the sense of Lebesgue spaces, i.\,e.\;almost 
everywhere with respect to the corresponding (implicitly specified) measure, 
unless otherwise stated.

If $]T_1, T_2[ \subset \mathbb R$ and $X$ is a Banach space, we denote by 
$L^p(]T_1, T_2[, X)$ the usual space of Bochner measurable functions $f \colon 
]T_1, T_2[ \to X$ s.\,t.\;$\int_{T_1}^{T_2} \|f\|_X^p < \infty$. 
By $L_w^p(]T_1, T_2[, X)$ we denote the analogous space of 
weakly measurable functions (see \cite[Chapter I]{barbu}). 

\subsection{Rectilinear polygons}
\label{sec:rect}
We denote by $\R$ the set of closed rectangles in the plane whose sides are 
parallel to the coordinate axes, and by $\I$, the set of all horizontal and 
vertical closed line segments of finite length in the plane.

\begin{wrapfigure}{r}{0.3\textwidth}
  \begin{center}
    \includegraphics[width=0.25\textwidth]{./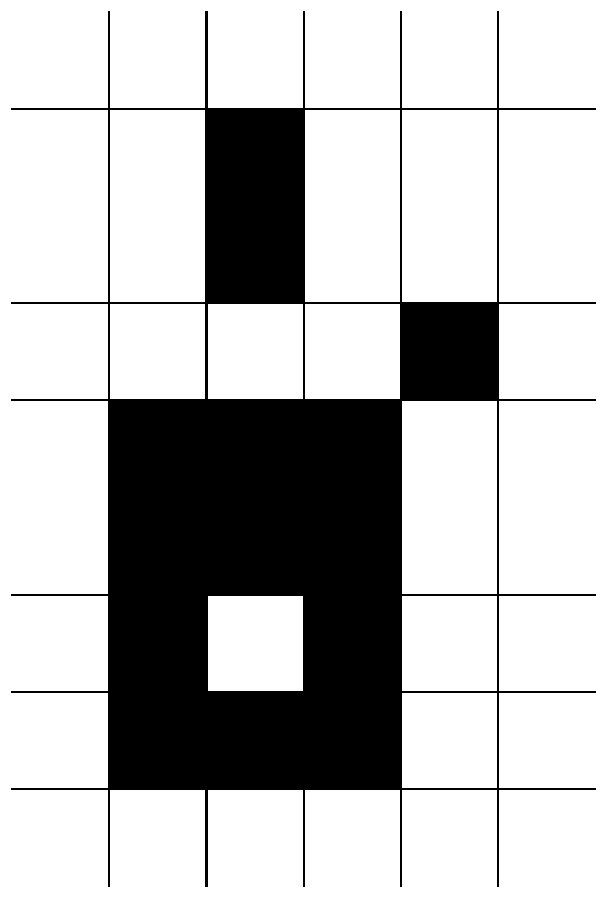}
  \end{center}
  \caption{An example of a rectilinear polygon $F$ and a grid $G$.
There holds $G = G(F)$ and $F \in \F(G)$.}
\end{wrapfigure}

We call $F \subset \mathbb R^2$ a \emph{rectilinear polygon} if $F=\bigcup \R_F$
with a finite $\R_F \subset \R$. We denote by $\F$ the family of all rectilinear
polygons. Similarly, we call $C \subset \RR$ a \emph{rectilinear curve} if $C =
\bigcup \I_C$ with a finite $\I_C \subset \I$. We denote by $\C$ the set of all
recilinear curves.

We call any finite set $G$ of horizontal and vertical lines in the plane a
\emph{grid}. If $F$ is a rectilinear polygon, we denote by $G(F)$ the minimal
grid such that each side of $F$ is contained in a line belonging to $G(F)$.
If $C$ is a rectilinear curve, we denote by $G(C)$ the minimal grid
with the property that there exists $\I_C \subset \I$, $C = \bigcup \I_C$
such that all endpoints of intervals in $\I_C$ are vertices of $G(C)$.

Given a grid $G$, we denote
\begin{itemize}
 \item by $\I(G)$ the set of line segments connecting adjacent vertices of $G$,
 \item by $\R(G)$ the set of rectangles whose sides belong to $\I(G)$,
 \item by $\F(G)$ the set of rectilinear polygons of form $\bigcup \R_F$ with a 
finite non-empty $\R_F \subset \R(G)$.
\end{itemize}
Note that all of the above are finite sets.

It is also convenient to introduce the following notions of partitions of 
rectilinear polygons and signatures for their boundaries.
Let $\Omega$ be a rectilinear polygon. We say that a finite family $\Q$ of
rectilinear polygons with disjoint interiors is a partition of $\Omega$ if
$\Omega = \bigcup \Q$. If $G$ is a grid, we say that a partition $\Q$ of
$\Omega$ is subordinate to $G$ if $\Q \subset \F(G)$. Let $F$ be a rectilinear
polygon and let $G$ be a grid. We say that $(\partial F^+, \partial F^-) \in
\C \times \C$ is a \emph{signature} for $\partial F$ (or for $F$) if $\partial
F^\pm
\subset \partial F$ and $\Hd^1(\partial F^+ \cap \partial F^-) = 0$. We say that
a signature $(\partial F^+, \partial F^-)$ for $\partial F$ is subordinate to
$G$ if both $\partial F^\pm$ are subordinate to $G$. We say that
\[\S\colon \Q \ni Q \mapsto \S(Q) = (\partial Q^+, \partial Q^-) \in \C \times
\C\]
is a \emph{consistent signature} for $\Q$ if
\begin{itemize}
 \item for each $Q \in \Q$, $\S(Q) = (\partial Q^+, \partial Q^-)$ is a
signature for
$\partial Q$ and
 \item for each pair $Q, Q' \in \Q$, if $\bx \in
\partial Q^\pm \cap Q'$ then $\bx \in \partial Q'^\mp$.
\end{itemize}
We say that a consistent signature $\S$ for $\Q$ is subordinate to $G$ if
for each $Q \in \Q$, $\S(Q)$ is subordinate
to $G$.

\begin{figure}[h!]
    \centering
    \begin{subfigure}{0.3\textwidth}
        \includegraphics[width=\textwidth]{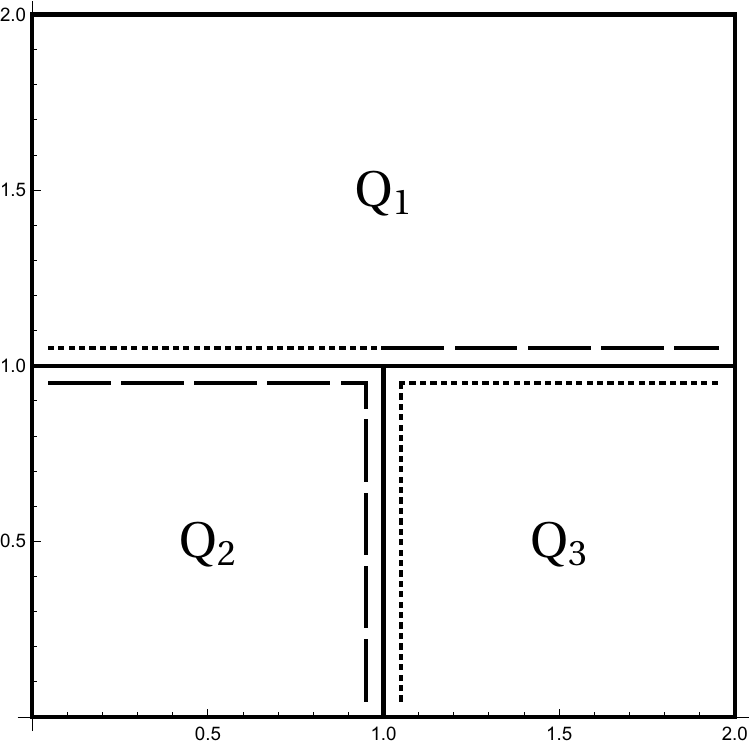}
        \caption{A consistent singature $\S \colon Q_i \mapsto (\partial Q_i^+,
\partial Q_i^-)$ for $\Q = \{Q_1,
Q_2, Q_3\}$. Dashed lines denote $\partial Q_i^+$, dotted lines denote
$\partial Q_i^-$,
$i=1,2,3$. }
        \label{fig:exsign1}
    \end{subfigure}
    \quad
    \begin{subfigure}{0.3\textwidth}
        \includegraphics[width=\textwidth]{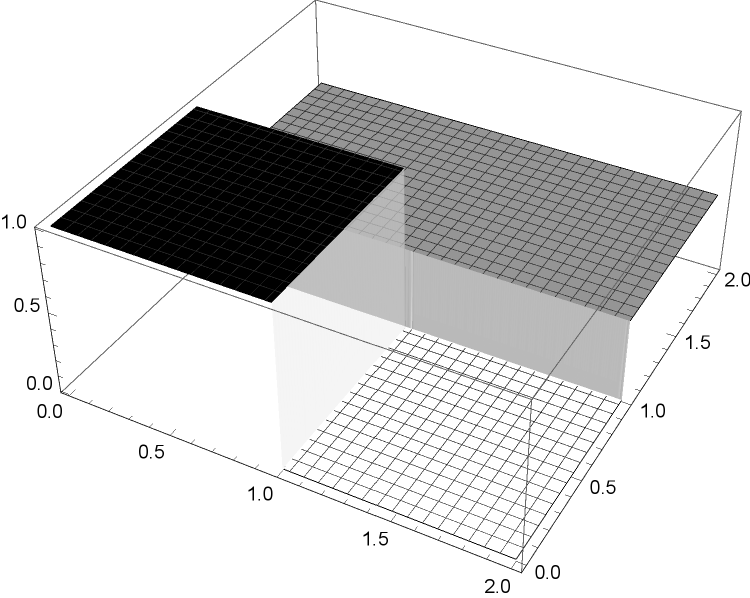}
        \caption{The graph of a function $w \in PCR([0,2]^2)$ such that
$\S = \S_w$.}
        \label{fig:ex1sign3}
    \end{subfigure}
    \quad
    \begin{subfigure}{0.3\textwidth}
        \includegraphics[width=\textwidth]{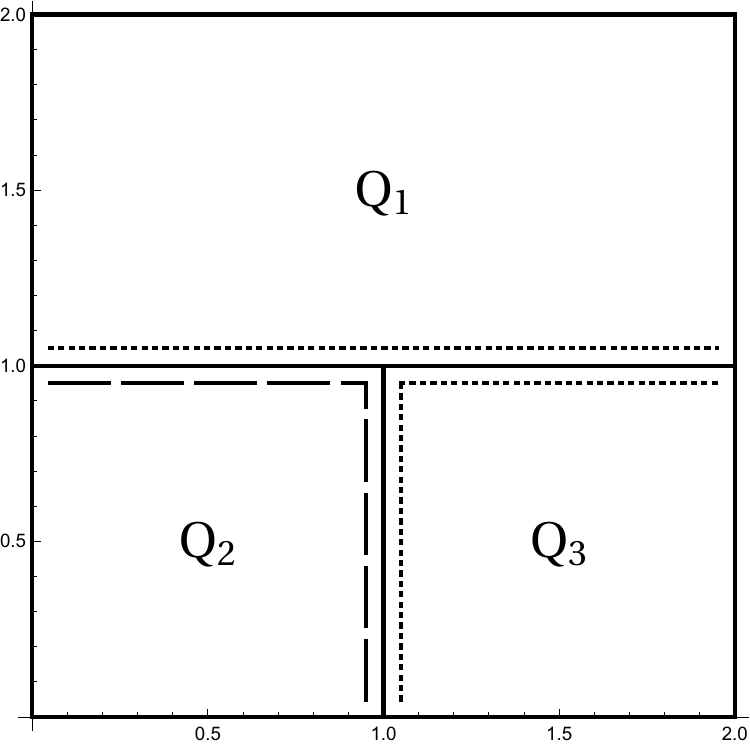}
        \caption{Not a consistent signature.}
        \label{fig:exsign2}
    \end{subfigure}
    \caption{Examples of signatures. }
\label{fig:exsign}
\end{figure}

Now, we  give a precise definition of the class of functions piecewise
constant on rectangles that we will work with. Let $\Omega$ be a rectangle
and let $w \in L^1(\Omega)$. We write
$w \in PCR(\Omega)$ if $w$ has a finite number of level sets of positive
$\Lb^2$ measure, and each one is a rectilinear polygon up to a $\Lb^2$-null
set. We denote the family of level sets of a function $w \in PCR(\Omega)$ by
$\Q_w$. $\Q_w$ is a partition of $\Omega$ in the sense of the definition in
the previous paragraph.

Furthermore, we put $G_w = \bigcup_{Q \in \Q_w}G(Q)$,
$\I_w
= \I(G_w)$, $\R_w = \R(G_w)$, $\F_w = \F(G_w)$. Again, these are all
finite sets.

Given $w \in PCR(\Omega)$ we define the \emph{signature induced by
$w$}, $\S_w \colon \Q_w \ni Q \mapsto (\partial Q^+,
\partial Q^-)$, setting
 \[ \partial Q^+ = \{\bx \in \partial Q \colon \bx \in Q' \in \Q_w,
w|_{Q'} < w|_{Q}\}\]
\[\partial Q^- = \{\bx \in \partial Q \colon \bx \in Q' \in \Q_w, w|_{Q'}
> w|_{Q}\},\]
 for each $Q \in \Q_w$. Here and in many other places we abuse
notation slightly, identifying the constant function $w|_Q$
with its value. The signature induced by
$w$ is a
consistent signature for $\Q_w$ subordinate to $G_w$.

\subsection{Functions of bounded variation and sets of finite perimeter}
 We use standard notation and concepts related to $BV$ functions as in
\cite{ambrosiofuscopallara}; in particular, given $u\in BV(\Omega)$, we
write $\nabla
u\Lb^N$ and $D^s u$ for the absolutely continuous and singular part
of $Du$ with respect to the Lebesgue measure $\Lb^N$, $u^\pm(x)$ for
the lower and upper approximate limits of $u$ at $x \in \Omega$ and
$J_u$ for its jump set, i.\,e.\;the set of points where $u^+ \neq u^-$. Finally,
$\frac{Du}{|Du|}$ denotes the Radon-Nikodym derivative
of $Du$ with respect to its total variation $|Du|$.

The family $PCR(\Omega)$ introduced in the previous subsection is a linear
subspace of $BV(\Omega)$. If $w \in PCR(\Omega)$, we have
\[J_w = \bigcup_{Q
\in \Q_w} \partial Q^+ = \bigcup_{Q \in
\Q_w} \partial Q^-, \qquad w^\pm|_{\partial Q^\pm} = w|_Q\,,\]
where $Q \mapsto (\partial Q^+, \partial Q^-)$ is the
signature induced by $w$. Furthermore, we have
\begin{equation}
 \label{Dw}
 |Dw| = (w^+ - w^-) \Hd^1 \res J_w = \bigcup_{Q \in \Q_w} w|_Q (\Hd^1
\res \partial Q^+ - \Hd^1
\res \partial Q^-).
\end{equation}

Given an open set
$\Omega\subseteq \mathbb R^N$ and a Lebesgue measurable subset $E$ of $\mathbb
R^N$, we say that $E$ has finite perimeter in $\Omega$ if $\chi_{E}\in
BV(\Omega)$ and we write ${\rm Per}(E,\Omega):=|D\chi_{E}|(\Omega)$. If $E$ has 
finite
perimeter in $\mathbb R^N$, we write ${\rm Per}(E):={\rm Per}(E,\mathbb R^N)$.

If $E$ is a set of finite perimeter in $\mathbb R^N$, the jump set of $\chi_E$
is $\Hd^{N-1}$-equivalent to the \emph{reduced boundary} $\partial^*
E$ defined by
the following.\footnote{This is the one point where we choose the notation as
in
e.\,g.\;\cite{giustibook, bnp2} over \cite{ambrosiofuscopallara}.} We say a
point $\bx \in \mathbb R^N$ belongs to $\partial^* E$ if $|D
\chi_E|(B(\bx, \varrho))>0$ for all $\varrho >0$ and quantity $\frac{D
\chi_E(B(\bx, \varrho))}{|D \chi_E|(B(\bx, \varrho))}$ has a limit that belongs
to $\mathbb S^{N-1}$ as $\varrho \to 0^+$. If these conditions hold, we denote
this limit by $\norm^E(\bx)$. There holds
\[ \partial^* E \subset \partial^{\frac{1}{2}} E = \left \{\bx \in \mathbb R^N : 
\lim_{\varrho \to 0^+}\frac{\Lb^N(B(\bx, \varrho) \cap E)}{\Lb^N(B(\bx, 
\varrho))} =\frac{1}{2}\right\},\]
also $\Hd^{N-1}\left(\partial^\frac{1}{2} E \setminus \partial^* E\right) = 0$ 
and $\Hd^{N-1}$-almost every point in $\mathbb R^N$ is either a Lebesgue point 
for $\chi_E$ or belongs to $\partial^* E$.

\subsection{Traces of $L^2$-divergence vector fields}
We consider the space
\begin{equation}
X_\Omega=\left\{\z\in L^{\infty}(\Omega,\mathbb R^N)\,:\
\,\div \z\in L^2(\Omega)\right\}\,.
\end{equation}
In \cite[Theorem 1.2]{anzellotti1}, the weak trace on the boundary of a
bounded Lipschitz domain $\Omega$ of the
normal component of $\z \in X_\Omega$ is defined. Namely, it is proved
that the formula
\begin{equation}
\left\langle [\z,\norm^\Omega]\,,\,\rho\right\rangle:=\int_{\Omega} \rho\,
\div \z \dd \Lb^N + \int_{\Omega}\z\cdot\nabla\rho\dd \Lb^N\,\qquad (\rho\in
C^1\left(\overline{\Omega}\right))
\end{equation}
defines a linear operator $[\cdot,\norm^\Omega]: X_\Omega
\rightarrow
L^\infty(\partial \Omega)$ such that
\begin{equation}
 \label{linftytrace}
\Vert \, [\z,\norm^\Omega] \, \Vert_{L^\infty(\partial
\Omega)} \leq \Vert \z
\Vert_{L^\infty(\Omega)}
\end{equation}
for all $\z \in X_\Omega$ and $[\z,\norm^\Omega]$ coincides with the
pointwise trace of the normal component if $\z$ is smooth.

\subsection{The anisotropic total variation. The anisotropic perimeter}
\label{sec:atv}
We recall here the notion of anisotropic total variation introduced in 
\cite{amarbellettini}. Given an open set $\Omega\subseteq \mathbb R^N$, a norm 
$|\cdot |_\varphi$ on $\mathbb R^N$, and a function $u\in L^2(\Omega)$, we 
define
$$TV_{\varphi, \Omega}(u) :=\sup\left\{\int_\Omega u \, \div \bet \dd \Lb^N
\colon \bet \in C^1_c(\Omega, \mathbb R^N),\ |\bet |^*_\varphi\leq 1
\right\}$$
where $|\cdot |^*_\varphi$ denotes the dual norm associated with $|\cdot
|_\varphi$. This is a proper, lower semicontinuous functional on $L^2(\Omega)$
with values in $[0, \infty]$. We have $TV_{\varphi, \Omega}(u) < + \infty$ iff
$u \in BV(\Omega)$, in which case we use notation $|Du|_\varphi(\Omega) =
TV_{\varphi, \Omega}(u)$. This is an equivalent seminorm on $BV(\Omega)$.

In the analysis of differential equations associated with the functional
$TV_{\varphi, \Omega}$, a crucial role is played by the following result
characterizing the subdifferential of $TV_{\varphi,\Omega}$, whose proof can 
easily be obtained by adapting that of \cite[Theorem 12.]{moll}.

\begin{thm}\label{ch_subd}
 Let $\Omega$ be a bounded Lipschitz domain and let $w \in \D(TV_{\varphi,
\Omega}) = BV(\Omega)$. There holds $v \in -
\partial TV_{\varphi,\Omega}(w)$ iff $v \in L^2(\Omega)$ and there exists $\bxi
\in L^\infty(\Omega, \mathbb R^N)$ such that $v = \div \bxi$ and
\begin{equation}
 \label{domain}
-\int_{\Omega} w \, \div \bxi = \int_\Omega |Dw|_\varphi, \quad |\bxi
|^*_\varphi \leq 1 , \quad
\left[\bxi,\norm^\Omega \right]=0.
\end{equation}
\end{thm}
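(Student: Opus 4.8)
The statement is a characterization of the subdifferential of the anisotropic total variation, and the natural route is to adapt the argument of \cite[Theorem 12]{moll} to general norms $|\cdot|_\varphi$, using the Anzellotti pairing theory recalled above. The plan is to prove the two inclusions separately. For the easy inclusion ($\Leftarrow$), suppose $v=\div\bxi$ with $\bxi$ as in \eqref{domain}. Given any $\tilde w\in BV(\Omega)$, I want to show $TV_{\varphi,\Omega}(\tilde w)\ge TV_{\varphi,\Omega}(w)+\int_\Omega v\,(\tilde w-w)\dd\Lb^N$, i.\,e.\;$-v\in\partial TV_{\varphi,\Omega}(w)$. Using the pairing $(\bxi,D\tilde w)$ and its integration-by-parts formula (Green's formula from \cite{anzellotti1}, namely $\int_\Omega \tilde w\,\div\bxi + (\bxi,D\tilde w)(\Omega) = \int_{\partial\Omega}[\bxi,\norm^\Omega]\tilde w\dd\Hd^1$), together with the vanishing weak trace $[\bxi,\norm^\Omega]=0$, one gets $-\int_\Omega \tilde w\,\div\bxi = (\bxi,D\tilde w)(\Omega)$. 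By \eqref{assDw} and the pointwise bound $|\cdot|^*_\varphi(\bxi)\le 1$ a.\,e., one has $(\bxi,D\tilde w)(\Omega)\le \int_\Omega|D\tilde w|_\varphi = TV_{\varphi,\Omega}(\tilde w)$; here one uses that the Radon–Nikodym density $\theta(\bxi,D\tilde w,\cdot)$ is dominated by $|\cdot|_\varphi$ of the density of $D\tilde w$, which is the anisotropic refinement of \eqref{assDw} and follows from Cauchy–Schwarz for the dual norm pair. Combining with the equality case $-\int_\Omega w\,\div\bxi = \int_\Omega|Dw|_\varphi$ gives the subgradient inequality.

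For the harder inclusion ($\Rightarrow$), suppose $v\in -\partial TV_{\varphi,\Omega}(w)$, so in particular $v\in L^2(\Omega)$ and the subgradient inequality holds. First one identifies $TV_{\varphi,\Omega}$ with a support-function-type functional and applies convex-analytic duality: since $TV_{\varphi,\Omega}$ is the support function of the closed convex set $K_\varphi=\overline{\{\div\bet\colon \bet\in C^1_c(\Omega,\mathbb R^N),\ |\cdot|^*_\varphi(\bet)\le 1\}}$ in $L^2(\Omega)$ (this is where lower semicontinuity and the definition of $TV_{\varphi,\Omega}$ enter), the condition $v\in-\partial TV_{\varphi,\Omega}(w)$ forces $v\in K_\varphi$ and $\langle v,w\rangle = TV_{\varphi,\Omega}(w)$, i.\,e.\;$v$ is a point of $K_\varphi$ where the linear functional $w$ is maximized. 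So there is a sequence $\bet_n\in C^1_c(\Omega,\mathbb R^N)$ with $|\cdot|^*_\varphi(\bet_n)\le 1$ and $\div\bet_n\to v$ in $L^2(\Omega)$. The task is to upgrade this to a single $\bxi\in L^\infty(\Omega,\mathbb R^N)$ with $\div\bxi = v$, $|\cdot|^*_\varphi(\bxi)\le 1$ a.\,e., and vanishing weak normal trace. Since $|\cdot|^*_\varphi(\bet_n)\le 1$ bounds $\|\bet_n\|_{L^\infty}$ uniformly (norms on $\mathbb R^N$ are equivalent), one extracts a weak-$*$ limit $\bxi\in L^\infty(\Omega,\mathbb R^N)$; weak-$*$ lower semicontinuity / convexity of the constraint set $\{|\cdot|^*_\varphi(\cdot)\le 1\}$ gives $|\cdot|^*_\varphi(\bxi)\le 1$ a.\,e., and $\div\bet_n\to v$ in $L^2$ forces $\div\bxi = v$ distributionally, hence $\bxi\in X_\Omega$. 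Because each $\bet_n$ has compact support, its weak normal trace vanishes, and one needs the weak-$*$ convergence to pass this to $[\bxi,\norm^\Omega]=0$ — this requires a continuity property of the trace operator with respect to the relevant convergence (convergence in $X_\Omega$, i.\,e.\;weak-$*$ in $L^\infty$ together with $L^2$-convergence of divergences), which is available from \cite{anzellotti1,chen-frid}. Finally, the equality $-\int_\Omega w\,\div\bxi = \int_\Omega|Dw|_\varphi$: from $\langle v,w\rangle = TV_{\varphi,\Omega}(w)$ and Green's formula (using the vanishing trace just established), $-\int_\Omega w\,\div\bxi = (\bxi,Dw)(\Omega)$, while the reverse inequality $(\bxi,Dw)(\Omega)\le \int_\Omega|Dw|_\varphi$ always holds; equality of the two ends forces the middle equality.

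\textbf{Main obstacle.} The delicate point is the construction of the bounded vector field $\bxi$ attaining the divergence $v$ \emph{with the correct boundary behaviour}, i.\,e.\;turning the abstract membership $v\in K_\varphi$ (an $L^2$-closure of divergences of compactly supported test fields) into a pointwise-bounded field with vanishing weak normal trace. Weak-$*$ compactness in $L^\infty$ gives a candidate $\bxi$ easily, but one must check that (i) the constraint $|\cdot|^*_\varphi(\bxi)\le1$ survives the limit a.\,e.\;— fine by convexity — and, more subtly, (ii) the vanishing normal trace survives. The latter is where one leans on the precise functional-analytic properties of the Anzellotti trace operator and the density of smooth fields; in the smooth bounded domain case this mirrors \cite{moll}, and the anisotropic modification is essentially cosmetic since $|\cdot|_\varphi$ and $|\cdot|^*_\varphi$ are comparable to the Euclidean norm. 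A secondary technical point is justifying the anisotropic refinement of \eqref{assDw}, namely $\theta(\bxi,Dw,\bx)\le |\cdot|_\varphi\!\big(\tfrac{Dw}{|Dw|}(\bx)\big)$ $|Dw|$-a.\,e.; this follows from \eqref{anz.ab.c} on the absolutely continuous part and a blow-up / localization argument on the singular part, exactly as in \cite{amarbellettini,moll}.
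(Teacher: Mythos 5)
The paper does not actually prove Theorem \ref{ch_subd}: it is stated without proof as ``an expected generalization of [Theorem 12, moll]'', so the only fair comparison is with that citation and with the standard arguments it points to. Your reconstruction is sound and follows the classical route: for the ``if'' part, Anzellotti's Green formula together with the vanishing weak normal trace and the anisotropic bound $(\bxi,D\tilde w)(\Omega)\le\int_\Omega|D\tilde w|_\varphi$ (valid when $|\cdot|^*_\varphi(\bxi)\le1$ a.\,e.; on the singular part this is obtained by mollifying $\bxi$, noting that the constraint is preserved by convexity, and passing to the limit in the pairing, exactly the localization you indicate and which is in \cite{amarbellettini,moll}); for the ``only if'' part, the observation that $TV_{\varphi,\Omega}$ is the support function of the $L^2$-closure $K_\varphi$ of $\{\div\bet:\bet\in C^1_c,\ |\cdot|^*_\varphi(\bet)\le1\}$, the standard identity $\partial\sigma_{K}(w)=\{p\in K:\langle p,w\rangle=\sigma_K(w)\}$, weak-$*$ compactness in $L^\infty$ to produce $\bxi$, and stability of the constraint and of the weak normal trace under weak-$*$ convergence of the fields plus $L^2$-convergence of the divergences (the trace stability is in fact immediate from the defining Green identity, since both terms on its right-hand side pass to the limit for every test function smooth up to the boundary). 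A genuine advantage of your route over simply invoking \cite{moll} is that it only uses Anzellotti's theory on bounded Lipschitz domains and general norms, which is precisely the setting (rectilinear polygons, $|\cdot|_1$) the paper needs and the reason the theorem is only ``expected'' there.

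Two small corrections you should make in a write-up. First, a consistent sign slip: $v\in-\partial TV_{\varphi,\Omega}(w)$ means $-v\in\partial TV_{\varphi,\Omega}(w)$, so the subgradient inequality is $TV_{\varphi,\Omega}(\tilde w)\ge TV_{\varphi,\Omega}(w)-\int_\Omega v\,(\tilde w-w)\dd\Lb^N$, and the duality step yields $-v\in K_\varphi$ (equivalently $v\in K_\varphi$, since $K_\varphi$ is symmetric) together with $\langle -v,w\rangle=TV_{\varphi,\Omega}(w)$, not $\langle v,w\rangle=TV_{\varphi,\Omega}(w)$; with this fixed, the equality $-\int_\Omega w\,\div\bxi=\int_\Omega|Dw|_\varphi$ is exactly the duality equality and needs no further appeal to Green's formula. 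Second, the subgradient inequality must be checked for all $\tilde w\in L^2(\Omega)$, which is harmless because $TV_{\varphi,\Omega}(\tilde w)=+\infty$ off $BV(\Omega)$. With these cosmetic repairs the plan is complete.
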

We denote by $X_{\varphi, \Omega}(w)$ the set of $\bxi \in L^\infty(\Omega, 
\mathbb R^N)$ with $\div \bxi \in L^2(\Omega)$ satisfying  \eqref{domain}.

In the present paper, we are concerned with the case $\varphi = |\cdot|_1$. 
Hence, we have
\[|Du|_1(\Omega)=\int_\Omega |\nabla u|_1 \ dx+\int_\Omega
\left|\frac{Du}{|Du|}\right|_1 d|D^s u|\]
for each $u \in BV(\Omega)$. 

Given a set of finite perimeter $E$ in $\Omega$ (resp.\;in $\mathbb R^N$) we
denote
 ${\rm Per}_1(E, \Omega) = |D \chi_E|_1(\Omega)$ and ${\rm Per}_1(E) = {\rm 
Per}_1(E,\mathbb R^N)$. If $E$ has finite perimeter in $\Omega$, then
\begin{equation}
 \label{perimeterinteral}
 {\rm Per}_1(E, \Omega) = \int_{\partial^* E \cap \Omega} |\norm^E|_1 \dd \Hd^1.
\end{equation}
If $\partial E$ is Lipschitz, we can drop the star in $\partial^*E$, and
$\norm^E$ is the pointwise \mbox{$\Hd^1$-a.\,e.}\;defined outer Euclidean
normal to $E$. Observe that, in the particular case that $F$ is a rectilinear 
polygon, 
\[{\rm Per}_1(F,\Omega)={\rm Per}(F,\Omega).\]

Given $\lambda > 0$, a rectangle $\Omega$ and $u_0 \in BV(\Omega)$ we  consider
the minimization problem \eqref{1-min}. The problem has a unique
solution, which is also the unique solution to the Euler-Lagrange equation
\begin{equation}
 \label{el}
u =u_0 + \lambda \div \z, \qquad \z \in X_{1, \Omega}(u).
 \end{equation}

The following result is an easy corollary of Theorem \ref{ch_subd} for the case 
of $PCR$ functions.
\begin{lemma}
  \label{charac_subdiff} Let $\Omega$ be a rectangle and $w\in PCR(\Omega)$.
Then, $X_{1, \Omega}(w)$ consists of vector fields $\bxi\in
L^\infty(\Omega,\mathbb R^2)$ such that, for any $Q\in \mathcal Q(w)$,
$\bxi|_{Q}\in X_{Q}$  satisfies
 \begin{equation}
 \label{domain1}
\left. [\bxi
,\norm^{Q}]\right|_{\partial Q^\pm} = \mp 1, \qquad|\bxi|_\infty \leq 1 \qquad 
\left.
[\bxi,\norm^{Q}]\right|_{\partial Q\cap \partial \Omega} = 0.
 \end{equation}
 Furthermore, this set is non-empty.
\end{lemma}
\begin{proof}
It is easy to check that with any $\bxi$ satisfying \eqref{domain1},
\eqref{domain} holds. On the other hand, suppose that $\bxi \in X_{1, \Omega}$.
Then, integrating by parts in each $Q \in \Q_w$ on the l.\,h.\,s.\;of the first
item in \eqref{domain} and noting that $\norm^Q|_{\partial Q^\pm} =
\mp \frac{Dw}{|Dw|}$ we get
\[ \int_{J_w} \left[\bxi, \tfrac{Dw}{|Dw|}\right] (w^+ - w^-) \dd \Hd^1 =
\int_{J_w} (w^+ - w^-) \dd \Hd^1. \]
Together with the condition $|\bxi|_\infty \leq 1$ and \eqref{linftytrace} this
implies the first item in \eqref{domain1}.

One way to point out a field in $X_{1,\Omega}(w)$ is to extend it
from $J_w = \bigcup_{Q \in \Q_w} \partial Q$, where one of its components is 
fixed by
\eqref{domain1}, by component-wise linear interpolation.
\end{proof}

\subsection{Anisotropic total variation flows}
\label{sec:atvf}
Another class of natural differential equations associated with functional
$TV_{\varphi,\Omega}$ are anisotropic total variation flows that formally
correspond to Neumann problems
\begin{equation}\label{atv}
\left\{\begin{array}{ll}
 u_t = \div \partial |\cdot|_\varphi(\nabla u) & \text{in } \Omega, \\ 
\left[\partial |\cdot |_\varphi(\nabla u),\norm^\Omega\right]=0 & \text{on } 
\partial \Omega \end{array}\right.
\end{equation} with $\norm^\Omega$ denoting the outer unit normal to $\partial 
\Omega$. In our case, $\varphi=1$. Let us recall the notion of \emph{(strong)
solution} to a general $\varphi$-anisotropic total variation flow, which is an
adaptation of \cite[Definition 4.]{moll} for a bounded Lipschitz domain
$\Omega$.

\begin{defn}
\label{defatvf}
Let $0 \leq T_0 < T_* \leq \infty$. A function $u \in C([T_0,T_*[, 
L^2(\Omega))$ 
is called a strong solution to \eqref{atv} in $[T_0, T_*[$ if $u_t \in 
L^2_{loc}(]T_0, T_*[, L^2(\Omega))$, $u \in L^1_w(]T_0, T_*[, BV(\Omega))$ and 
there exists $\z \in L^\infty(]T_0, T_*[\times\Omega, \RR)$ such that
\begin{equation}
 \label{zeqn}
 u_t = \div \z \quad \text{in } \mathcal D'(]T_0, T_*[\times \Omega ),
\end{equation}
\begin{equation}
 \label{zselect}
 |\z |_{\varphi}^*\leq 1 \quad \text{a.\,e.\;in } ]T_0, T_*[ \times  \Omega,
\end{equation}
\begin{equation}
  [\z(t),\norm^\Omega]=0 \text{ and }
\end{equation}
\begin{equation}
 \label{zint}
 - \int_{\Omega} u\, \div \z \dd \Lb^N = \int_{\Omega} |Du(t,\cdot)|_\varphi
\quad \text{for a.\,e. }t \in ]T_0, T_*[.
\end{equation}
\end{defn}

It can be proved as in \cite[Theorem 11.]{moll} that, given any $u_0 \in
L^2(\Omega)$ and $0\leq T_0<T_*\leq \infty$, there exists a unique strong 
solution $u$ to \eqref{atv} in $[T_0, T_*[$ with $u(T_0,\cdot) = u_0$. Clearly, 
if $0\leq T_0 <T_1<T_2\leq \infty$ and
\[u\in C([T_0,T_2[, L^2(\Omega)) \cap 
L^1_w(]T_0, T_*[, BV(\Omega)), \quad u_t \in L^2_{loc}(]T_0, T_*[, 
L^2(\Omega))\]
is such that $u|_{[T_0, T_1[\times \Omega}$ a strong solution to \eqref{atv} in 
$[T_0, T_1[$ and $u|_{[T_1, T_2[\times \Omega}$ a strong solution to 
\eqref{atv} in $[T_1, T_2[$ then $u$ is a strong solution to \eqref{atv} in 
$[T_0, T_2[$.

In fact, this existential result is a characterization of the Crandall-Ligett
semigroup generated by the negative subdifferential of $TV_{\varphi,\Omega}$. In
the present paper we are concerned with evolution of regular (with respect to
the operator $- \partial TV_{\varphi,\Omega}$) initial data. In such case,
semigroup theory yields following result \cite[Chapter III]{barbu}.
\begin{thm}
\label{divz}
 Let $u_0 \in \D(\partial TV_{\varphi,\Omega})$ and let $u$ be the strong
solution to
\eqref{atv} in $[0, \infty[$ starting with $u_0$. Then, every $\z \in 
L^\infty(]0, \infty[\times\Omega, \mathbb R^N)$ satisfying 
(\ref{zeqn}-\ref{zint}) has a representative (denoted henceforth $\z$) such that
\begin{itemize}
 \item[(1)] in every $t \in [0, \infty[$, $\z(t, \cdot)$ minimizes
 \[\F_\Omega(\bxi) = \int_\Omega (\div \bxi)^2 \dd \Lb^N\]
 in $X_{\varphi, \Omega}(u(t,\cdot))$ and this condition uniquely defines $\div 
\z(t, \cdot)$,
 \item[(2)] the function
\[[0, \infty[ \ni t \mapsto \div \z(t, \cdot) \in L^2(\Omega) \mbox{ \ \ is 
right-continuous,} \]
 \item[(3)] the function
\[[0, \infty[ \ni t \mapsto \|\div \z(t, \cdot)\|_{L^2(\Omega)}  \mbox{ \ \ is 
non-increasing,} \]
 \item[(4)] the function $[0, \infty[ \ni t \mapsto
u(t,\cdot) \in L^2(\Omega)$ is right-differentiable and
\[ \frac{\dd}{\dd t}^+ u(t,\cdot) = \div \z(t, \cdot) \mbox{ \ \ in every $t \in 
[0, \infty[$.} \]
\end{itemize}
\end{thm}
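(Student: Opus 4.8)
The plan is to deduce the statement from the abstract theory of gradient flows generated by subdifferentials in Hilbert space, combined with the characterization of $-\partial TV_{\varphi,\Omega}$ in Theorem \ref{ch_subd}; the only genuinely new work is the selection of a good representative of $\z$ in the time variable.

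First I would record the abstract facts. Since $TV_{\varphi,\Omega}$ is proper, convex and lower semicontinuous on the Hilbert space $L^2(\Omega)$, the operator $A := \partial TV_{\varphi,\Omega}$ is maximal monotone, and as $u_0 \in \D(\partial TV_{\varphi,\Omega})$ the theory of \cite[Chapter III]{barbu} applies to the strong solution $u$ of \eqref{atv}: one has $u(t) \in \D(\partial TV_{\varphi,\Omega})$ for all $t\geq 0$, the curve $t\mapsto u(t)$ is Lipschitz and right-differentiable everywhere with $\frac{\dd}{\dd t}^+ u(t) = -A^\circ u(t)$, where $A^\circ u(t)$ is the element of minimal $L^2$-norm of the closed convex nonempty set $\partial TV_{\varphi,\Omega}(u(t))$; moreover $t\mapsto A^\circ u(t)$ is right-continuous and $t\mapsto \|A^\circ u(t)\|_{L^2(\Omega)}$ is non-increasing. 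Write $v(t) := -A^\circ u(t)$. By Theorem \ref{ch_subd}, $-\partial TV_{\varphi,\Omega}(u(t)) = \{\div\bxi : \bxi\in X_{\varphi,\Omega}(u(t))\}$, so $v(t)$ is the divergence of least $L^2$-norm attainable by a field in $X_{\varphi,\Omega}(u(t))$; in particular $v(t)=\div\bxi$ for some $\bxi \in X_{\varphi,\Omega}(u(t))$, and such a $\bxi$ minimizes $\F_\Omega$ over $X_{\varphi,\Omega}(u(t))$, with the minimal value of the divergence uniquely determined (if $\bxi_1,\bxi_2$ are two minimizers then $\frac12(\bxi_1+\bxi_2)\in X_{\varphi,\Omega}(u(t))$ and strict convexity of $\|\cdot\|_{L^2}^2$ forces $\div\bxi_1=\div\bxi_2$). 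Granting that $\div\z(t,\cdot)=v(t)$ for every $t$, this is exactly (1), while (2)--(4) become verbatim restatements of the recalled abstract properties of $v(\cdot)$ and $\frac{\dd}{\dd t}^+u(\cdot)$.

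The remaining task — and the main obstacle — is to show that any $\z$ satisfying \eqref{zeqn}--\eqref{zint} admits a representative with $\div\z(t,\cdot) = v(t)$ for \emph{every} $t$, not merely a.\,e. Here I would argue as follows. By Fubini's theorem, for a.\,e.\;$t$ the slice $\z(t,\cdot)$ lies in $L^\infty(\Omega,\RR)$, satisfies $|\z(t,\cdot)|_\varphi^*\leq 1$ a.\,e.\;in $\Omega$ and $[\z(t,\cdot),\norm^\Omega]=0$; since $\div\z = u_t \in L^2_{loc}(]0,\infty[\times\Omega)$ we also have $\div\z(t,\cdot)\in L^2(\Omega)$ for a.\,e.\;$t$, and the Gauss--Green formula for Anzellotti pairings converts \eqref{zint} into $-\int_\Omega u(t)\,\div\z(t,\cdot)\dd\Lb^2 = \int_\Omega|Du(t,\cdot)|_\varphi$. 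Thus Theorem \ref{ch_subd} gives $\div\z(t,\cdot)\in -\partial TV_{\varphi,\Omega}(u(t))$ and $\z(t,\cdot)\in X_{\varphi,\Omega}(u(t))$ for a.\,e.\;$t$. On the other hand, from $u\in W^{1,2}_{loc}(]0,\infty[,L^2(\Omega))$ and $u_t=\div\z$ in $\D'$, the strong $L^2$-derivative $u'(t)$ exists for a.\,e.\;$t$, equals $\div\z(t,\cdot)$, and, being an ordinary derivative, coincides there with the right-derivative $\frac{\dd}{\dd t}^+u(t)=v(t)$. Hence $\div\z(t,\cdot)=v(t)$ off a Lebesgue-null set $N\subset\,]0,\infty[$. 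For $t\in N$, I would use $u(t)\in\D(\partial TV_{\varphi,\Omega})$ and Theorem \ref{ch_subd} to pick $\bxi_t\in X_{\varphi,\Omega}(u(t))$ with $\div\bxi_t=v(t)$; equivalence of norms on $\RR$ together with $|\bxi_t|_\varphi^*\leq1$ gives $\|\bxi_t\|_{L^\infty(\Omega)}\leq C$ uniformly in $t$, and redefining $\z(t,\cdot):=\bxi_t$ on $N$ leaves $\z$ unchanged as an element of $L^\infty(]0,\infty[\times\Omega,\RR)$, hence preserves \eqref{zeqn} (a distributional identity), \eqref{zselect} and the boundary condition (statements holding a.\,e.\;on the product), and \eqref{zint} (a statement holding for a.\,e.\;$t$), while now achieving $\div\z(t,\cdot)=v(t)$ for every $t\in[0,\infty[$.

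With this representative fixed, (1) holds because $\F_\Omega(\z(t,\cdot))=\|v(t)\|_{L^2}^2$ equals the infimum of $\F_\Omega$ over $X_{\varphi,\Omega}(u(t))$, and the divergence of a minimizer is unique as noted; (2), (3), (4) are the right-continuity of $t\mapsto v(t)$, the monotonicity of $t\mapsto\|v(t)\|_{L^2}$, and the right-differentiability identity $\frac{\dd}{\dd t}^+u(t)=v(t)=\div\z(t,\cdot)$, all taken from \cite[Chapter III]{barbu}. I expect the delicate points to be precisely this patching argument — checking that fixing the time-representative does not spoil any of the conditions in Definition \ref{defatvf} — and, at a more routine level, the a.\,e.-in-$t$ slicewise membership $\z(t,\cdot)\in X_{\varphi,\Omega}(u(t))$ obtained via Fubini and the Anzellotti Gauss--Green formula; everything else is a direct transcription of the abstract semigroup theory through Theorem \ref{ch_subd}.
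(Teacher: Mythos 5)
Your argument is correct and is essentially the paper's own: the paper gives no written proof of Theorem \ref{divz}, merely invoking the Brezis--Barbu theory of semigroups generated by subdifferentials of convex l.s.c.\ functionals \cite[Chapter III]{barbu} together with the identification of $-\partial TV_{\varphi,\Omega}$ in Theorem \ref{ch_subd}, which is exactly the route you take. Your additional care in selecting the time-representative of $\z$ (Fubini slicing, matching the a.e.\ strong derivative with the everywhere-defined right derivative $-A^\circ u(t)$, and patching on the null set without disturbing \eqref{zeqn}--\eqref{zint}) supplies details the paper leaves implicit, and they check out.
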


\section{Cheeger problems in rectilinear geometry}\label{auxiliary}

Let $F_0$ be a rectilinear polygon, let $f \in PCR(F_0)$ and let $(\partial
F_0^+, \partial F_0^-)$ be a signature for $\partial F_0$. We denote $G =
G_f \cup G(\partial F_0^+) \cup G(\partial F_0^-)$. 

We introduce a functional $\J_{F_0, \partial F_0^+, \partial F_0^-, f}$
with values in $]-\infty,
+\infty]$ defined on subsets of $F_0$ of positive area given by
\[\J_{F_0, \partial F_0^+, \partial F_0^-, f}(E) = \frac{{\rm Per}_1(E,
\interior F_0) + \Hd^1(\partial^*E \cap \partial F_0^+) - \Hd^1(\partial^*E
\cap \partial F_0^-) - \int_E f \dd \Lb^2}{\Lb^2(E)}\,, \]
if $E$ has finite perimeter and $\J_{F_0, \partial F_0^+, \partial F_0^-,f}(E) = 
+
\infty$ otherwise.
Note that for each measurable $E \subset F_0$ of positive area and finite
perimeter, we have
\[\J_{F_0, \partial F_0^+, \partial F_0^-, f}(E) =
\frac{{\rm Per}_1(E)- \Hd^1(\partial^*E \cap \partial F_0 \setminus
\partial F_0^+) - \Hd^1(\partial^*E \cap \partial F_0^-) - \int_E f \dd
\Lb^2}{\Lb^2(E)}. \]

\begin{lemma}
 \label{rectapprox}
 Let $E \subset F_0$ be a set of finite perimeter with $\Lb^2(E)>0$. Then for 
every $\varepsilon > 0$ there exists a rectilinear polygon $F \in \F(G)$ such
that \[\J_{F_0, \partial F_0^+, \partial F_0^-, f}(F) < \J_{F_0, \partial
F_0^+, \partial F_0^-, f}(E) + \varepsilon.\]
\end{lemma}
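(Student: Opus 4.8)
The plan is to approximate the set $E$ of finite perimeter by a rectilinear polygon subordinate to the grid $G$ without increasing the value of the Cheeger-type functional by more than $\varepsilon$. The key point is that all four ingredients of $\J_{F_0, \partial F_0^+, \partial F_0^-, f}$ — namely $\Lb^2(E)$, $P_1(E, \interior F_0)$, the boundary-overlap terms $\Hd^1(\partial^* E \cap \partial F_0^\pm)$, and $\int_E f \dd \Lb^2$ — behave well under a suitable approximation. Since $f$ is piecewise constant on the rectangles of $\R(G)$, the term $\int_E f \dd \Lb^2$ depends only on the Lebesgue measures $\Lb^2(E \cap R)$ for $R \in \R(G)$, so controlling $\Lb^2$ on each grid rectangle suffices to control it. Thus the heart of the matter is to approximate $E$ by a rectilinear polygon $F \in \F(G)$ so that $\Lb^2(F) \geq \Lb^2(E) - \delta$, $\Lb^2(F \cap R)$ is close to $\Lb^2(E \cap R)$ for each $R \in \R(G)$, and the (anisotropic) perimeter-type numerator does not increase by more than a small amount.

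First I would reduce to a purely grid-rectangle problem. Refine the grid $G$ to an arbitrarily fine grid $G_k \supset G$ (adding equally spaced horizontal and vertical lines with spacing $1/k$), so that $\R(G_k)$ consists of tiny rectangles each contained in a single $R \in \R(G)$ and in $F_0$ or its complement. For each cell $C \in \R(G_k)$ with $C \subset F_0$, include $C$ in the approximating polygon $F_k$ if $\Lb^2(E \cap C) \geq \tfrac12 \Lb^2(C)$ — i.e.\ take the union of cells of density at least $\tfrac12$. Then $F_k \in \F(G_k)$, and as $k \to \infty$ we have $\chi_{F_k} \to \chi_E$ in $L^1$ by the Lebesgue density theorem (since $\Hd^1(\partial^* E) < \infty$, almost every point of $\RR$ has density $0$ or $1$ with respect to $E$, and the cells around such points get classified correctly for large $k$). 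This gives $\Lb^2(F_k) \to \Lb^2(E)$ and $\Lb^2(F_k \cap R) \to \Lb^2(E \cap R)$ for each $R \in \R(G)$, hence $\int_{F_k} f \to \int_E f$ and $\Lb^2(F_k) > 0$ for $k$ large. The main obstacle, and the step needing care, is the perimeter: one must show $\limsup_k \big(P_1(F_k, \interior F_0) + \Hd^1(\partial^* F_k \cap \partial F_0^+) - \Hd^1(\partial^* F_k \cap \partial F_0^-)\big) \leq P_1(E, \interior F_0) + \Hd^1(\partial^* E \cap \partial F_0^+) - \Hd^1(\partial^* E \cap \partial F_0^-)$. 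This does \emph{not} follow from lower semicontinuity of perimeter (which gives the wrong inequality). Instead I would use the standard fact that for a set of finite perimeter, the "half-density" majority-vote approximation on a fine grid has anisotropic perimeter converging (not merely lower-semicontinuously) to that of $E$: this is a known coarea/slicing estimate — for $l^1$ anisotropy, $P_1(F_k, \interior F_0)$ equals the total length of the horizontal plus vertical edges of $F_k$ inside $F_0$, and by slicing in the $x_1$- and $x_2$-directions one bounds the number of sign changes of $\chi_{F_k}$ along a.e.\ line by the number of essential boundary crossings of $E$, giving $\limsup_k P_1(F_k, \interior F_0) \leq P_1(E, \interior F_0)$ (one may even prove equality, but $\limsup$ suffices). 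The boundary terms on $\partial F_0^\pm$ are handled similarly by slicing arguments near $\partial F_0$, using that $\partial F_0^\pm$ are unions of segments of $\I(G)$ and that $\chi_{F_k}$ restricted to cells adjacent to $\partial F_0$ converges in $L^1$ to $\chi_E$.

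Combining the above, for $k$ large enough the value $\J_{F_0, \partial F_0^+, \partial F_0^-, f}(F_k)$ is within $\varepsilon/2$ of $\J_{F_0, \partial F_0^+, \partial F_0^-, f}(E)$. This produces a rectilinear polygon $F_k \in \F(G_k)$ rather than $\F(G)$; to conclude I would note that the statement as used in the paper (see the discussion preceding Theorem \ref{cheeger} about "a finite class of rectilinear polygons subordinate to a grid given by the datum") is really what the proof of Theorem \ref{cheeger} needs, but if one insists on $F \in \F(G)$ exactly, then — since the functional is invariant under refining the grid as long as $f$ stays piecewise constant, but $G(f)$ is fixed — one should instead work directly with $\F(G)$. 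In that case I would replace the refinement step by the following: approximate $E$ first in $L^1$ by a polygon $P$ whose sides are axis-parallel (possible since sets of finite perimeter in $\RR$ are $L^1$-approximable by such polygons with convergence of $l^1$-perimeter), then observe that on each rectangle $R \in \R(G)$ the restriction of $P$ can be replaced by either $R$ itself or $\emptyset$ by a further majority-vote on $R$; the key observation making the perimeter not increase is the submodularity/merging inequality $P_1(A \cup B, \interior F_0) + P_1(A \cap B, \interior F_0) \leq P_1(A, \interior F_0) + P_1(B, \interior F_0)$ applied inductively over the rectangles of $\R(G)$, together with the observation that rounding $E \cap R$ to $R$ or $\emptyset$ according to whether $\Lb^2(E \cap R) \gtrless \tfrac12 \Lb^2(R)$ changes all the relevant quantities continuously in the (small) symmetric-difference measure after the initial $L^1$-approximation is taken fine enough. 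Either way, the crux is the same: controlling the perimeter numerator from above under the rounding, which I expect to be the only genuinely delicate point, the volume and $f$-integral terms being routine consequences of $L^1$-convergence.
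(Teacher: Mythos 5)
There is a genuine gap, and it sits exactly at the delicate point you yourself flag. The lemma demands $F\in\F(G)$ for the \emph{fixed, coarse} grid $G=G(f)$ (this finiteness is what the proof of Theorem \ref{cheeger} lives on), so your first construction, which only yields $F_k\in\F(G_k)$ for finer and finer grids $G_k$, does not prove the statement; and your fallback for getting into $\F(G)$ — rounding $E\cap R$ to $R$ or $\emptyset$ on each $R\in\R(G)$ by majority vote — is not correct. The quantities in the numerator are not ``continuous in the symmetric-difference measure'': rounding away (or filling in) a set of small area can change $P_1$ and the terms $\Hd^1(\partial^*\,\cdot\,\cap\partial F_0^\pm)$ by a fixed amount, and since $\J$ is a quotient, the majority-vote direction can be the wrong one. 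Concretely, take $f=0$, $\partial F_0^\pm=\emptyset$, $F_0=[0,2]\times[0,1]$ with $\R(G)=\{R_1,R_2\}$, $R_1=[0,1]^2$, $R_2=[1,2]\times[0,1]$, and $E=[0,1.45]\times[0,1]$. Then $\J(E)=1/1.45\approx 0.69$, but the majority vote keeps $R_1$ and discards $R_2$, giving $\J(R_1)=1>\J(E)+\varepsilon$ for small $\varepsilon$; the admissible choice here is the opposite rounding, $F=F_0$, with $\J(F_0)=0$. The point is that which of the two extreme positions of an edge is admissible cannot be prescribed in advance by a density threshold; it has to be read off from the functional itself. (Your slicing claim in the first variant, that the fine-grid majority vote has $\limsup_k P_1(F_k,\interior F_0)\le P_1(E,\interior F_0)$, is also not justified as stated: a cell-wise vote can create sign changes on a line that the slice of $E$ through that line does not have, so the crossing-count comparison needs an argument.)

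For comparison, the paper avoids both problems by a three-step construction: first mollify and pick a good level (Sard plus the anisotropic coarea formula) to get a smooth set with almost the same $\J$; then ``square'' it by covering $\partial\widetilde E$ with small squares on which $\widetilde E$ is a subgraph and taking unions with the circumscribed rectangles, which increases area and does not increase $P_1$ (a computation with $\int 1+|g'|$); and finally ``align'' each edge to the grid $G$ by sliding it and observing that $\J$, as a function of the position $y$ of a single edge, is a homography, hence monotone, so \emph{one} of the two extreme positions (whichever the monotonicity dictates, possibly against the majority vote, with separate treatment of the discontinuities at the endpoints) does not increase $\J$; finitely many such moves land in $\F(G)$. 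If you want to rescue your approach, you would have to replace the majority-vote rule by exactly this kind of monotonicity selection, at which point you have essentially reconstructed the paper's aligning step.
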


\begin{proof}

Throughout the proof, we write for short $\J = \J_{F_0, \partial
F_0^+, \partial F_0^-, f}$. \\

\emph{Step 1. Smoothing \\}
First, given $\varepsilon>0$, we obtain a smooth closed
set $\widetilde E \subset F$ such that $\J(\widetilde E) \leq \J(E) +
\varepsilon$ and $\widetilde E$ does not contain any vertices of $F_0$. For this 
purpose, we adapt the standard method of
smooth approximation of sets of finite perimeter. Namely, we consider
superlevels of smooth functions $\psi_\delta * \chi_E$, $\delta>0$.
Here, $\psi_\delta$ is a standard smooth approximation of unity. Using
Sard's lemma on regular values of smooth functions and the coarea formula for
anisotropic total variation \cite[Remark 4.4]{amarbellettini}, we obtain,
reasoning as in the proof of \cite[Theorem 1.24]{giustibook}, a number $0 < t <
\frac{1}{2}$ and a sequence $\delta_j \to 0^+$ such that
\[\widetilde E_j=\{\psi_{\delta_j} * \chi_E \geq t\}\]
is a smooth set for each $j =1, 2, \ldots$ and
\begin{multline}
\label{limits}
\Lb^2(\widetilde E_j \triangle E) \to 0, \qquad \liminf_{j \to \infty}
{\rm Per}_1(\widetilde E_j) = {\rm Per}_1(E), \\ \Hd^1((\partial^* E) \setminus
\widetilde E_j) \to 0, \qquad \left|\int_{\widetilde E_j} f \dd \Lb^2 - \int_E f 
\dd \Lb^2\right| \to 0.
\end{multline}
Here and in the following we denoted by $\triangle$ the symmetric 
difference. The first two items in \eqref{limits} are covered explicitly in 
\cite{giustibook}. The last one is clear since $f \in L^\infty(F_0)$. It 
remains to justify the third item. Since $\partial^* E \subset 
\partial^\frac{1}{2} E$, for
each $\bx \in \partial^* E$ there is a natural
number $j_0$ such that for every $j>j_0$ there holds $\bx \in \widetilde E_j$.
Thus, as $\Hd^1(\partial^* E) = {\rm Per}(E)$ is finite, the assertion follows 
by
continuity of measures.

Perturbing each $\widetilde E_j$ a little, we can require that $\partial 
\widetilde E_j$ is transverse 
to every line in $G$. Then, $\partial (F_0 \cap 
\widetilde E_j)$ are
piecewise smooth curves and it is visible that all items in \eqref{limits} 
remain true if we
substitute $F_0 \cap \widetilde E_j$ for $\widetilde E_j$. Therefore, for any
given $\varepsilon '>0$ we  choose a number $j$ such that
\begin{multline}
\label{approx1}
 \Lb^2(F_0 \cap \widetilde E_j) > \Lb^2(E) - \varepsilon ', \qquad {\rm 
Per}_1(F_0 \cap
\widetilde E_j) < {\rm Per}_1(E) + \varepsilon ', \\\Hd^1(\partial (\widetilde 
E_j \cap F)\cap \partial F_0 \setminus \partial F_0^+) > \Hd^1( \partial^*
E \cap \partial F_0  \setminus \partial F_0^+) - \varepsilon'\\
\text{and}\quad\Hd^1(\partial (\widetilde E_j \cap F) \cap \partial F_0^-) > 
\Hd^1( \partial^*
E \cap \partial F_0^-) - \varepsilon'.
\end{multline}
Taking $\varepsilon '$ small enough we obtain
\begin{equation}
\label{approx2}
\J(F_0 \cap \widetilde E_j) < \J(E) +
\varepsilon .
\end{equation}
Due to transversality, there is at most a finite number of points where 
the piecewise smooth curve
$\partial(F_0 \cap \widetilde E_j)$ is not infinitely differentiable. Thus, we
can smooth out the set $F_0 \cap \widetilde E_j$ in such a way that
\eqref{approx1}, and consequently \eqref{approx2}, still hold. We denote the
resulting set by $\widetilde E$. Possibly adjusting $\widetilde E$ slighty, 
we can require that it does not contain any vertices of $F_0$. \\

\emph{Step 2. Squaring \\}
Let now $\varepsilon'' >0$. For each $\bx \in \partial \widetilde E$ there is an 
open square $U(\bx)=I(\bx) \times J(\bx)$ of side smaller than $\varepsilon ''$ 
such that $\widetilde E \cap U(\bx)$ coincides
with subgraph of a smooth function $g \colon I(\bx)\to J(\bx)$ or $g\colon 
J(\bx) \to I(\bx)$ and that $U(\bx)$ intersects at most one edge of $F_0$ 
(contained in the supergraph of $g$).
The family $\{U(\bx) \colon \bx \in \partial \widetilde E\}$ is an open cover of
$\partial \widetilde E$. We extract a finite cover $\{U_1, \ldots, U_l\}$,
$l = l(\varepsilon '')$ out of it. We assume that $\{U_1, \ldots, U_l\}$ is
minimal in the sense that none of its proper subsets covers $\partial \widetilde
E$. Let us take $\widehat E_0 = \widetilde E \cup \bigcup_{i = 1}^l W_i$, where
$W_i \subset F_0$  is the smallest closed rectangle containing
$U_i \cap \widetilde E$. The operation of taking a union of $\widetilde E$ with
$W_1$ increases volume while not increasing $l^1$-perimeter. Indeed, denoting
$U_1 = ]a_1, b_1[ \times ]a_2, b_2[$ and assuming without loss of generality
that $\widetilde E \cap U_1$ coincides with the subgraph of a smooth function
$g_1 \colon ]a_1, b_1[ \to ]a_2, b_2[$, we have
\[\int_{W_1 \cap \partial \widetilde E} |\norm^{\widetilde E} |_1 \dd \Hd^1 = 
\int_{]a_1, b_1[} 1 + |g_1'| \dd \Lb^1 \geq |\sup g_1 - g_1(a_1)| + |b_1 - a_1| 
+ |\sup g_1 - g_1(b_1)|\]
and consequently
\[{\rm Per}_1(\widetilde E) = \Hd^1(\partial \widetilde E \setminus W_1) + 
\int_{W_1
\cap \partial \widetilde E} |\norm^{\widetilde E} |_1 \dd \Hd^1 \geq
{\rm Per}_1(\widetilde E \cup W_1).\]
Similarly, we show that taking the union of $\widetilde E \cup W_1$ with $W_2$
does not increase the perimeter, and so on. Furthermore, clearly $\partial
\widetilde E_0 \cap \partial F_0  \setminus \partial F_0^+ \subset \partial 
\widehat E_0 \cap \partial F_0 
\setminus \partial F_0^+$ and $\partial \widetilde E_0  \cap \partial F_0^-
\subset \partial \widehat E_0  \cap \partial F_0^-$. Summing up, we have
\begin{multline}
\Lb^2(\widehat E_0) \geq \Lb^2(\widetilde E), \qquad \Per_1(\widehat E_0) 
\leq \Per_1(\widetilde E), \\ \Hd^1(\partial \widehat E_0 \cap 
\partial F_0 \setminus \partial F_0^+) \geq \Hd^1( \partial \widetilde E \cap 
\partial F_0 \setminus \partial F_0^+), \qquad \Hd^1(\partial \widehat E_0 \cap 
\partial F_0^-) \geq \Hd^1( \partial \widetilde E \cap \partial F_0^-), \\ 
\left| \int_{\widehat E_0} f \dd \Lb^2 - \int_{\widetilde E} f \dd \Lb^2\right| 
\leq 2 \,\ess \max f \cdot l(\varepsilon '') \cdot (\varepsilon'')^2 .
\end{multline}
No point $\bx \in F_0$ is
contained in more than two of $W_1, \ldots, W_l$, and so
\[ l(\varepsilon '') \cdot (\varepsilon '')^2 \leq 2 \Lb^2(\{\bx \colon 
\dist(\bx, \partial \widetilde E) \leq  \varepsilon '' \}) \to 0 \]
as $\varepsilon ''\to 0$. Thus, fixing small enough $\varepsilon''$,
$\J(\widehat
E_0) < \J(E) + \varepsilon$ holds. \\

\begin{figure}[h!]
    \centering
    \begin{subfigure}{0.23\textwidth}
        \includegraphics[width=\textwidth]{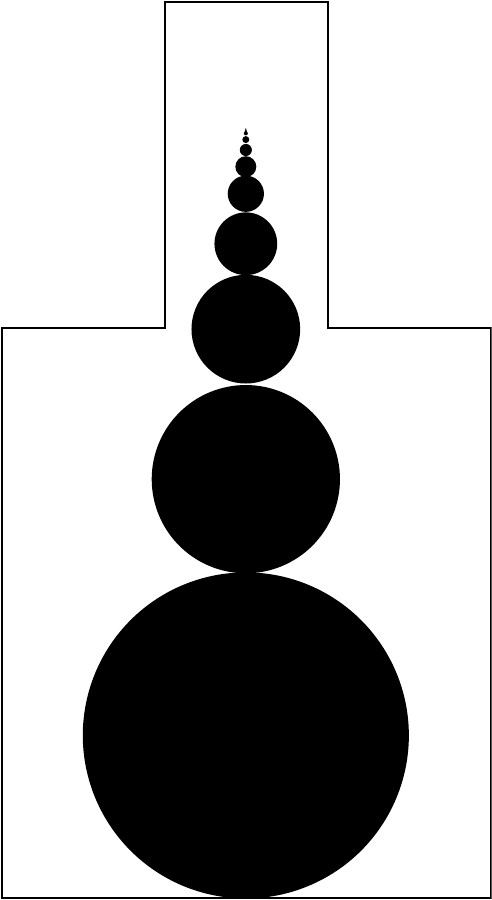}
        \caption{A $BV$ set $E$.}
        \label{fig:bvset}
    \end{subfigure}
    \;
    \begin{subfigure}{0.23\textwidth}
        \includegraphics[width=\textwidth]{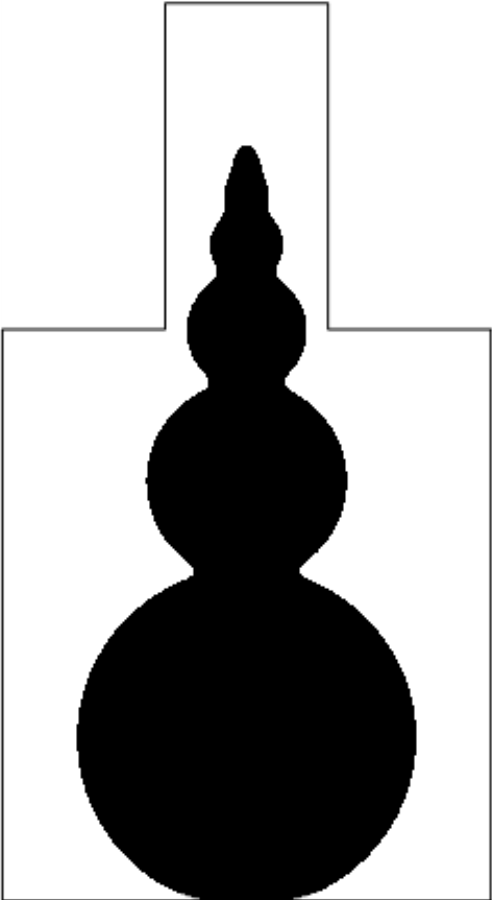}
        \caption{Smoothed set $\widetilde E$.}
        \label{fig:smoothed}
    \end{subfigure}
    \;
    \begin{subfigure}{0.23\textwidth}
        \includegraphics[width=\textwidth]{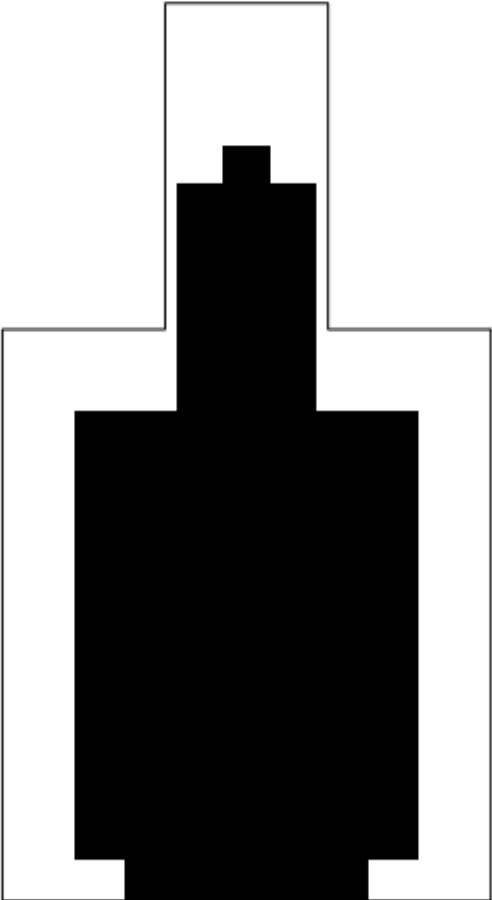}
        \caption{Squared set $\widehat E_0$.}
        \label{fig:squared}
    \end{subfigure}
    \;
        \begin{subfigure}{0.23\textwidth}
        \includegraphics[width=\textwidth]{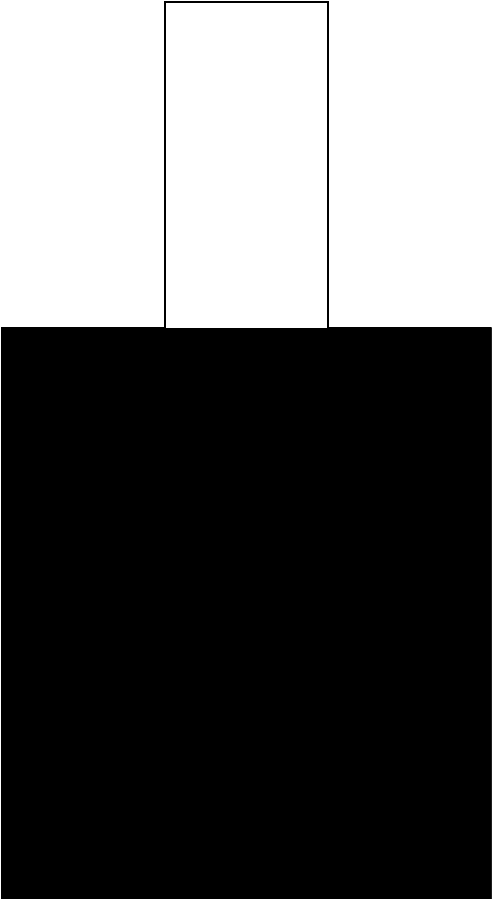}
        \caption{Aligned set $F$.}
        \label{fig:aligned}
    \end{subfigure}
    \caption{The construction in Lemma \ref{rectapprox}
applied to an example set of finite perimeter contained in a
rectilinear polygon (in case $f=0$). }\label{fig:proofscheme}
\end{figure}

\emph{Step 3. Aligning \\}
Take any line $L_0 \subset G(\widehat E_0)$ that is not contained in $G$. We
assume for clarity that $L$ is horizontal, i.\,e.\;$L=\mathbb R \times 
\{y_0\}$, $y_0 \in \mathbb R$. We denote $L_0 \cap \partial \widehat E_0 = 
C_0 \times \{y_0\}$ and observe that $C_0 \subset \mathbb R$ necessarily 
contains 
an interval. 
Let $L_+ = \mathbb R \times \{y_+\}$ and $L_- = \mathbb R \times \{y_-\}$ be 
the lines in $G \cup G(\widehat E_0)$ situated above and below $L_0$ closest 
to $L_0$. We have $C_0 \times [y_-, y_+] \subset F_0$. Let us first 
assume that $\widehat E_0 \neq C_0 \times [y_0, y_+]$, $\widehat E_0 \neq C_0 
\times 
[y_-, y_0]$. For 
$y \in
[y_-, y_+]$, we define
\begin{equation}
\overline \J(y) = \left\{ \begin{array}{ll} 
                           \J(\widehat E_0 \,\triangle\, (C_0 
\times [y_0, y])) & \text{if } y>y_0, \\
\J(\widehat E_0 \,\triangle \,(C_0 
\times [y, y_0])) & \text{otherwise.}
                          \end{array}\right.
\end{equation}
Denoting $\Q_f = \{Q_1, \ldots, Q_n\}$, we observe that $Q_i \cap (C_0 \times 
[y_-, y_+]) 
= C_i \times [y_-, 
y_+]$ with $C_i \subset \mathbb R$ for $i=1, \ldots, n$. This follows 
from the choice of $L_0$ and $L_\pm$. Similarly, each 
one of line segments constituting $\partial C_0 \times [y_-, y_+]$ is 
contained (up to a finite number of points) in $\partial F_+ \cup \interior F$, 
$\partial F_-$ or $\partial F \setminus(\partial F^+ \cup \partial F^-)$.
Therefore, $\overline \J$ is a homography and hence monotone on $]y_-, y_+[$. 

However, $\overline \J$ might be discontinuous at the endpoints of its 
domain. This is only possible if, as $y$ attains $y_+$ (or $y_-$), a pair of 
edges of $\widehat E_0 \,\triangle\, (C_0 \times [y_0, y])$ (resp.\,$\widehat 
E_0 
\,\triangle\, (C_0 \times [y, y_0])$) vanishes, or an edge touches the 
boundary of $F_0$. In either case, there still holds $\lim_{y \to y_\pm} 
\overline \J(y) \geq \overline \J(y_\pm)$. 

Thus, whether
$\overline \J$ is continuous or not, either $\overline \J(y_+) $ or
$\overline \J(y_-)$ (or both) is not larger than $\overline\J(y_0) = 
\J(\widehat E_0)$. In accordance with that, we denote either $\widehat E_1 =
\widehat E_0 \,\triangle\, (C_0 
\times [y_0, y_+])$ or $\widehat E_1 = \widehat E_0 \,\triangle\, (C_0 
\times [y_-, y_0])$
and perform the same argument with $\widehat E_1$ instead of $\widehat E_0$. 

Now, let us go back to the excluded cases and suppose, without loss of 
generality, that $\widehat E_0 = C_0 \times 
[y_0, y_+]$. Then, $\overline \J$ is still a well-defined homography in 
$[y_-, y_+[$ and $\lim_{y \to y_+} \overline \J(y) = + \infty$. Hence, 
$\overline \J(y_-) \leq \overline \J(y_0) = \J(\widehat E_0)$ and we put 
$\widehat E_1 = \widehat E_0 \,\triangle \,(C_0 
\times [y_-, y_0]) = C_0 \times [y_-, y_+]$ and continue the procedure.      

For each $i$, $G(\widehat E_{i+1})$ contains at least one line not 
contained in $G$ less than $G(\widehat E_i)$, so this
procedure terminates in a finite number $s$ of steps and we obtain
$F=\widehat E_s$ whose all edges are contained in $G$ and $\J(F) \leq 
\J(\widehat E_0) < \J(E) + \varepsilon$.
\end{proof}

\begin{thm}
\label{cheeger}
The functional $\J_{F_0, \partial F_0^+, \partial F_0^-,f}$ is bounded from 
below
and is minimized by a rectilinear polygon $F\subset F_0$ such that $F \in
\F(G)$.
\end{thm}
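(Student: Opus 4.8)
The plan is to obtain the theorem as a soft consequence of Lemma~\ref{rectapprox}, using crucially that $\F(G)$ is a \emph{finite} set. Write $\J = \J_{F_0, \partial F_0^+, \partial F_0^-, f}$ and put $\mathcal A = \{F \in \F(G) : F \subset F_0\}$. Since $\F(G)$ is finite, $\mathcal A$ is finite; it is non-empty because $F_0 \in \F(G)$ by hypothesis, so $F_0 \in \mathcal A$. Each $F \in \mathcal A$ is a non-empty union of rectangles of $\R(G)$, hence $\Lb^2(F) > 0$ and $P_1(F, \interior F_0) < \infty$, so $\J(F)$ is a well-defined real number. Therefore $m := \min_{F \in \mathcal A} \J(F)$ is attained, say at $F^\ast \in \mathcal A$, and $m \in \mathbb R$.

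Next I would show that $m$ is a lower bound for $\J$ on all of its domain. Let $E \subset F_0$ have finite perimeter and $\Lb^2(E) > 0$. By Lemma~\ref{rectapprox}, for every $\varepsilon > 0$ there is $F \in \F(G)$ with $F \subset F_0$ (hence $F \in \mathcal A$) and $\J(F) < \J(E) + \varepsilon$, so that $m \leq \J(F) < \J(E) + \varepsilon$; letting $\varepsilon \to 0^+$ yields $\J(E) \geq m$. Thus $\J$ is bounded below by $m > -\infty$, and since $\J(F^\ast) = m \leq \J(E)$ for every admissible $E$, the rectilinear polygon $F^\ast \in \F(G)$, $F^\ast \subset F_0$, minimizes $\J$. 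This is the claim.

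I do not expect a genuine obstacle here: the substantive content is already contained in Lemma~\ref{rectapprox}, and the remaining argument is purely formal. The only points needing (minor) care are that every $F \in \mathcal A$ gives a \emph{finite} value of $\J$ --- which relies on $\Lb^2(F) > 0$, guaranteed since $F$ is a non-empty union of grid rectangles --- and that the polygon produced by Lemma~\ref{rectapprox} is contained in $F_0$, so that $\J$ is meaningfully evaluated at it (this is built into the construction in the lemma, each step of which keeps the competitor inside $F_0$). As stressed in the introduction, finiteness of $\F(G)$ is exactly what legitimizes the passage from ``approximate minimizers among grid polygons'' to ``an actual minimizer''; one could alternatively prove boundedness below directly by a relative-isoperimetric/slicing estimate bounding $\Hd^1(\partial^* E \cap \partial F_0^-)$ by $P_1(E, \interior F_0)$ plus a multiple of $\Lb^2(E)$ in thin collars normal to the edges making up $\partial F_0^-$ (so that the perimeter term in the numerator absorbs the negative boundary term), but this is unnecessary once Lemma~\ref{rectapprox} is available.
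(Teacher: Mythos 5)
Your proposal is correct and follows essentially the same route as the paper: both arguments combine Lemma \ref{rectapprox} with the finiteness of $\{F \in \F(G): F \subset F_0\}$, the paper phrasing it via minimizing sequences and a constant subsequence, you via taking the minimum over the finite family directly as the lower bound. The minor points you flag (positive area of grid polygons, containment of the approximating polygon in $F_0$) are indeed the only details to check and are handled as you say.
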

\begin{proof}
 Suppose that $\J_{F_0, \partial F_0^+, \partial F_0^-,f}(E_n)\to -\infty$. 
Then,
due to Lemma \ref{rectapprox} there exist rectilinear polygons $F_n \subset
F_0$, $n=1,2,\ldots$ such that $F_n \in \F(G)$ and $\J_{F_0, \partial F_0^+,
\partial F_0^-,f}(F_n)\to -\infty$, an impossibility.

 Now, consider any minimizing sequence $(E_n)$ of $\J_{F_0, \partial F_0^+,
\partial F_0^-,f}$. By means of Lemma \ref{rectapprox} we find a
minimizing sequence of rectilinear polygons $F_n \in F_0$ such that $F \in
\F(G)$. As the set of such rectilinear polygons is finite, $(F_n)$ has a
constant subsequence $(F_{n_k}) \equiv (F)$. Clearly, $F$ minimizes $\J_{F_0,
\partial F_0^+, \partial F_0^-,f}$.
\end{proof}

Instead of $\J_{F_0, \partial F_0^+, \partial F_0^-,f}$ we can consider
\[\check \J_{F_0, \partial F_0^+,
\partial F_0^-,f}(E) \colon \{ E \subset F_0 \text{ --- measurable s.\,t. } 
\Lb^2(E)>0\} \to [-\infty,
+\infty[\]
defined by
\[\check \J_{F_0, \partial F_0^+,
\partial F_0^-,f}(E) =
\frac{- {\rm Per}_1(E,
\interior F_0) + \Hd^1(\partial^*E \cap \partial F_0^+) - \Hd^1(\partial^*E
\cap \partial F_0^-)- \int_E f \dd \Lb^2}{\Lb^2(E)}, \]
if $E$ has finite perimeter and $- \infty$ otherwise.
Then, noticing that
\[\check \J_{F_0, \partial F_0^+,
\partial F_0^-,f} = - \J_{F_0, \partial F_0^-,
\partial F_0^+, -f}\]
we obtain analogous versions of Lemma \ref{rectapprox}
and Theorem \ref{cheeger}.

\begin{lemma}\label{rectapproxbis}
 Let $E \subset F_0$ be a set of finite perimeter with $\Lb^2(E)>0$. Then for
every $\varepsilon > 0$ there exists a rectilinear polygon $F \in \F(G)$ such
that
\[\check \J_{F_0, \partial F_0^+, \partial F_0^-,f}(F) > \check\J_{F_0,
\partial F_0^+,
\partial F_0^-,f}(E) - \varepsilon.\]
\end{lemma}

\begin{thm}\label{cheegerbis}
 The functional $\check \J_{F_0, \partial F_0^+, \partial F_0^-,f}$ is bounded 
from above
and is maximized by a rectilinear polygon $F\subset F_0$ such that $F \in
\F(G)$.
\end{thm}

\section{The minimization problem for $TV_1$ with $PCR$ datum}\label{mini}
Let $\Omega$ be a rectilinear polygon and let $u_0$ belong to
$PCR(\Omega)$. Given $\lambda>0$, we use Lemma \ref{cheeger} to
prove that the solution $u$ to the problem \eqref{1-min} is encoded in
the partition $\{F_1,\ldots, F_l\}$ of $\Omega$ (partition into level sets 
of $u$) and consistent signature
$F_k \mapsto (\partial F_k^+, \partial F_k^-)$, $k = 1,\ldots, l$ (signature 
induced by $u$),
both subordinate to $G_{u_0}$, produced by the
following 
algorithm:
\begin{itemize}
 \item First, denote by $F_1$ the largest minimizer
of $\J_{\Omega, \emptyset, \emptyset, \frac{u_0}{\lambda}}$, and put $\partial
F_1^+ = \partial F_1 \setminus \partial \Omega$, $\partial F_1^- = \emptyset$.
 \item At $k$-th step, denote $\check F_k = \bigcup_{i=1}^{k-1} F_i$. If $\Omega
= \check F_k$, stop. Otherwise, denote by $F_k$ the largest minimizer
of $\J_{\Omega \setminus \check F_k, \emptyset,\partial \check F_k,
\frac{u_0}{\lambda}}$ and put $\partial F_k^+ = \partial F_k \setminus (\partial
\Omega \cup \partial \check F_k)$, $\partial F_k^- = \partial F_k \cap \partial
\check F_k$.
\end{itemize}
We take the largest 
minimizer, because we want to construct the whole level set in one turn. As 
Cheeger quotients are subadditive, the largest minimizer is the sum of all 
minimizers. Note that the algorithm ends after a finite number of steps, as 
$\check F_k$ is larger than $\check F_{k-1}$ by $F_k$, a non-empty rectilinear 
polygon
subordinate to $G(u_0)$ and contained in $\Omega$. These collectively make a
finite set.
\begin{remark}
 $\partial
F_k^\pm$ are defined in such a way that each $\J_{F_k, \partial F_k^+, \partial
F_k^-, \frac{u_0}{\lambda}}$ is the restriction of  $\J_{\Omega \setminus
\check F_k, \emptyset, \partial \check F_k,
\frac{u_0}{\lambda}}$ to subsets of $F_k$. In particular, $F_k$ is a minimizer
of
 $\J_{F_k, \partial
F_k^+, \partial F_k^-,
\frac{u_0}{\lambda}}$.
\end{remark}

\begin{thm}
 \label{minimization}
Let $F_k$, $(\partial F_k^+, \partial F_k^-)$, $k=1,\ldots,l$ be as above.
Let $u \in PCR(\Omega)$ be given by
 \begin{equation}
 \label{defu}
u|_{F_k} = \frac{1}{\Lb^2(F_k)} \left(\int_{F_k} u_0 \dd \Lb^2 - 
\lambda(\Hd^1(\partial F_k^+) - \Hd^1(\partial F_k^-)) \right)
 \end{equation}
 for $k=1, \ldots, l$. Then $u$ is the solution to \eqref{1-min}.
Furthermore, $\Q_u = \{F_1, \ldots, F_l\}$, $\S_u(F_k)=(\partial
F_k^+, \partial F_k^-)$, $k=1,\ldots,l$.
\end{thm}
\begin{proof}
 We now adapt the reasoning in the proof of Theorem 5 in \cite{thetvf}. For 
given $k=1,\ldots,l$, we consider the functional $\F_k$ defined
 by
 \[\F_k(\bxi) = \int_{F_k} \left(\div \bxi + \frac{u_0}{\lambda}\right)^2 \dd 
\Lb^2\]
 on the set of vector fields $\bet \in X_{F_k}$ satisfying
 \[|\bet|_\infty \leq 1 \qquad \left. [\bet
,\norm^{F_k}]\right|_{\partial F_k^\pm} = \mp 1, \qquad\left. 
[\bet,\norm^{F_k}]\right|_{\partial F_k \cap \partial \Omega} = 0.\]
 We  first prove that any vector field $\bxi \in L^\infty(\Omega, \RR)$, such
that $\bxi|_{F_k}$ satisfies above conditions for each $k=1, \ldots,l$, belongs
to $X_{1, \Omega}(u)$, with $u$ defined by \eqref{defu}. This follows
immediately by Lemma \ref{charac_subdiff} once we know that $F_k 
\mapsto (\partial F_k^+, \partial F_k^-)$, $k=1, \ldots, l$ is the
signature induced by $u$, i.\,e.\;that the inequality
 \begin{multline}
  \label{inequ}
 \frac{1}{\Lb^2(F_{k+1})} \left(\int_{F_{k+1}} u_0 \dd \Lb^2 - 
\lambda(\Hd^1(\partial F_{k+1} \setminus (\partial \Omega \cup \partial \check 
F_{k+1})) - \Hd^1(\partial F_{k+1} \cap \partial \check F_{k+1})) \right) = 
u|_{F_{k+1}} \\ < u|_{F_k} = \frac{1}{\Lb^2(F_k)} \left(\int_{F_k} u_0 \dd 
\Lb^2 - \lambda(\Hd^1(\partial F_k \setminus (\partial \Omega \cup \partial 
\check F_k)) - \Hd^1(\partial F_k \cap \partial \check F_k)) \right),
 \end{multline}
 is satisfied for $k=1, \ldots, l-1$ with $u$ defined in \eqref{defu}. Note 
that we have
 \begin{multline*}
 \Hd^1(\partial F_{k+1} \cap \partial \check F_k) =  \Hd^1(\partial F_{k+1} 
\cap \partial \check F_{k+1}) - \Hd^1(\partial F_{k+1} \cap \partial F_k) \\ 
\text{and} \quad \Hd^1(\partial F_{k+1} \setminus (\partial \Omega \cup \check 
F_k)) = \Hd^1(\partial F_{k+1} \setminus (\partial \Omega \cup \check F_{k+1})) 
+ \Hd^1(\partial F_{k+1} \cap \partial F_k).
 \end{multline*}
 Thus, were \eqref{inequ} not the case, we would have (using the inequality 
$\frac{x_1+x_2}{y_1+ y_2} \leq \frac{x_1}{y_1}$ that holds whenever 
$\frac{x_2}{y_2} \leq \frac{x_1}{y_1}$ for positive numbers $x_1, x_2, y_1, 
y_2$),
 \begin{multline}
  \J_{\Omega\setminus \check F_k, \emptyset, \partial \check F_k, 
\frac{u_0}{\lambda}} (F_k \cup F_{k+1})  \\ \leq \frac{1}{\Lb^2(F_k)+ 
\Lb^2(F_{k+1})}\left(\Hd^1(\partial F_k \cap \partial \check F_k) - 
\Hd^1(\partial F_k \setminus (\partial \Omega \cup \partial \check F_k)) - 
\frac{1}{\lambda}\int_{F_k} u_0 \dd \Lb^2 \right.\\ \left. + \Hd^1(\partial 
F_{k+1} \cap \partial \check F_{k+1}) - \Hd^1(\partial F_{k+1} \setminus 
(\partial \Omega \cup \partial \check F_{k+1})) -\frac{1}{\lambda} 
\int_{F_{k+1}} u_0 \dd \Lb^2 \right)  \\ \leq \J_{\Omega\setminus \check F_k, 
\emptyset, \partial \check F_k, \frac{u_0}{\lambda}} (F_k),
 \end{multline}
 in contradiction with the choice of $F_k$.

Proceeding as in \cite[Proposition 6.1]{bnp1}, we see that $\F_k$ attains a
minimum and for any two minimizers $\bet_1, \bet_2$ we have $\div \bet_1 = \div
\bet_2$ in $F_k$. Let us take any minimizer and denote it $\bxi_{F_k}$.
Arguing as in \cite[Theorem 6.7]{bnp1} and \cite[Theorem 5.3]{bnp2}, 
$\div \bxi_{F_k}
\in L^\infty(F_k) \cap BV(F_k)$. Let $\nu = \J_{F_k, \partial F_k^+, \partial
F_k^-, \frac{u_0}{\lambda}}(F_k)$. We
have
 \[\frac{1}{\Lb^2(F_k)}\int_{F_k} \left(\div \bxi_{F_k} + 
\frac{u_0}{\lambda}\right) \dd \Lb^2 = -
\frac{1}{\Lb^2(F_k)} \left(\Hd^1(\partial F_k^+) - \Hd^1(\partial F_k^-) - 
\frac{1}{\lambda}\int_{F_k} u_0 \dd \Lb^2\right) = - \nu .
\]
 Were $\div \bxi_{F_k} + \frac{u_0}{\lambda}$ not constant in $F_k$, there would 
exist $\mu < \nu$ such that
 \[ A_\mu = \left\{\bx \in F_k \colon - \left(\div \bxi_{F_k}(\bx) + 
\frac{u_0}{\lambda}\right) < \mu\right\} \]
 has positive measure and finite perimeter. Employing \cite[Proposition 
3.5]{bnp2},
 \begin{multline*} - \nu < - \mu < \frac{1}{\Lb^2(A_\mu)}\int_{A_\mu}\left( \div
\bxi_{F_k} + \frac{u_0}{\lambda}\right) \dd \Lb^2 \\= - 
\frac{1}{\Lb^2(A_\mu)}\left(\Per_1(A_\mu, F_k) +
\Hd^1(\partial F_k^+ \cap \partial^* A_\mu) - \Hd^1(\partial F_k^- \cap
\partial^* A_\mu) - \frac{1}{\lambda}\int_{F_k} u_0 \dd \Lb^2 \right) \\ = - 
\J_{F_k, \partial F_k^+, \partial F_k^-,\frac{u_0}{\lambda}}(A_\mu),
 \end{multline*}
 which would contradict that $F_k$ minimizes $\J_{F_k, \partial F_k^+, \partial
F_k^-, \frac{u_0}{\lambda}}$ (see the Remark before the statement of the
Theorem), hence $u_0 + \lambda \div \bxi_{F_k}$ is constant
in $F_k$ and therefore equal to its mean value:
 \[u_0 + \lambda \div \bxi_{F_k} = \frac{1}{\Lb^2(F_k)} \left(\int_{F_k} u_0 \dd 
\Lb^2 - \lambda(\Hd^1(\partial F_k^+) - \Hd^1(\partial F_k^-)) \right) = u,\]
 i.\,e.\;$u$ satisfies the Euler-Lagrange equation \eqref{el}.

 The last sentence of the assertion follows from \eqref{inequ}.
\end{proof}

\begin{remark}
 Instead of considering the minimization problem for $\J$,
one can consider at each step the maximization problem for $\check \J$ (see
Theorem \ref{cheegerbis}).
\end{remark}

\section{The $TV_1$ flow with $PCR$ initial datum}\label{facets}
In what now follows, we are concerned with the identification of the
evolution of initial datum $w \in PCR(\Omega)$, with $\Omega$ a rectilinear 
polygon, under the $l^1$-anisotropic
total variation flow \eqref{1-tvflow}. The result below determines the initial
evolution, prescribing possible
breaking of initial facets.

\begin{thm}
\label{z}
 Let $w \in PCR(\Omega)$ and let $G$ be
any grid such that $\Q_w$ is subordinate to $G$. Then, there exists a
field $\bet \in X_{1, \Omega}(w)$ and, for each $Q \in \Q_w$, a
partition $\T_Q$ of $Q$ and a consistent signature $\S_Q$ for $\T_Q$
subordinate to $G$, $\S_Q\colon F \mapsto (\partial F^+, \partial F^-)$ for $F
\in \T_Q$, such that
 \begin{itemize}
  \item[(1)] $\T = \bigcup_{Q \in \Q_w} \T_Q$ is a partition of $\Omega$
subordinate to
$G$, $\S \colon \T \to \C \times \C$ given by $\S(F) = \S_Q(F)$ for $F \in
\T_Q$ is a consistent signature for $\T$
subordinate to $G$,
  \item[(2)] for each $F \in \T$,
  \[\left.\div \bet\right|_F = - \frac{1}{\Lb^2(F)} \left(\Hd^1(\partial F^+) -
\Hd^1(\partial F^-)\right), \qquad \left. [\bet, \norm^F]\right|_{\partial
F^\pm} = \mp 1.\]
 \end{itemize}
\end{thm}

\begin{proof}
 We fix $Q \in \Q_w$ and produce the partition $\T_Q$ of $Q$ and
consistent signature $\S_Q$ for $\T_Q$ by means of an inductive procedure
analogous to the one in section \ref{mini}. First, by virtue of
Theorem \ref{cheeger}, the functional $\J_{Q, \partial Q^+, \partial Q^-,0}$
attains its minimum value on a rectilinear polygon
$F_1 \in \F(G)$. We define
 \[\partial F_1^- = \partial Q^- \cap \partial F_1
\mbox{ \ \ and \ \ }
\partial F_1^+ = (\partial F_1 \cap \partial Q^+) \cup (\partial F_1 \setminus 
\partial Q)  .\]
Next, in $k$-th step, we put $\check F_k = \bigcup_{j=1}^{k-1} F_j$. If $\check
F_k = Q$ we stop and put $\T_Q = \{F_1, \ldots, F_{k-1}\}$,
$\S_Q(F_j)=(\partial F_j^+, \partial F_j^-)$ for $j = 1,\ldots, k-1$. Otherwise 
we define $F_k$ as any
minimizer of
 \[ \J_{Q\setminus \check F_k, \partial Q^+,
\partial Q^-\cup \partial \check F_k, 0},\]
 and
\[\partial F_k^- = \partial F_k \cap \left( \partial Q^- \cup
\partial \check F_k\right),\]
\[\partial F_k^+ = (\partial F_k \cap \partial Q^+) \cup \left(\partial F_k 
\setminus \left( \partial Q \cup
\partial \check F_k \right)\right).\]

Now, for each $F_k \in \T_Q$, we define $\bet_{F_k}$ as any minimizer of the 
functional $\F_k$ defined on the set of vector fields $\bxi \in L^\infty(F_k, 
\RR)$ satisfying
 \[
\div \bxi \in L^2(F_k),\qquad |\bxi|_\infty \leq 1 \qquad \left. [\bxi, 
\norm^{F_k}]\right|_{\partial F_k^\pm} = \mp 1,
\qquad \left[\bxi, \norm^{F_k}]\right|_{\partial F_k \cap \partial \Omega} =
0
 \]
by $\F_k(\bxi) = \int_{F_k} (\div \bxi)^2 \dd \Lb^2$. As in the proof of Theorem
\ref{minimization}, we prove that $\div \bet_{F_k}$ is constant in each $F_k \in
T_Q$.

Next, we repeat the procedure for the rest of $Q \in \Q_w$ and define
$\bet$ by $\bet|_{F_k} = \bet_{F_k}$ for every $F_k \in \T_Q$, $Q \in
\Q_w$. Clearly, $\bet \in X_{1, \Omega}(w)$.
\end{proof}

\begin{thm}
\label{solutionform}
 Let $u_0 \in PCR(\Omega)$ and denote $G =
G_{u_0}$. Let $u$ be the global strong solution to \eqref{1-tvflow}. Then there
exist a finite sequence of time instances $0=t_0<t_1<\ldots<
t_n$, partitions  $\Q_0, \ldots, \Q_{n-1}$ of $\Omega$ and
consistent signatures $\S_k$ for $\Q_k$ subordinate to $G$, $k=0,
\ldots, n-1$, such that
\begin{equation}
\label{ut}
u_t = - \frac{1}{\Lb^2(F)} \left( \Hd^1(\partial F^+) - \Hd^1(\partial
F^-)\right) \quad \text{in } ]t_k, t_{k+1}[\times  F \quad \text{for
each } F \in \Q_k,
\end{equation}
$k = 0, 1, \ldots, n-1$ and $u(t, \cdot) = \frac{1}{\Lb^2(\Omega)} \int_\Omega
u_0 \dd \Lb^2$ for $t \geq t_n$. In particular, $u(t, \cdot) \in PCR(\Omega)$
and $\Q_{u(t, \cdot)}$ is subordinate to $G$ for all $t>0$. Furthermore,
\begin{equation}
\label{asymp}
t_n \leq \frac{1}{2}\left(\min_{F 
\subsetneq \Omega, F \in \F(G)} \frac{\Per_1(F, 
\Omega)}{\Lb^2(F)}\right)^{-1} \cdot \max \left|u_0 
- \frac{1}{\Lb^2(\Omega)} \int_\Omega u_0 \dd \Lb^2\right|.
\end{equation}
\end{thm}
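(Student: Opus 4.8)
The plan is to promote the one-step structural result of Theorem \ref{z} to a statement about the entire flow by feeding it into the semigroup information of Theorem \ref{divz}. Since $u_0 \in PCR(\Omega)$, Theorem \ref{z} applied to $w = u_0$ yields a field $\bet \in X_{1, \Omega}(u_0)$ with $\div\bet \in L^2(\Omega)$; by Theorem \ref{ch_subd} this gives $u_0 \in \D(\partial TV_{1, \Omega})$, so the flow \eqref{1-tvflow} started at $u_0$ is the unique strong solution and Theorem \ref{divz} applies to it. Write $v_0 := \div\bet$; by item (1) of Theorem \ref{z}, $v_0$ is piecewise constant on the refined partition $\T$ produced there, with $v_0|_F = - \frac{1}{\Lb^2(F)}\big(\Hd^1(\partial F^+) - \Hd^1(\partial F^-)\big)$ on each $F \in \T$.

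The core step is to show that $u(t, \cdot) = u_0 + t v_0$ on a maximal interval $[0, t_1[$ with $t_1 > 0$. For this I would check that the time--independent pair $\z \equiv \bet$, $u(t, \cdot) := u_0 + t v_0$, satisfies Definition \ref{defatvf} of a strong solution on $[0, t_1[$ and invoke uniqueness. All conditions of Definition \ref{defatvf} are immediate from Theorem \ref{z} except $\int_\Omega(\bet, Du(t, \cdot)) = \int_\Omega |Du(t, \cdot)|_1$, i.e.\ $\bet \in X_{1, \Omega}(u_0 + t v_0)$; since $u_0 + t v_0 \in PCR(\Omega)$, Lemma \ref{charac_subdiff} reduces this to compatibility of the normal traces of $\bet$ with the signature induced by $u_0 + t v_0$. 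Now, each jump of $u_0 + t v_0$ across a segment of the grid $G$ is affine in $t$, hence of constant sign for $t$ small: across segments of $J_{u_0}$ it coincides with the sign of the jump of $u_0$, which matches $\bet$ by items (3)--(4) of Theorem \ref{z}; across a segment internal to a facet $Q$ of $u_0$ along which a new jump forms, the sign equals $\sgn(v_0|_F - v_0|_{F'})$ for the two adjacent pieces $F, F' \in \T_Q$, and it again matches $\bet$ because the Cheeger values produced by the algorithm of Theorem \ref{z} are non-decreasing along the construction of $\T_Q$ --- by the inequality $\frac{x_1 + x_2}{y_1 + y_2} \le \frac{x_1}{y_1}$ as in the proof of Theorem \ref{minimization}, cf.\ \eqref{inequ}, applied here with zero weight --- so that a piece carrying a $\partial F^+$ segment borders only pieces of no larger speed. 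Denoting by $\Q_0$ the partition of $u_0 + t v_0$ for $t \in \,]0, t_1[$ (a coarsening of $\T$ fusing adjacent pieces on which both $u_0$ and $v_0$ agree, hence subordinate to $G$) and by $\S_0$ the induced signature, one gets \eqref{ut} on $]0, t_1[$ by summing the defining identity of $v_0$ over the fused pieces, the internal segments cancelling because they carry opposite signs in the consistent signature. At $t = t_1$ we have $u(t_1, \cdot) = u_0 + t_1 v_0 \in PCR(\Omega)$ with $J_{u(t_1, \cdot)} \subset \bigcup\I(G)$, so $G(u(t_1, \cdot)) \subset G$; by the semigroup property of strong solutions we restart from $u(t_1, \cdot)$ and iterate, producing phases $[t_k, t_{k+1}[$ on which $u$ is affine with $u_t \equiv \div\z(t_k, \cdot) =: v_k$ of the form \eqref{ut}, and with the structure of $u(t, \cdot)$ subordinate to $G$ at all times.

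Finiteness of the number of phases follows from Theorem \ref{divz} together with the finite geometry. By Theorem \ref{divz}(3), $t \mapsto \|\div\z(t, \cdot)\|_{L^2(\Omega)}$ is non-increasing; and $\div\z(t, \cdot)$ depends only on the type $(\Q(u(t, \cdot)), \S(u(t, \cdot)))$ of $u(t, \cdot)$, since the algorithm of Theorem \ref{z} only involves the Cheeger problems $\J_{Q, \partial Q^+, \partial Q^-, 0}$, whose data is that type and the grid $G(u(t, \cdot)) \subset G$; as there are finitely many types subordinate to $G$, the non-increasing function $\|\div\z(\cdot, \cdot)\|_{L^2(\Omega)}$ attains finitely many values, hence is a step function with finitely many steps $[s_{i-1}, s_i[$, $i = 1, \dots, m$. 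On each step $\div\z$ is in fact constant: it is right-continuous and finite-valued (Theorem \ref{divz}(2)), so if it jumped at some $\tau \in \,]s_{i-1}, s_i[$ then its left value $w_0 = \lim_{t \uparrow \tau}\div\z(t, \cdot)$, being a limit of elements of the closed graph of $- \partial TV_{1, \Omega}$ along $u(t, \cdot) \to u(\tau, \cdot)$, would lie in $- \partial TV_{1, \Omega}(u(\tau, \cdot))$, while $\div\z(\tau, \cdot)$ is, by Theorem \ref{divz}(1), its element of least $L^2$ norm; since $\|w_0\|_{L^2(\Omega)} = \|\div\z(\tau, \cdot)\|_{L^2(\Omega)}$ and the least-norm element of a closed convex set is unique, $w_0 = \div\z(\tau, \cdot)$, a contradiction. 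Hence the phases coincide with the finitely many intervals $[s_{i-1}, s_i[$, and the $\Q_k, \S_k$ are as constructed above. The process ends once $v_k = 0$, equivalently $0 \in - \partial TV_{1, \Omega}(u(t_k, \cdot))$, i.e.\ $u(t_k, \cdot)$ minimizes $TV_{1, \Omega}$, hence is constant; and since $\int_\Omega \div\bet = \int_{\partial\Omega}[\bet, \norm^\Omega]\dd\Hd^1 = 0$ the spatial mean is preserved along the flow, so that constant is $\frac{1}{\Lb^2(\Omega)}\int_\Omega u_0 \dd\Lb^2$.

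It remains to prove the bound \eqref{asymp}. Whenever $u(t, \cdot)$ is not constant, let $F \in \Q(u(t, \cdot))$ be a facet on which $u(t, \cdot)$ attains its maximum; since $\Q(u(t, \cdot))$ is the coarsest partition, every adjacent facet carries a strictly smaller value, so $\partial F^- = \emptyset$ and $\partial F^+ = \partial F \setminus \partial\Omega$, whence by \eqref{ut} and \eqref{perimeterinteral} $u_t|_F = - P_1(F, \Omega)/\Lb^2(F) \le - \rho$ with $\rho := \frac{1}{\Lb^2(\Omega)}\min_{F' \subsetneq \Omega,\, F' \in \F(G)} P_1(F', \Omega) > 0$ (here $F \subsetneq \Omega$ and $F \in \F(G)$). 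Thus $\max_\Omega u(t, \cdot)$ decreases at rate at least $\rho$ until it reaches the conserved mean, and symmetrically $\min_\Omega u(t, \cdot)$ increases at rate at least $\rho$ until it reaches the mean, so $u$ becomes constant by time $\rho^{-1}\max_\Omega\big|u_0 - \frac{1}{\Lb^2(\Omega)}\int_\Omega u_0 \dd\Lb^2\big|$, which is exactly \eqref{asymp}. The chief difficulty is the second step: showing that the partition, signature and vector field delivered by the algorithm of Theorem \ref{z} genuinely describe the infinitesimal evolution, i.e.\ that $\bet$ calibrates $u_0 + t v_0$ for small $t > 0$; this hinges on the monotonicity of the Cheeger values along the construction and on the sign cancellation when facets fuse, after which the rest is semigroup bookkeeping.
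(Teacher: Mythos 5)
Your construction of the individual phases is essentially the paper's: apply Theorem \ref{z} at $u(t_k,\cdot)$, check via Lemma \ref{charac_subdiff} and the monotonicity of the Cheeger values (the analogue of \eqref{inequ} with zero weight) that the time-independent field calibrates the affine continuation, invoke uniqueness of strong solutions and the semigroup property to restart at the first merging time, and obtain \eqref{asymp} from the extremal facets together with conservation of the mean. Up to that point you are reproving what the paper proves, in the same way.

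The gap is in the finiteness argument. You correctly observe that $\|\div \z(t,\cdot)\|_{L^2(\Omega)}$ is non-increasing and, as long as the induction is running, takes values in a finite set determined by the (finitely many) types $(\Q(u(t,\cdot)),\S(u(t,\cdot)))$ subordinate to $G$, and your demiclosedness/least-norm argument does show $\div\z$ cannot jump at a time where its norm does not drop. But your concluding sentence ``hence the phases coincide with the finitely many intervals $[s_{i-1},s_i[$'' does not follow: nothing you prove excludes a merging time $\tau$ in the interior of such an interval, with $\div\z$ (hence its norm) unchanged across $\tau$ --- the two facet values would simply cross and the jump would reverse orientation, the old field would stop calibrating, and your induction would have to restart at $\tau$ without any decrease of the norm. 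In that scenario your counting does not bound the number of merging times, so you have not ruled out their accumulation, and the construction might fail to reach all of $[0,\infty[$; this is precisely where the paper invests its main effort. The paper proves that at every merging time the norm drops strictly: assuming $\div\z$ is unchanged across $t_j$, it looks at the merged cluster inside a facet $Q$ of $\Q(u(t_j,\cdot))$, takes the piece $F_0$ of minimal speed, and shows that \eqref{zint} applied after the merging forces $[\z,\norm^{F_0}]=+1$ on $\partial F_0^-\cup(\partial F_0^+\setminus\partial Q)$, which by the divergence theorem makes the mean of $\div\z$ over $F_0$ strictly larger than before --- a contradiction; finiteness of the attainable values then gives a uniform jump size $\gamma(G)$ and hence finitely many merging times. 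You need either this lemma or a substitute (for instance: on an interval where $\div\z$ is constant, $u$ is affine in $t$ with both $u$ and $u_t$ piecewise constant on the fixed finite grid $G$, so merging times in that interval are zeros of finitely many non-trivial affine functions, hence finitely many per interval); as written, the identification of phases with the intervals of constant norm is unjustified and the proof of the existence of the finite sequence $t_0<\dots<t_n$ is incomplete.
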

\begin{remark}
 Theorem \ref{solutionform} implies that $t \mapsto u(t, \cdot)$ has a 
representative that is Lipschitz with values in $BV(\Omega)$.
\end{remark}

\begin{proof}
 We proceed inductively, starting with $j=0$. Suppose we have proved that there
exist time instances $0=t_0<\ldots< t_j$, partitions  $\Q_k$
of $\Omega$ and consistent signatures $\S_k$ for $\Q_k$ subordinate to $G$,
$k=0, \ldots, j-1$, such that \eqref{ut} holds for $k=1, \ldots, j$ (for $j=0$
this assumption is vacuously satisfied). This implies that $u(t, \cdot) \in
PCR(\Omega)$ and $\Q_{u(t, \cdot)}$ is subordinate to $G$ for $t \in [0, t_j]$.
Let $\Q_j = \T$, $\S_j = \S$, $\z_j = \bet$, where $\T$, $\S$ and $\bet$ are
the partition of $\Omega$, the consistent signature for $\T$ subordinate to $G$
and the vector field produced by Theorem \ref{z} given $w = u(t_j, \cdot)$. For
$T>0$ let us define a function $\widetilde u_j \in C([t_j, T[, BV(\Omega))$ by
 \[\widetilde u_{j,t}(t, \cdot) = \div \z_j \text{ for } t \in ]t_j, T[,\]
 \[\widetilde u_j(t_j, \cdot) = u(t_j, \cdot). \]
Clearly, pair $(\widetilde u_j, \z_j)$ satisfies regularity conditions as well
as (\ref{zeqn}, \ref{zselect}) from Definition \ref{defatvf}. Let us choose
$T=t_{j+1}$ as the first time instance $t>t_j$ such that
\[\widetilde u_j(t, \cdot)|_{F} = \widetilde u_j(t, \cdot)|_{ 
F'},\]
where $F, F' \in \Q_j$, $F \neq F'$, $\Hd^1(\partial F \cap \partial F') >0$;
i.\,e.\;the first moment of merging of facets after time $t_j$.

Due to condition (4) of Theorem \ref{z}, one can show, similarly as in the proof 
of Theorem \ref{minimization}, that condition \eqref{zint} of Definition
\ref{defatvf} is satisfied for $(\widetilde u_j, \z_j)$ and $\widetilde u_j$ is
the solution to \eqref{atv} with initial datum $u(t_j, \cdot)$ in $[t_j,
t_{j+1}[$. Then, due to continuity, $u$ is necessarily equal to $\widetilde u_j$
in $[t_j, t_{j+1}]$, in particular, for $t \in [t_j, t_{j+1}]$, $u(t, \cdot) \in
PCR(\Omega)$ and $\Q_{u(t, \cdot)}$ is subordinate to $G$. This completes the
proof of the induction step.

Now, let us prove that this procedure terminates after a finite number of steps.
For this purpose, we rely on Theorem \ref{divz}. 
In fact, we  prove
that there exists a constant $\gamma=\gamma(G)>0$ such that at each
$t_j$, $j>0$ the non-increasing function $t \mapsto \|\div \z(t,
\cdot)\|_{L^2(\Omega)}$ has a jump of size at least $\gamma$. Here $\z \in 
L^\infty(]0,\infty[\times \Omega)$ satisfies the conditions in Definition 
\ref{defatvf} with $u$ being the strong solution to \eqref{1-tvflow} starting 
with $u_0$. 

First, we argue that $\|\div \z(t_j, \cdot)\|_{L^2(\Omega)} < \|\div \z(t,
\cdot)\|_{L^2(\Omega)}$ for each $t \in [t_{j-1}, t_j[$ (in this interval
$t \mapsto \div
\z(t, \cdot)$ is a constant function). We will reason by contradiction.
If $\|\div
\z(t_j, \cdot)\|_{L^2(\Omega)} = \|\div \z(t_{j-1}, \cdot)\|_{L^2(\Omega)}$,
then $\z(t_{j-1}, \cdot)$ is a minimizer of $\F_\Omega$  in $X_{1,
\Omega}(u(t_j, \cdot))$ and consequently $\div \z(t_{j-1}, \cdot) = \div \z(t_j,
\cdot) = \div \z(t, \cdot)$ for $t \in [t_{j-1}, t_{j+1}[$ (see Theorem
\ref{divz}). According to Lemma \ref{charac_subdiff}, the minimization problem
for $\F_\Omega$ in $X_{1,
\Omega}(u(t_j, \cdot))$ is equivalent to minimization of functionals $\F_Q$
defined by $\F_Q(\bet) = \int_Q \left(\div \bet \right)^2 \dd \Lb^2$
 on the set of vector fields $\bet \in L^\infty(Q, \RR)$ satisfying
 \[\div \bet \in L^2(Q),\qquad |\bet|_\infty \leq 1
\qquad \left. [\bet, \norm^{Q}] \right |_{\partial
Q^\pm} = \mp 1, \qquad\left.[\bet,  \norm^Q]\right |_{\partial Q \cap \partial
\Omega} = 0\]
separately for each $Q \in \Q_{u(t_j, \cdot)}$, where
$(\partial Q^+, \partial Q^+) = \S_{u(t_j, \cdot)}(Q)$. Let us take $Q \in
\Q_{u(t_j, \cdot)}$ such that there exist $F_1, F_2$ in $\Q_{j-1}$, $F_1 \neq
F_2$, $\Hd^1(\partial F_1 \cap \partial F_2) >0$, with $F_1, F_2 \subset Q$.
Denote by $\Q_{j-1,Q}$ the maximal subset of $\Q_{j-1}$ with the properties
\begin{itemize}
 \item $F_1, F_2$ belong to $\Q_{j-1, Q}$,
 \item if $F$ belongs to $\Q_{j-1}$ then $F \subset Q$,
 \item if $F$ belongs to $\Q_{j-1, Q}$ then there exists $F' \in \Q_{j-1, Q}$,
$F' \neq F$ with $\Hd^1(\partial F \cap \partial F') \neq 0$.
\end{itemize}
Let now $F_0$ be a minimizer of $F \mapsto
u_t|_{]t_{j-1}, t_{j+1}[\times  F} = \div \z
|_{]t_{j-1}, t_{j+1}[\times  F}$ among $F \in \Q_{j-1,Q}$. Then, due
to
\eqref{zint} and the way $Du$ changes after the moment of merging, we
necessarily have
\[\left. [\z, \norm^{F_0}]\right|_{]t_j, t_{j+1}[\times \partial F_0^- \cup 
(\partial F_0^+ \setminus \partial Q)} = +1.\]
Due to the choice of
$F_0$, $\Hd^1(\partial F_0^+ \setminus \partial Q) > 0$, hence
\[\left. \frac{1}{\Lb^2(F_0)}\int_{F_0} \div \z \dd \Lb^2\right|_{[t_j, 
t_{j+1}[} > \left. \frac{1}{\Lb^2(F_0)}\int_{F_0} \div \z \dd 
\Lb^2\right|_{[t_{j-1}, t_j[},\]
a desired contradiction.

Next, we observe that there is only a finite set of values, depending only on 
$G$, that $\|\div \z(t, \cdot)\|_{L^2(\Omega)}$ can
achieve. Indeed, for all $t\geq 0$, $\div \z(t, \cdot)$ is the unique result of
minimization problems for $\F_Q$ with $Q \in
\Q_{u(t_j, \cdot)}$, $(\partial Q^+, \partial Q^-) = \S_{u(t_j, \cdot)}(Q)$,
$j=0, 1, \ldots$. Each $\Q_{u(t_j, \cdot)}$ is a partition of $\Omega$
subordinate to $G$, each $\S_{u(t_j, \cdot)}$ is a consistent signature for
$\Q_{u(t_j, \cdot)}$ subordinate to $G$. There is only a finite number of
these.

It remains to prove the estimate on $t_n$. In any time instance $t \geq 0$ the
maximum (minimum) value of $u(t, \cdot)$ is attained in a rectilinear polygon
$F_+(t)$ ($F_-(t)$). In all but a finite number of $t$ we have
\[\mp u_t(t, \cdot) |_{F_\pm(t)} = \mp \div \z(t, \cdot) |_{F_\pm(t)} = 
\frac{{\rm Per}_1(F_\pm(t), \Omega)}{\Lb^2(F_\pm(t))}\geq \min_{F 
\subsetneq \Omega, F \in \F(G)} \frac{\Per_1(F, 
\Omega)}{\Lb^2(F)},\]
unless $F^+ = F^- = \Omega$. Furthermore, testing \eqref{atv} with
$ \chi_\Omega$ yields $\frac{\dd}{\dd t}\int_\Omega u(t, \cdot) \dd \Lb^2 = 0$
in a.\,e.\;$t\geq 0$ and, due to continuity of the semigroup in $L^2$,
\[ \frac{1}{|\Omega|}\int_\Omega u(t, \cdot) \dd \Lb^2 = 
\frac{1}{|\Omega|}\int_\Omega u_0 \dd \Lb^2\]
in all $t> 0$. This concludes the proof.
\end{proof}

\section{The case $\Omega = \RR$}\label{cauchy}
In this section we transfer previous results to the case
$\Omega = \RR$. First, we note that all the definitions and theorems in
subsections \ref{sec:atv} and \ref{sec:atvf} carry over without change (the
Neumann boundary condition becomes void) to this case (see \cite{moll}). As for 
the definitions in subsections \ref{sec:rect}, it turns out that the statements 
of our results transfer nicely to the case of the whole plane if we allow for 
certain \emph{unbounded rectilinear polygons}. Accordingly, in this section a 
subset $F \subset \RR$ will be called a rectilinear polygon if either
\begin{itemize}
 \item $F=\bigcup \R_F$ with a finite $\R_F \subset \R$ (in which case we say 
that $F$ is a bounded rectilinear polygon)
 \item or $F=\overline{\RR \setminus \bigcup \R_F}$ with a finite $\R_F \subset 
\R$ (in which case we say that $F$ is an unbounded rectilinear polygon).
\end{itemize}
Next, we  restrict ourselves to non-negative compactly supported initial data. 
We say that a non-negative compactly supported function $w \in BV(\RR)$ belongs 
to $PCR_+(\RR)$ if there exists a partition $\Q$ of $\RR$ such that $w$ is 
constant in the interior of each $Q \in \Q$. Note that any such $\Q$ contains 
exactly one unbounded set $Q_0$ and $w|_{Q_0} = 0$.

The essential difficulty in obtaining results analogous to Theorems 
\ref{minimization} and \ref{solutionform} lies in dealing with unbounded sets 
that one expects to be produced by a suitable version of the algorithm in 
Section \ref{mini}. For this purpose, we need the following

\begin{lemma}\label{con.calib}
  Let $f\in PCR_+(\RR)$ and let $F$ be an unbounded rectilinear polygon. Then, 
there exists a vector field $\bxi_F \in X_F$ such that
\begin{equation}
\label{xi.calib}
|\bxi_F|_\infty \leq 1, \qquad \div \bxi_F + f|_F = \text{const.}, \qquad 
\quad [\bxi_F, \norm^F] = 1 
\end{equation}
if and only if
\begin{equation}
    \label{cond.cali}
    \Hd^1(\partial^*E \cap \partial F) + \int_E f \dd \Lb^2 \leq {\rm Per}_1(E, 
\interior F)
\end{equation}
for all $E\subset F$ bounded of finite perimeter. Moreover, in this case
$\div \bxi_F + f = 0$ in $F$ for any vector field $\bxi_F$ satisfying
\eqref{xi.calib}.
\end{lemma}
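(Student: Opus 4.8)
The plan is to prove the two implications separately, together with the ``moreover'' clause, leaning on the Anzellotti Gauss--Green formula and on the variational constructions for bounded rectilinear polygons (Theorems \ref{cheeger}, \ref{z}, \ref{minimization}, \ref{divz}), plus an exhaustion of $F$.

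\emph{The constant is zero, and necessity of \eqref{cond.cali}.} Suppose $\bxi_F$ satisfies \eqref{xi.calib}, say $\div\bxi_F = c - f$. Integrating this over the bounded Lipschitz domain $F\cap[-k,k]^2$ and using the divergence theorem, the left-hand side equals a boundary integral over $\partial F\cup(\partial([-k,k]^2)\cap F)$, hence is bounded by $C(k+1)$ since $|\bxi_F|\le\sqrt2$ pointwise; on the other hand it equals $c\,\bigl(4k^2-\Lb^2(\RR\setminus F)\bigr)-\int_F f\dd\Lb^2$ once $k$ is large enough that $\supp f$ and $\RR\setminus F$ lie in $[-k,k]^2$. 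Dividing by $k^2$ and letting $k\to\infty$ forces $c=0$, which is the last assertion of the lemma. Now take $c=0$, fix a bounded $E\subset F$ of finite perimeter, and apply the Anzellotti Green's formula on $\interior F$ to the pair $\bxi_F$, $\chi_E$: the term $\int_{\interior F}\chi_E\div\bxi_F$ equals $-\int_E f\dd\Lb^2$, the boundary term equals $\Hd^1(\partial^*E\cap\partial F)$ because $[\bxi_F,\norm^F]=1$ and the interior trace of $\chi_E$ on $\partial F$ is $1$ exactly $\Hd^1$-a.e.\;on $\partial^*E\cap\partial F$, and $\int_{\interior F}(\bxi_F,D\chi_E)\le|(\bxi_F,D\chi_E)|(\interior F)\le P_1(E,\interior F)$ by the anisotropic bound (a consequence of $|\cdot|^*_1(\bxi_F)\le1$ and \eqref{anz.ab.c}). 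Rearranging gives
\[
\Hd^1(\partial^*E\cap\partial F)+\int_E f\dd\Lb^2=\int_{\interior F}(\bxi_F,D\chi_E)\le P_1(E,\interior F),
\]
that is, \eqref{cond.cali}.

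\emph{Sufficiency.} Assume \eqref{cond.cali}. I would fix a grid $G'$ refining $G(f)$ and containing the lines through the edges of $\partial F$, take an increasing sequence of large axis-parallel squares $Q_k$ in $G'$ with $\supp f\cup(\RR\setminus F)\subset\interior Q_1$, and set $F_k=F\cap Q_k$, a bounded rectilinear polygon with $\partial F_k=\partial F\cup\partial Q_k$ for large $k$. On $F_k$ I would solve an $L^2$-divergence minimization problem of the type used in Theorems \ref{z} and \ref{minimization}: minimize $\bxi\mapsto\int_{F_k}(\div\bxi+f)^2\dd\Lb^2$ over the nonempty, convex, weakly-$*$ closed set of $\bxi\in X_{F_k}$ with $|\bxi|_\infty\le1$, $[\bxi,\norm^{F_k}]|_{\partial F}=1$, and free (natural) boundary condition on $\partial Q_k\cap F$. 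As in the proofs of those theorems, a minimizer $\bxi_k$ exists and, via the Cheeger decomposition of Theorem \ref{cheeger}, $\div\bxi_k+f$ is constant on each piece of a partition of $F_k$ subordinate to $G'$, the constants being (minus) the values of the associated Cheeger functionals, the natural condition on $\partial Q_k$ pushing the value on the outermost pieces to zero as $k\to\infty$. Condition \eqref{cond.cali} is precisely what is needed for these minimizers to be compatible as $k$ grows --- equivalently, for $F$ itself to be the relevant Cheeger-type minimizer --- so that, after extending each $\bxi_k$ by a fixed admissible field and extracting a subsequence converging weakly-$*$ in $L^\infty(F,\RR)$ with divergences converging in $L^2_{loc}(F)$, the limit $\bxi_F$ satisfies $|\bxi_F|_\infty\le1$, $[\bxi_F,\norm^F]=1$ on $\partial F$ (stability of the weak normal trace, $\partial F$ being interior to $F_k$ for large $k$) and $\div\bxi_F+f\equiv\text{const.}$; since $\div\bxi_F\in L^2(F)$ and $F$ is unbounded, the constant must be $0$, so $\div\bxi_F=-f\in L^2(F)$ and $\bxi_F\in X_F$ satisfies \eqref{xi.calib}.

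\emph{Main obstacle.} The delicate point is the passage to the limit in the sufficiency step: one must rigorously reduce the unbounded calibration problem to the bounded ones on $F_k$ and control the flux escaping to infinity, showing that no ``curvature'' is lost so that the limiting divergence is genuinely constant. This is the only place where the full strength of \eqref{cond.cali} is used, and it plays exactly the role of the min-cut inequality in a max-flow/min-cut duality --- the same mechanism behind the classical Euclidean criteria for calibrability of complements of sets, which \eqref{cond.cali} generalizes to the $l^1$-anisotropic rectilinear setting with a source term $f$.
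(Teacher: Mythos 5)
Your ``only if'' direction and the claim that the constant vanishes are fine and essentially match the paper (which simply quotes \cite[Lemma 6]{thetvf} for the constant and then writes the one-line Gauss--Green inequality $0=\int_E(\div\bxi_F+f)\dd\Lb^2\geq \Hd^1(\partial^*E\cap\partial F)+\int_E f\dd\Lb^2-P_1(E,\interior F)$); in fact, since $\bxi_F\in X_F$ already forces $\div\bxi_F\in L^2(F)$ with $F$ unbounded and $f$ compactly supported, the constant must be $0$ even without your exhaustion by squares. The genuine gap is in the ``if'' direction, which is the substantive half of the lemma and where your argument is a plan rather than a proof. You minimize $\int_{F_k}(\div\bxi+f)^2$ on $F_k=F\cap Q_k$ with a \emph{free} boundary condition on $\partial Q_k\cap F$; but the structure results you invoke (Theorems \ref{minimization} and \ref{z}, via Theorem \ref{cheeger}) are proved for prescribed normal traces ($\mp1$ or $0$) on the whole boundary, so neither the piecewise constancy of $\div\bxi_k+f$ nor the identification of the constants with Cheeger values follows from what is available. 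More importantly, the assertions that the natural condition ``pushes the value on the outermost pieces to zero as $k\to\infty$'' and that \eqref{cond.cali} ``is precisely what is needed for these minimizers to be compatible'' are exactly what has to be proved: without a quantitative estimate forcing the per-piece constants to tend to zero locally uniformly, a weak $L^2_{loc}$ limit of piecewise constant divergences need not be constant at all. You flag this yourself as the main obstacle, so the key step of the sufficiency proof is missing.

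For comparison, the paper's proof needs no exhaustion or limit. It fixes a single ball $B_\infty(R)$ with $R$ so large that \eqref{assR} holds, and, crucially, prescribes a \emph{constant} flux $c(R)=-\bigl(\Hd^1(\partial F)+\int_F f\dd\Lb^2\bigr)/\Hd^1(\partial B_\infty(R))$ on $\partial B_\infty(R)$ rather than a free condition, chosen so that flux balance turns ``constant'' into ``zero''. Constancy of $\div\bxi_R+f$ is then proved by contradiction: a superlevel set $Q_\lambda$ would give \eqref{Qlambdacauchy2}, and the delicate point --- precisely the gap in \cite{thetvf} the authors say they are patching --- is a topological analysis of a connected component $\widetilde Q_\lambda$: either its boundary misses $\partial B_\infty(R)$ (contradicting \eqref{cond.cali}), or it misses $\partial F\cup\supp f$ (self-contradictory), or its relative perimeter is at least $2\dist(\partial B_\infty(R),\partial F\cup\supp f)$, contradicting \eqref{assR}, except in an annular configuration whose outer boundary touches all four sides of $B_\infty(R)$, which is handled separately by filling in the region up to $\partial B_\infty(R)$ and using $-c(R)<1$. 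Finally $\bxi_R$ is extended outside $B_\infty(R)$ by an explicit divergence-free field with matching traces. If you wish to keep your exhaustion scheme, you would have to reproduce an estimate of this type on each $F_k$ anyway (at which point the limit procedure becomes superfluous); as written, the sufficiency implication is not established.
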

This is a version of \cite[Theorem 5 and Lemma 6]{thetvf} where analogous 
statement is proved for isotropic perimeter in case $f=0$. The idea of the proof 
is to consider auxiliary problem in a large enough ball. The proof of Lemma 
\ref{con.calib} follows along similar lines, however we decided to put it here, 
also because it seems that there is a small gap in the proof of \cite[Theorem 
5]{thetvf} that we patch. Namely the first inequality in line 12, page 511 of 
\cite{thetvf} (corresponding to \eqref{distineq} here) does not seem to be 
satisfied in general.

\begin{proof}
 It is easy to see that if $\bxi_F$ satisfies \eqref{xi.calib} then $\div\bxi_F 
+ f = 0$ in $F$ (see \cite[Lemma 6]{thetvf}). Thus, if a vector field $\bxi_F 
\in X_F$ satisfies \eqref{xi.calib}, then we have for any bounded set $E \subset 
\RR$ of finite perimeter
 \[ 0 = \int_E \div \bxi_F + f \dd \Lb^2 \geq \Hd^1(\partial^*E \cap \partial F) 
+ \int_E f \dd \Lb^2 - {\rm Per}_1(E, \interior F). \]
 Now assume that \eqref{cond.cali} holds. Let us take $R>0$ large enough that
 \begin{equation}
  \label{assR}
  2\, \dist(\partial B_\infty(R), \partial F \cup \supp f) \geq \Hd^1(\partial 
F) + \int_F f \dd \Lb^2.
 \end{equation}
 Put
 $c(R) = - \frac{\Hd^1(\partial F) + \int_F f \dd \Lb^2}{\Hd^1(\partial 
B_\infty(R))}$. Denote by $\bxi_R$ the
minimizer of functional $\F$ defined by
$\F(\bet) = \int_{F \cap B_\infty(R)} \left(\div \bet + f\right)^2 \dd \Lb^2$
on the set of vector fields $\bet \in X_{F \cap B_\infty(R)}$ satisfying
 \[|\bet|_\infty \leq 1, \qquad [\bet,
\norm^F] = 1, \qquad [\bet,
\norm^{B_\infty(R)}] = c(R).\]
If $\div \bxi_R + f$ is
constant in $F \cap B_\infty(R)$ then, due to choice of $c(R)$, $\div \bxi_R + 
f\equiv
0$ in $F \cap B_\infty(R)$. Supposing that the opposite is true, we obtain, as
in the proof of Theorem \ref{minimization}, that there exists $\lambda > 0$
such that
\[ Q_\lambda = \{ \bx \in F \cap B_\infty(R) \colon \div \bxi_R + f> \lambda\}\]
is a set of positive measure and finite perimeter, and we have
\begin{multline}
\label{Qlambdacauchy}
- {\rm Per}_1(Q_\lambda, \interior F \cap B_\infty(R)) + 
\Hd^1(\partial^*Q_\lambda
\cap \partial F) + \int_{Q_\lambda} f \dd \Lb^2 +  c(R) 
\Hd^1(\partial^*Q_\lambda \cap \partial B_\infty(R)) \\ \geq \lambda 
\Lb^2(Q_\lambda) > 0
\end{multline}
which can be rewritten as
\begin{equation}
 \label{Qlambdacauchy2}
 -{\rm Per}_1(Q_\lambda) + 2 \Hd^1(\partial^*Q_\lambda
\cap \partial F) + \int_{Q_\lambda} f \dd \Lb^2 + (1+ c(R)) 
\Hd^1(\partial^*Q_\lambda \cap \partial B_\infty(R)) > 0.
\end{equation}
Assumption \eqref{assR} implies that $c(R)>-1$, so we  approximate $Q_\lambda$ 
with a closed smooth set as in the proof of Lemma \ref{rectapprox} in such a way 
that \eqref{Qlambdacauchy2} still holds. Due to additivity of left hand side of 
\eqref{Qlambdacauchy2}, there is a connected component $\widetilde Q_\lambda$ of 
this smooth set that also satisfies \eqref{Qlambdacauchy2}, or equivalently
\begin{equation}
 \label{wtQlambda}
 - {\rm Per}_1(\widetilde Q_\lambda, \interior F \cap B_\infty(R)) + 
\Hd^1(\partial \widetilde Q_\lambda
\cap \partial F) + \int_{\widetilde Q_\lambda} f \dd \Lb^2 +  c(R) 
\Hd^1(\partial \widetilde Q_\lambda \cap \partial B_\infty(R)) > 0.
\end{equation}

If $\partial \widetilde Q_\lambda \cap \partial B_\infty(R) = \emptyset$,
\eqref{wtQlambda} contradicts \eqref{cond.cali}. On the other hand, if $\partial
\widetilde Q_\lambda \cap (\partial F \cup \supp f) = \emptyset$,
\eqref{wtQlambda} itself is a contradiction (recall that $c(R) \leq 0$).
Taking these observations into
account, there necessarily holds
\begin{equation}
\label{distineq}
{\rm Per}_1(\widetilde Q_\lambda, \interior F \cap B_\infty(R)) \geq 2 \,
\dist(\partial B_\infty(R), \partial F \cup \supp f),
\end{equation}
whence \eqref{assR} yields a contradiction, \emph{unless} $\widetilde Q_\lambda$
is not simply connected in such a way that there is a connected component
$\Gamma$ of $\partial \widetilde Q_\lambda$ such that
\begin{itemize}
 \item $\widetilde Q_\lambda$ is inside of $\Gamma$ ($\Gamma$ is the exterior
boundary of $\widetilde Q_\lambda$),
 \item $\Gamma$ does not intersect $\partial F \cup \supp f$,
 \item and $\Gamma$ intersects all four sides of $B_\infty(R)$.
\end{itemize}
In this case, let us denote by $\widehat Q_\lambda$ the union of $\widetilde
Q_\lambda$ and the region between $\Gamma$ and $\partial B_\infty(R)$. We have
$\int_{\Gamma \setminus \partial B_\infty(R)} |\norm^{\widetilde Q_\lambda}|_1
\dd \Hd^1 \geq \Hd^1(B_\infty(R) \setminus \Gamma)$ and consequently (as $-
c(R)<1$)
\begin{multline*} {\rm Per}_1(\widetilde Q_\lambda, \interior F \cap 
B_\infty(R)) - c(R) \Hd^1(\partial \widetilde Q_\lambda \cap \partial 
B_\infty(R)) \geq {\rm Per}_1(\widehat Q_\lambda, \interior F \cap B_\infty(R)) 
\\ - c(R) \Hd^1(\partial \widehat Q_\lambda \cap \partial B_\infty(R))  = {\rm 
Per}_1(\widehat Q_\lambda, \interior F \cap B_\infty(R)) + \Hd^1(\partial F) + 
\int_F f \dd \Lb^2,
\end{multline*}
a contradiction with \eqref{wtQlambda} which implies that $\div \bxi_R \equiv 
0$.
Now, we define $\bxi_F \in X_F$ by
\begin{equation}
 \bxi_F(x_1, x_2) = \left\{\begin{array}{ll}
                  \bxi_R(x_1, x_2) & \text{in } F \cap B_\infty(R), \\
                  (c(R) \sgn x_1, 0) & \text{in } \{|x_1| > R, |x_2| < R\} \\
                  (0, c(R) \sgn x_2) & \text{in } \{|x_1| < R, |x_2| > R\} \\
                  (0, 0) & \text{in } \{|x_1| > R, |x_2| > R\}. \\
                 \end{array}\right.
\end{equation}
\end{proof}

Now given $\lambda >0$, $u_0 \in PCR_+(\RR)$, grid $G=G_f$, and an unbounded
rectilinear polygon $F$ subordinate to $G$ with signature $(\partial F^+,
\partial F^-) = (\emptyset, \partial F)$, let us denote by $R_0$ the smallest
rectangle containing the support of $u_0$ (clearly $R_0$ is subordinate to $G$
and $\partial F\subset R_0$). Next, suppose that there is a set of finite
perimeter $E \subset F$ such that $\J_{F, \partial F^+, \partial F^-,
\frac{u_0}{\lambda}}(E) < 0$. Then there holds $\J_{F, \partial F^+, \partial
F^-, \frac{u_0}{\lambda}}(R_0 \cap E) \leq \J_{F, \partial F^+, \partial F^-,
\frac{u_0}{\lambda}}(E)$. Indeed, we only need to argue that ${\rm Per}_1(R_0
\cap E) \leq {\rm Per}_1(E)$, which follows easily by approximation of $E$ with 
smooth
sets. This way, we obtain the following alternative:
\begin{itemize}
 \item either $\J_{F, \partial F^+, \partial F^-, \frac{u_0}{\lambda}}$ is 
minimized by a bounded rectilinear polygon subordinate to $G$,
 \item or ${\rm Per}_1(E, \interior F) - \Hd^1(\partial^*E \cap \partial F) - 
\int_E \frac{u_0}{\lambda} \dd \Lb^2 \geq 0$ for each $E \subset F$ of finite 
perimeter.
\end{itemize}
By virtue of Lemma \ref{con.calib}, in the second case there exists a vector
field $\bxi_F \in X_F$ such that \eqref{xi.calib} is satisfied. Supplementing
the proof of Theorem \ref{minimization} with this reasoning we obtain that it
holds for $\Omega = \RR$, provided that $u_0 \in PCR_+(\RR)$. By a similar
modification in section \ref{facets}, Theorem \ref{solutionform} also holds
for $\Omega = \RR$ with the same provision on $u_0$. In place of \eqref{asymp}
we get the following estimate on the extinction time $t_n$ after which $u=0$:
\begin{equation}
t_n \leq \frac{\Lb^2(R_0)}{{\rm Per}_1(R_0)} \cdot \ess \max u_0,
\end{equation}
where we denoted by $R_0$ the minimal rectangle containing the support of
$u_0$. Its particulalry simple form as compared to \eqref{asymp} is due to the
fact that $R_0$ clearly minimizes $\frac{{\rm Per}_1(F, \RR)}{\Lb^2(F)}$ among 
bounded
rectilinear polygons subordinate to $G$.

\section{Preservation of continuity in rectangles}\label{regularity}
We start with a lemma concerning $PCR$ functions on a rectangle, which says, 
roughly speaking, that the maximal oscillation on horizontal (or vertical) 
lines, on any given length scale, is not increased by the solution to 
\eqref{1-min} with respect to initial datum $u_0 \in PCR(\Omega)$. To make a 
precise statement, we fix a rectangle $\Omega$ and let $G$ be any grid such 
that $\Omega$ is subordinate to $G$. Further, let $m_1$ ($m_2$) be the number 
of horizontal (vertical) lines of $G$. For any given integer $0\leq m \leq 
m_1-3$ ($m_2-3$) we denote by $\R^2_{1,m}(G)$ ($\R^2_{2,m}(G)$) the set of 
pairs 
of rectangles that lay in
the strip of $\Omega$ between any two successive horizontal (vertical) lines of 
$G$ and are separated by at most $m$ rectangles in
$\R(G)$.

\begin{lemma}
\label{lemmacont}
Let $\Omega$ be a rectangle and let $u$ be the solution to \eqref{1-min} with 
$u_0 
\in PCR(\Omega)$, $\lambda>0$. Let $G$ be a grid such that 
$\Q_{u_0}$ is subordinate to $G$. For $i=1,2$ there holds
\begin{equation}
\label{jumpineq}
 \max_{(R_1, R_2) \in \R^2_{i,m}(G)} \left|u|_{R_1} - u|_{R_2}\right| \leq 
\max_{(R_1, R_2) \in \R^2_{i,m}(G)} \left|u_0|_{R_1} -
u_0|_{R_2}\right|.
\end{equation}
\end{lemma}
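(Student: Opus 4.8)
\emph{Setup and reduction.} By the symmetry between the two coordinate directions it suffices to treat $i=1$; denote by $M$ the right-hand side of \eqref{jumpineq}. By Theorem \ref{minimization}, $u\in PCR(\Omega)$ with $\Q(u)$ subordinate to $G$, and $u$ is the unique minimizer over $BV(\Omega)$ of $J(v)=\int_\Omega|Dv|_1+\tfrac1{2\lambda}\int_\Omega(v-u_0)^2\dd\Lb^2$; in particular every nonempty super-level set $\{u>a\}$ is a rectilinear polygon in $\F(G)$. I argue by contradiction: suppose some pair $(R_1,R_2)\in\R^2_{1,m}(G)$ satisfies, say, $u|_{R_1}-u|_{R_2}>M$, and set $t=u|_{R_1}>s=u|_{R_2}$, so that $t-s>M$. (The case $u|_{R_1}<u|_{R_2}$ follows by applying the result to $-u_0$, whose ROF minimizer is $-u$, since $\R^2_{1,m}(G)$ and $M$ are unchanged under this sign flip.)

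\emph{A competitor via localized truncation.} Let $W$ be the connected component of the rectilinear polygon $\{u>s\}$ that contains $R_1$; since $u|_{R_2}=s$ we have $R_2\not\subseteq\{u>s\}\supseteq W$. For a level $\tau\in(s,t)$ to be chosen, define $\tilde u=\min(u,\tau)$ on $W$ and $\tilde u=u$ on $\Omega\setminus W$. Then: \emph{(a) the total variation does not increase.} By the coarea formula for $|\cdot|_1$-total variation \cite[Remark 4.4]{amarbellettini}, $\int_\Omega|D\tilde u|_1=\int_{\mathbb R}P_1(\{\tilde u>a\},\Omega)\dd a$, with $\{\tilde u>a\}=\{u>a\}$ for $a<\tau$ and $\{\tilde u>a\}=\{u>a\}\setminus W$ for $a\ge\tau$; for $a\ge\tau>s$ the set $\{u>a\}\subseteq\{u>s\}$ meets $W$ in a subset of one connected component of $\{u>s\}$, and since the closures of distinct components of the rectilinear polygon $\{u>s\}$ meet in finitely many points, $P_1(\{u>a\}\setminus W,\Omega)=P_1(\{u>a\},\Omega)-P_1(\{u>a\}\cap W,\Omega)\le P_1(\{u>a\},\Omega)$, so $\int_\Omega|D\tilde u|_1-\int_\Omega|Du|_1=-\int_\tau^{\infty}P_1(\{u>a\}\cap W,\Omega)\dd a\le0$. \emph{(b) The fidelity term strictly decreases, provided $\tau<t$ is chosen with $u_0\le\tau$ on $W\cap\{u>\tau\}$.} Indeed $\tilde u\ne u$ only on $W\cap\{u>\tau\}$, where $(\tilde u-u_0)^2-(u-u_0)^2=(\tau-u)\big((\tau-u_0)+(u-u_0)\big)<0$ because $\tau-u<0$ while $\tau-u_0\ge0$ and $u-u_0>0$ there; as $R_1\subseteq W\cap\{u>\tau\}$ this set has positive measure. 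Hence the existence of such a $\tau$ gives $J(\tilde u)<J(u)$, a contradiction. The symmetric competitor $\tilde u=\max(u,\kappa)$ on the component $Y$ of $\{u<t\}$ containing $R_2$ (and $\tilde u=u$ elsewhere) similarly produces a contradiction whenever there is $\kappa>s$ with $u_0\ge\kappa$ on $Y\cap\{u<\kappa\}$.

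\emph{The remaining, and main, difficulty.} If neither competitor is available, then, writing $W_*$ (resp.\ $Y_*$) for the connected component of $\{u\ge t\}$ (resp.\ $\{u\le s\}$) containing $R_1$ (resp.\ $R_2$), there must exist cells $R_1'\subseteq W_*$ with $u_0|_{R_1'}\ge t$ and $R_2'\subseteq Y_*$ with $u_0|_{R_2'}\le s$; thus $u_0|_{R_1'}-u_0|_{R_2'}\ge t-s>M$. To contradict the definition of $M$ one must upgrade this to a \emph{scale-$m$} pair of cells, i.e.\ extract from $W_*$ and $Y_*$ two cells lying in a common horizontal strip at distance $\le m$ whose $u_0$-values differ by more than $M$. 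This is the delicate step: it should use that $R_1,R_2$ themselves lie in a single strip $S$ at distance $\le m$, that inside a strip the boundaries of the polygons $\{u>a\}$ consist of full-height vertical segments (so that cutting along a vertical grid line between $R_1$ and $R_2$ costs only a controlled amount of perimeter), and that one may start from a counterexample pair chosen extremal — e.g.\ with $u|_{R_1}$ maximal and then $u|_{R_2}$ minimal among all counterexamples. I expect this combinatorial–geometric localization of the above surgery to a window of $\le m$ cells around $R_1\cup R_2$, with the perimeter bookkeeping kept favourable, to be the heart of the argument; once it is available the contradiction is immediate and the lemma follows.
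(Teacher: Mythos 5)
Your proposal does not close: the step you yourself flag as ``the remaining, and main, difficulty'' is precisely the content of the lemma, and without it nothing is proved. The truncation surgery (steps (a) and (b)) only yields, in the failure case, the existence of \emph{some} cells $R_1'$, $R_2'$ with $u_0|_{R_1'}\ge t$ and $u_0|_{R_2'}\le s$; this bounds the oscillation of $u$ at scale $m$ within a row by the \emph{global} oscillation of $u_0$, not by $\max_{(R_1,R_2)\in\R^2_{i,m}(G)}|u_0|_{R_1}-u_0|_{R_2}|$. Since the whole point of \eqref{jumpineq} is the localization to pairs in a common strip at combinatorial distance at most $m$ (this is what makes the modulus-of-continuity Theorem \ref{contpres} work), the argument as written proves only the trivial global bound. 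Moreover, it is not clear your route can be completed cheaply: a localized truncation confined to a window of $\le m$ cells around $R_1\cup R_2$ creates new vertical jump segments whose perimeter cost must be absorbed, and your sketch gives no mechanism for that bookkeeping. (Minor additional points: in (a) the connected components should be handled for the closed rectilinear level sets, where two components cannot share a segment, and the extraction of $R_1'$ as $\tau\uparrow t$ lands it in $W\cap\{u\ge t\}$, not necessarily in the component $W_*$ of $\{u\ge t\}$ containing $R_1$ --- harmless for your weak conclusion, but symptomatic of the missing localization.)

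For comparison, the paper argues quite differently and gets the localization for free. By Theorem \ref{minimization} (and the remark following its proof), each facet $F$ of $u$ optimizes a Cheeger-type quotient, $F_-\in\argmin\J_{F_-,\partial F_-^+,\partial F_-^-,u_0/\lambda}$ and $F_+\in\argmax\check\J_{F_+,\partial F_+^+,\partial F_+^-,u_0/\lambda}$, and formula \eqref{defu} says $u|_{F}$ equals $-\lambda$ times the value of the corresponding quotient at $F$. One then proceeds by induction on $m$: for an extremal pair $(R_+,R_-)\in\R^2_{i,k}(G)$ one tests the two quotients with the \emph{single grid cells} $R_+$ and $R_-$ as competitors; the inductive hypothesis (used to ensure the neighbouring-cell inequalities \eqref{rectineq1}, \eqref{rectineq2}) guarantees that the perimeter and signature contributions of these cells cancel favourably, leaving exactly $u|_{F_+}-u|_{F_-}\le u_0|_{R_+}-u_0|_{R_-}\le M$, since $u_0$ is constant on each cell of $G$ and $(R_+,R_-)$ lies in $\R^2_{i,k}(G)$. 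If you wish to salvage your energy-comparison approach, you would need to supply the localized surgery with controlled perimeter cost that you postpone; as it stands, the proposal has a genuine gap at its heart.
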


\begin{remark}
Taking $m=0$ in Lemma \ref{lemmacont} we obtain that
\[\Hd^1\!\!-\!\ess\max_{J_u} (u_+ - u_-) \leq \Hd^1\!\!-\!\ess\max_{J_{u_0}} 
(u_{0,+} - u_{0,-}).\]
\end{remark}

\begin{proof}
For a given $0 \leq k \leq m_i$ assume we have already proved that 
\eqref{jumpineq} holds for each $0 \leq m < k$. Take any pair of rectangles 
$(R_+, R_-) \in \R^2_{i,k}(G)$
that realizes the maximum in $\left|u|_{R_1} - u|_{R_2}\right|$. Let us take 
rectilinear polygons $F_+, F_-$ in $Q_u$ such that $R_\pm \subset 
F_\pm$.

Now, we  assume that $i=1$ (i.\,e.\;rectangles $R_\pm$
are in the same row of $\R(G)$), $u|_{R_+} > u|_{R_-}$ and $R_-$ is to the left 
of $R_+$. Let us denote by $x_-$ the
maximal value of $x$ coordinate of points in $R_-$ and by $x_+$ the minimal
value of $x$ coordinate of points in $R_+$. Further, let us denote 

\begin{figure}[h]
    \begin{center}
    \includegraphics[width=0.4\textwidth]{./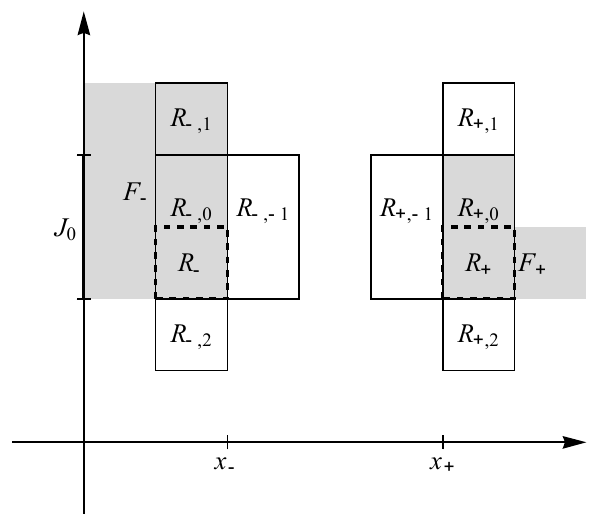} 
    \end{center}
  \caption{Map key for the notation in the proof of Lemma \ref{lemmacont}.}
\end{figure}

\begin{itemize}
 \item by $J_0$ the maximal interval such that $\{x_\pm\}\times J_0 \cap
\partial R_\pm \neq \emptyset$ and $\{x_\pm\}\times J_0 \subset \partial
F_\pm$,
\item by $R_{\pm,0}$ minimal \emph{rectangles} in $\F(G)$ that have 
$\{x_\pm\}\times
J_0$ as one of their sides and contained $R_\pm$,
\item by $R_{+,-1}$ (resp. $R_{-,-1}$) the minimal rectangle in $\F(G)$ that has 
$\{x_+\}\times
J_0$ (resp. $\{x_-\}\times J_0$) as one of its sides and does not contain $R_+$ 
(resp. $R_-$), 
\item by $K$ the number of endpoints of $J_0$ that do not intersect $\partial 
\Omega$ ($K \in \{0,1,2\})$, 
\item by $R_{\pm,j}$, $j \in \mathbb N$, $j \leq K$ the $K$ pairs of rectangles 
in $\R(G)$ such
that
\begin{itemize}
\item all of $R_{+,j}$, have a common side with
$R_{+,0}$ and belong to the same column in $\R(G)$ as $R_+$,
\item all of $R_{-,j}$, have a common side with $R_{-,0}$ and belong to
the same column in $\R(G)$ as $R_-$,
\item for a fixed $j$, both $R_{\pm, j}$ belong to the same row in $\R(G)$.
\end{itemize}
\end{itemize}
Due to the way these are defined, fixing $j\leq K$, at least one of the two
rectangles $R_{\pm, j}$ is not contained in $F_+ \cup F_-$, and  
\begin{equation}
\label{rectineq1}
\text{at least one of inequalities } u|_{R_{+, j}}\!<\!u|_{R_{+,0}}, \; 
u|_{R_{-, j}}\!>\!u|_{R_{-,0}} \text{ hold.}
\end{equation} 

If there is a pair of rectangles $R'_\pm$ in $\R^2_{1, m}(G)$, $m < k$ such
that $R'_\pm \subset F_\pm$ then we have already proved
that \[\left|u|_{R_+} - u|_{R_-}\right| = \left|u|_{R'_+} - u|_{R'_-}\right| 
\leq \max_{(R_1, R_2) \in \R^2_{i,k}} \left|u_0|_{R_1} -
u_0|_{R_2}\right|.\]
Therefore, we can assume that
\begin{equation}
\label{rectineq2}
u|_{R_{+, -1}}\!<\!u|_{R_{+,0}} \text{ and }  u|_{R_{-, -1}}\!>\!u|_{R_{-,0}} 
\text{ hold.}
\end{equation} 

We have
\[F_- \in \argmin \J_{F_-, \partial F_-^+, \partial F_-^-, \frac{u_0}{\lambda}} 
\quad \text{and}
\quad F_+ \in
\argmax \check \J_{F_+, \partial F_+^+, \partial F_+^-, \frac{u_0}{\lambda}}\]
(see the Remarks before the statement of Theorem \ref{minimization} and after 
its proof). Therefore, taking into account (\ref{rectineq1}, \ref{rectineq2}),
\begin{multline}
\label{rectcomp}
u|_{F_+} - u|_{F_-} = - \lambda\left(\check \J_{F_+, \partial
F_+^+, \partial F_+^-, \frac{u_0}{\lambda}}(F_+) - \J_{F_-, \partial F_-^+, 
\partial F_-^-, \frac{u_0}{\lambda}}(F_-)\right) \\
\leq - \lambda\left(\check \J_{F_+, \partial F_+^+, \partial F_+^-, 
\frac{u_0}{\lambda}}(R_+) + \J_{F_-, \partial F_-^+, \partial F_-^-, 
\frac{u_0}{\lambda}}(R_-)\right) \\ \leq \frac{1}{\Lb^2(R_+)} \int_{R_+} u_0 
\dd \Lb^2 - \frac{1}{\Lb^2(R_-)} \int_{R_-} u_0 \dd \Lb^2 \leq \max_{(R_1, R_2) 
\in \R^2_{i,k}} \left|u_0|_{R_1} - u_0|_{R_2}\right|.
\end{multline}
\end{proof}

\begin{thm}
 \label{contpres}
Let $\Omega$ be a rectangle and let $u$ be the solution to \eqref{1-min} with 
$u_0 
\in C(\Omega)$, $\lambda>0$. Then $u \in C(\Omega)$. In fact, if $\omega_1, 
\omega_2 \colon [0, \infty[ \to [0,
\infty[$ are continuous functions such that
\[|u_0(x_1, x_2) - u_0(y_1, y_2)| \leq \omega_1(|x_1-y_1|) + \omega_2(|x_2 -
y_2|)\]
for each $(x_1, x_2)$, $(y_1, y_2)$ in $\Omega$ then we have
\[|u(x_1, x_2) - u(y_1, y_2)| \leq \omega_1(|x_1-y_1|) + \omega_2(|x_2
- y_2|)\]
for each $t>0$,$(x_1, x_2)$, $(y_1, y_2)$ in $\Omega$.
\end{thm}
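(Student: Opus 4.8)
The plan is to deduce the statement from the $PCR$ case, Lemma \ref{lemmacont}, by approximation. First I would note that we may assume $\omega_1,\omega_2$ non-decreasing: replacing $\omega_1$ by $\bar\omega_1(s):=\sup\{|u_0(a,c)-u_0(b,c)|\colon (a,c),(b,c)\in\overline\Omega,\ |a-b|\le s\}$ and $\omega_2$ symmetrically, one still has $|u_0(x_1,x_2)-u_0(y_1,y_2)|\le\bar\omega_1(|x_1-y_1|)+\bar\omega_2(|x_2-y_2|)$ (split the difference through $(y_1,x_2)$), while $\bar\omega_i\le\omega_i$ are continuous and non-decreasing; so it suffices to prove the estimate with $\bar\omega_i$ in place of $\omega_i$. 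In particular $u_0$ is bounded and uniformly continuous.

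Next I would set up the approximation. For each $\eps>0$ such that $\partial\Omega$ is contained in the uniform square grid $G_\eps$ of mesh $\eps$, let $u_{0,\eps}\in PCR(\Omega)$ be the function taking on each $R\in\R(G_\eps)$ the value of $u_0$ at the centre of $R$; then $\Q(u_{0,\eps})$ is subordinate to $G_\eps$ and $u_{0,\eps}\to u_0$ in $L^2(\Omega)$ as $\eps\to0$. Denote by $u_\eps$ the solution to \eqref{1-min} with datum $u_{0,\eps}$. Pairing the Euler--Lagrange relations $\tfrac{u_\eps-u_{0,\eps}}{\lambda}\in-\partial TV_{1,\Omega}(u_\eps)$ and $\tfrac{u-u_0}{\lambda}\in-\partial TV_{1,\Omega}(u)$ with $u_\eps-u$ and using monotonicity of $\partial TV_{1,\Omega}$ gives $\|u_\eps-u\|_{L^2(\Omega)}\le\|u_{0,\eps}-u_0\|_{L^2(\Omega)}$, so $u_\eps\to u$ in $L^2(\Omega)$. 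Now apply Lemma \ref{lemmacont} to $u_\eps$ with the grid $G_\eps$: for any two squares $R_1,R_2$ lying in a common row of $\R(G_\eps)$ and separated by at most $m$ squares, $|u_\eps|_{R_1}-u_\eps|_{R_2}|\le\max|u_{0,\eps}|_{R_1'}-u_{0,\eps}|_{R_2'}|$ over all such pairs; since $u_{0,\eps}$ equals $u_0$ at the centres, which for such a pair share the second coordinate and lie at horizontal distance at most $(m+1)\eps$, the right-hand side is at most $\omega_1((m+1)\eps)$, and symmetrically it is at most $\omega_2((m+1)\eps)$ for pairs in a common column. This uses the index restriction in the definition of $\R^2_{i,m}(G)$ (the pair may not be the extreme pair in its row or column), which is why below I first restrict to points not near opposite ends of $\Omega$.

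To conclude, fix $\bx=(x_1,x_2)$, $\by=(y_1,y_2)$ in $\Omega$ lying off all the grids $G_{\eps_k}$ for a sequence $\eps_k\downarrow0$ along which moreover $u_{\eps_k}\to u$ a.e.; assume first that $|x_1-y_1|$ and $|x_2-y_2|$ are bounded away from the side-lengths of $\Omega$. Going from $\bx$ to $\by$ through $(y_1,x_2)$: in the row of $\R(G_{\eps_k})$ at height $x_2$ the squares of $\bx$ and of $(y_1,x_2)$ are separated by fewer than $|x_1-y_1|/\eps_k$ squares, so the previous step gives $|u_{\eps_k}(\bx)-u_{\eps_k}(y_1,x_2)|\le\omega_1(|x_1-y_1|+\eps_k)$, and likewise $|u_{\eps_k}(y_1,x_2)-u_{\eps_k}(\by)|\le\omega_2(|x_2-y_2|+\eps_k)$. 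Adding these, letting $k\to\infty$, and using $u_{\eps_k}\to u$ a.e.\ together with continuity of $\omega_1,\omega_2$, yields $|u(\bx)-u(\by)|\le\omega_1(|x_1-y_1|)+\omega_2(|x_2-y_2|)$ for a.e.\ such pair, hence, by a further limit in the two distances, for a.e.\ $\bx,\by\in\Omega$. Therefore $u$ agrees a.e.\ with a uniformly continuous function on $\overline\Omega$ satisfying the stated inequality everywhere; in particular $u\in C(\Omega)$. I expect the main obstacle to be the bookkeeping of the middle paragraph — translating Lemma \ref{lemmacont}'s combinatorial "separated by at most $m$ rectangles" into the Euclidean distance $|x_1-y_1|$ on the uniform grid while respecting the index bound on $m$, which forces the auxiliary limit over interior points; nonexpansiveness of the solution map, a.e.\ convergence along a subsequence, and continuity of the $\omega_i$ are then routine.
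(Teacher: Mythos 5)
Your proposal is correct and follows essentially the same route as the paper's own proof: discretize $u_0$ on uniform grids to obtain $PCR$ data, apply Lemma \ref{lemmacont} to the corresponding solutions of \eqref{1-min}, use the $L^2$-nonexpansiveness coming from monotonicity of $-\partial TV_{1,\Omega}$, and pass to the limit along an a.e.-convergent subsequence, recovering the estimate at all points by approximation. Your two extra steps --- replacing $\omega_1,\omega_2$ by their monotone envelopes and the auxiliary restriction to pairs of points away from the extreme separation before a final limit --- are harmless refinements of points the paper treats implicitly (its inequality \eqref{initialcont} already presumes non-decreasing moduli).
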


\begin{remark}
Note that if $\omega$ is a concave modulus of continuity for $u_0$ with respect
to norm $|\cdot|_1$, then $\omega_1, \omega_2$ defined by
$\omega_1=\omega_2=\omega$ satisfy the assumptions of Theorem \ref{contpres}.
On the other hand, given $\omega_1, \omega_2$ as in the Theorem, $\omega' =
\omega_1 + \omega_2$ is a modulus of continuity for $u_0$ (as well as 
$u$). Theorem \ref{contpres} implies for instance that if $L$ is the Lipschitz
constant for $u_0$ with respect to norm $|\cdot|_1$, then the Lipschitz
constant of $u$ with respect to norm $|\cdot|_1$ is not greater than
$L$.
\end{remark}

\begin{proof}
We denote $\Omega = (s_1,s_2) + [0,l_1]\times[0, l_2]$. For $k=1,2,\ldots$ let
\[G_k = \left(\left\{s_1 +
\frac{j l_1}{k}, j = 0,1,\ldots,k\right\} \times \mathbb R\right) \cup
\left(\mathbb R \times \left\{s_2 + \frac{j l_2}{k}, j =
0,1,\ldots,k\right\}\right)\]
and let $u_{0, k} \in PCR(\Omega)$ be
defined by
\[u_{0, k}(\bx) = u_0(\bx_R) \quad \text{for } \bx \in R \in
\R(G_k),\]
where $\bx_R$ is the center of $R$.

For any $k=1,2,\ldots$, $i=1,2$, $m=0,\ldots,k-1$, let $(R,R') \in
\R^2_{i,m}(G_k)$ with $(x_1, x_2) \in R, (y_1, y_2) \in R'$
we have
\begin{equation}
\label{initialcont}
 \left|u_{0, k}|_{R} - u_{0, k}|_{R'}\right| \leq \omega_i(|x_i -
y_i| + \tfrac{1}{k}).
\end{equation}

Let us denote by $u_k$ the solution to \eqref{1-min} with datum $u_{0, k}$. Due 
to inequality \eqref{initialcont} and Lemma
\ref{lemmacont} we have for any $(x_1, x_2), (y_1, y_2) \in \Omega$,
\begin{multline*}
|u_k(x_1,x_2) - u_k(y_1, y_2)| \leq |u_k(x_1,x_2)
- u_k(y_1, x_2)| + |u_k(y_1,x_2) - u_k(y_1, y_2)| \\ \leq
\omega_1(|x_1 - y_1| + \tfrac{1}{k}) + \omega_2(|x_2 - y_2| + \tfrac{1}{k}).
\end{multline*}

Now, due to monotonicity of $- \partial TV_{1,\Omega}$, we have
$\|u_k - u\|_{L^2(\Omega)}\leq \|u_{0, k} - u_0\|_{L^2(\Omega)}.$
Therefore, there exists a set $N \subset \Omega$ of zero $\Lb^2$ measure and a 
subsequence $(k_j)$ such that
$ u_{k_j}(\bx) \to u(\bx) \quad \text{for all } \bx \in \Omega\setminus N.$
Now, for each pair $(x_1, x_2), (y_1, y_2) \in \Omega$ take any pair of
sequences $((x_{1,n}, x_{2,n})), ((y_{1,n}, y_{2,n})) \subset \Omega\setminus N$ 
such that $x_{i,n} \to x_i$ and $y_{i,n} \to
y_i$. Passing to the limit $j \to \infty$ and then $n \to \infty$ in
\[ |u_{k_j}(x_{1,n}, x_{2,n}) - u_{k_j}(y_{1,n}, y_{2,n})| \leq
\omega_1(|x_{1,n} - y_{1,n}| + \tfrac{1}{k_j}) + \omega_2(|x_{2,n} - y_{2,n}| + 
\tfrac{1}{k_j})\]
we conclude the proof.
\end{proof}

Analogous results can be obtained for the solutions to \eqref{1-tvflow}. 

\begin{lemma}
\label{lemmacontf}
Let $\Omega$ be a rectangle, let $u$ be the solution to \eqref{1-tvflow} 
with $u_0 \in PCR(\Omega)$ and let $G$ be a grid such that $\Q_{u_0}$ is 
subordinate to $G$. Then for $i=1,2$ there holds
\begin{equation}
\label{jumpineqf}
 \max_{(R_1, R_2) \in \R^2_{i,m}(G)} \left|u(t, \cdot)|_{R_1} - u(t,
\cdot)|_{R_2}\right| \leq \max_{(R_1, R_2) \in \R^2_{i,m}} \left|u_0|_{R_1} -
u_0|_{R_2}\right|
\end{equation}
in any time instance $t>0$.
\end{lemma}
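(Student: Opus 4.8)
The plan is to mimic the proof of Lemma~\ref{lemmacont}, the only genuinely new ingredient being a reduction --- via continuity and piecewise linearity in time --- from a pointwise comparison of velocities to the stated inequality at every instant. First I would apply Theorem~\ref{solutionform}: since $\Q(u_0)$ is subordinate to $G$ we have $G(u_0)\subseteq G$, hence $u(t,\cdot)\in PCR(\Omega)$ with $\Q(u(t,\cdot))$ subordinate to $G$ for all $t>0$, and there are $0=t_0<t_1<\dots<t_n$, partitions $\Q_k$ and consistent signatures $\S_k$ for which \eqref{ut} holds on $]t_k,t_{k+1}[$. Consequently, for each $R\in\R(G)$ the quantity $u(t,\cdot)|_R=\tfrac{1}{\Lb^2(R)}\int_R u(t,\cdot)\dd\Lb^2$ is well defined, depends continuously on $t$ (as $u\in C([0,\infty[,L^2(\Omega))$), and is affine on each $]t_k,t_{k+1}[$ by \eqref{ut}. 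Fixing $i$ and $m$, the function
\[g(t):=\max_{(R_1,R_2)\in\R^2_{i,m}(G)}\bigl|u(t,\cdot)|_{R_1}-u(t,\cdot)|_{R_2}\bigr|\]
is therefore continuous and piecewise affine, extends continuously to $t=0$ with $g(0)=\max_{(R_1,R_2)\in\R^2_{i,m}(G)}|u_0|_{R_1}-u_0|_{R_2}|$, and vanishes for $t\ge t_n$; so it suffices to prove that $g$ is non-increasing, for which (being locally Lipschitz) it is enough that $g'(t)\le 0$ for $t$ outside the finite set consisting of $\{t_1,\dots,t_n\}$ together with the instants at which two of the maximizing affine branches cross.

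At such a $t$, lying in some $]t_k,t_{k+1}[$, the maximum is realized by a single pair $(R_+,R_-)\in\R^2_{i,m}(G)$; after relabeling $u(t,\cdot)|_{R_+}>u(t,\cdot)|_{R_-}$, the slope of $g$ at $t$ equals $u_t|_{R_+}-u_t|_{R_-}$ by \eqref{ut}, so I must show $u_t|_{R_+}\le u_t|_{R_-}$. Let $F_\pm$ be the facets containing $R_\pm$. By \eqref{ut}, $u_t|_F=-\J_{F,\partial F^+,\partial F^-,0}(F)=-\check\J_{F,\partial F^+,\partial F^-,0}(F)$ for every facet $F$ (the two functionals coincide on $F$ itself since $P_1(F,\interior F)=0$). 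Since $u_t=\div\z(t,\cdot)$ is uniquely determined (Theorem~\ref{divz}(1)), the facet containing $R_-$ and the facet containing $R_+$ may be taken from possibly different admissible facet decompositions of $u(t,\cdot)$; arguing as in the proofs of Theorems~\ref{z} and \ref{minimization} and invoking the remark after the proof of Theorem~\ref{z}, we may thus assume $F_-$ minimizes $\J_{F_-,\partial F_-^+,\partial F_-^-,0}$ while $F_+$ maximizes $\check\J_{F_+,\partial F_+^+,\partial F_+^-,0}$. Hence
\[u_t|_{R_+}-u_t|_{R_-}=\J_{F_-,\partial F_-^+,\partial F_-^-,0}(F_-)-\check\J_{F_+,\partial F_+^+,\partial F_+^-,0}(F_+)\le\J_{F_-,\partial F_-^+,\partial F_-^-,0}(R_-)-\check\J_{F_+,\partial F_+^+,\partial F_+^-,0}(R_+),\]
and it remains to check $\J_{F_-,\partial F_-^+,\partial F_-^-,0}(R_-)\le\check\J_{F_+,\partial F_+^+,\partial F_+^-,0}(R_+)$.

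This last step is exactly the rectangle comparison carried out in the proof of Lemma~\ref{lemmacont}, specialized to $f=0$: one introduces the facing sides of $R_\pm$, the maximal interval $J_0$, the auxiliary rectangles $R_{\pm,0}$, $R_{\pm,-1}$, $R_{\pm,j}$ and the count $K$ of free endpoints of $J_0$, uses that $(R_+,R_-)$ realizes the maximum in $g(t)$ to derive the analogues of \eqref{rectineq1} and \eqref{rectineq2}, and concludes
\[\J_{F_-,\partial F_-^+,\partial F_-^-,0}(R_-)\le\tfrac{2}{\Lb^1(J_0)}\le\check\J_{F_+,\partial F_+^+,\partial F_+^-,0}(R_+).\]
Then $g(t)=g(0)+\int_0^t g'(s)\dd s\le g(0)$, which is \eqref{jumpineqf}. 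The part I expect to require the most care is the bookkeeping in the previous paragraph rather than the (already-written) geometry: namely, phrasing precisely the simultaneous use of ``$F_-$ minimizes $\J_{F_-,\cdot,0}$'' and ``$F_+$ maximizes $\check\J_{F_+,\cdot,0}$'' when $R_+$ and $R_-$ lie in facets of possibly different admissible decompositions of the same $u(t,\cdot)$, and making sure the exceptional set of times (the merging times together with the finitely many crossings of maximizing affine branches) is finite so that the fundamental-theorem-of-calculus step genuinely applies.
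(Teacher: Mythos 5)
Your overall route coincides with the paper's: by Theorem \ref{solutionform} the function $t\mapsto\max_{(R_1,R_2)\in\R^2_{i,m}(G)}\left|u(t,\cdot)|_{R_1}-u(t,\cdot)|_{R_2}\right|$ is continuous and piecewise affine, so everything reduces to a non-positive slope at times outside a finite exceptional set; at a non-merging time the facet speeds are the Cheeger quotients, $F_-$ minimizes $\J_{F_-,\partial F_-^+,\partial F_-^-,0}$ and $F_+$ maximizes $\check\J_{F_+,\partial F_+^+,\partial F_+^-,0}$, and one concludes with the rectangle comparison of Lemma \ref{lemmacont}. (Your phrase about ``different admissible facet decompositions'' is muddled --- the facets of $u(t,\cdot)$ are just $\Q(u(t,\cdot))$; the paper gets the argmin/argmax property simply from the fact that $t$ is not a merging time together with the remark after Theorem \ref{z} and uniqueness of $\div\z(t,\cdot)$ --- but this is a matter of phrasing, not substance.)

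There is, however, one genuine gap: you assert that the maximality of $(R_+,R_-)$ in $\R^2_{i,m}(G)$ by itself yields the analogues of \eqref{rectineq1} \emph{and} \eqref{rectineq2}. That is not how the paper obtains \eqref{rectineq2}, and it is not true in general. Both in Lemma \ref{lemmacont} and in the paper's proof of this lemma the argument is an induction over the separation parameter: one assumes the claim (here, non-positivity of the slope) already established for every $m<k$, and this hypothesis is used precisely to dispose of the case in which the facets $F_\pm$ contain a pair of rectangles separated by fewer than $k$ cells; only in the remaining case can \eqref{rectineq2} be assumed. If that case is not excluded, $F_+$ (say) may extend into the gap between $R_+$ and $R_-$ within the same strip; then the side of $R_+$ facing $R_-$ lies in the interior of $F_+$, the interval $J_0$ and the rectangles $R_{\pm,0}$, $R_{\pm,-1}$ are not even defined as in Lemma \ref{lemmacont}, and maximality --- which only compares cell values within the same strip --- gives no sign information on the relevant portion of the signature of $\partial F_\pm$. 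Your proof as written therefore does not cover this configuration. It can be repaired either by reinstating the paper's induction on $m$, or by first replacing $(R_+,R_-)$ with the closest pair of cells of $F_+$ and $F_-$ lying in the same strip (this pair is still in $\R^2_{i,m}(G)$ and realizes the same maximum, and afterwards the facing sides do lie on $\partial F_\pm$); one of these devices has to be made explicit.
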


\begin{proof}
The form of solution obtained in Theorem \ref{solutionform} implies that the
function
\[t \mapsto \max_{(R_1, R_2) \in \R^2_{i,m}(G)} \left|u(t, \cdot)|_{R_1} -
u(t, \cdot)|_{R_2}\right|\]
is piecewise linear and continuous, in particular
it does not have jumps. Having this observation in mind, let us consider time 
instance
$\tau > 0$ that does not belong to the set of merging times $\{t_1, \ldots,
t_n\}$.

For a given $0 \leq k \leq m_i$ assume we have already proved that
the slope of
\[t \mapsto \max_{(R_1, R_2) \in \R^2_{i,m}(G)} \left|u(t,
\cdot)|_{R_1} - u(t, \cdot)|_{R_2}\right|\]
is non-positive in $t=\tau$ for
each $0 \leq m < k$. Take any pair of rectangles $(R_+, R_-) \in \R^2_{i,k}(G)$
that realizes the maximum in $\left|u(t, \cdot)|_{R_1} - u(t,
\cdot)|_{R_2}\right|$. Let us take rectilinear polygons $F_+, F_-$ in $\Q_{u(t,
\cdot)}$ such that $R_\pm \subset F_\pm$.

Then, reasoning as in the proof of Lemma \ref{lemmacont}, we obtain
\begin{multline}
\label{rectcompf}
u_t(\tau, \cdot)|_{F_+} - u_t(\tau, \cdot)|_{F_-} = - \check \J_{F_+, \partial
F_+^+,
\partial F_+^-}(F_+) + \J_{F_-, \partial F_-^+, \partial F_-^-}(F_-) \\
\leq - \check \J_{F_+, \partial F_+^+, \partial
F_+^-}(R_+) + \J_{F_-, \partial F_-^+, \partial F_-^-}(R_-) \leq 0
\end{multline}
which concludes the proof. 
\end{proof}

Now, note that if $u$ and $v$ are two solutions to \eqref{atv} with 
$L^2(\Omega)$ initial data $u_0$ and $v_0$ respectively, we have for each time 
instance $t>0$ (see \cite{moll}, Theorem 11.)
\[ \|u(t, \cdot) - v(t, \cdot)\|_{L^2(\Omega)}\leq \|u_0 - v_0\|_{L^2(\Omega)} 
.\]
Using this fact and Lemma \ref{lemmacontf}, we can obtain the following analog 
of Theorem \ref{contpres} for solutions of \eqref{atv}. The proof is almost 
identical and we omit it. 
\begin{thm}
 \label{contpresf}
 Let $\Omega$ be a rectangle and let $u$ be the solution to \eqref{atv} with 
initial datum $u_0 \in C(\Omega)$. Then $u(t, \cdot) \in C(\Omega)$ in
every $t>0$. In fact, if $\omega_1, \omega_2 \colon [0, \infty[ \to [0,
\infty[$ are continuous functions such that
\[|u_0(x_1, x_2) - u_0(y_1, y_2)| \leq \omega_1(|x_1-y_1|) + \omega_2(|x_2 -
y_2|)\]
for each $(x_1, x_2)$, $(y_1, y_2)$ in $\Omega$ then we have
\[|u(t,(x_1, x_2)) - u(t,(y_1, y_2))| \leq \omega_1(|x_1-y_1|) + \omega_2(|x_2
- y_2|)\]
for each $t>0$,$(x_1, x_2)$, $(y_1, y_2)$ in $\Omega$.
\end{thm}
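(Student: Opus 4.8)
The plan is to transcribe the proof of Theorem \ref{contpres}, replacing Lemma \ref{lemmacont} by Lemma \ref{lemmacontf} and the $L^2$-contractivity of the resolvent of $-\partial TV_{1,\Omega}$ by the $L^2$-contractivity of the semigroup recalled just above. Write $\Omega = (s_1,s_2) + [0,l_1]\times[0,l_2]$ and, for $k = 1,2,\ldots$, set
\[ G_k = \left(\left\{s_1 + \tfrac{j l_1}{k} \colon j = 0, \ldots, k\right\} \times \mathbb R\right) \cup \left(\mathbb R \times \left\{s_2 + \tfrac{j l_2}{k} \colon j = 0, \ldots, k\right\}\right), \]
and let $u_{0,k} \in PCR(\Omega)$ be defined by $u_{0,k}(\bx) = u_0(\bx_R)$ for $\bx \in R \in \R(G_k)$, where $\bx_R$ is the centre of $R$. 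As in the proof of Theorem \ref{contpres}, the hypothesis on $\omega_1,\omega_2$ (together with uniform continuity of $u_0$, which it entails) gives, for every $k$, every $i \in \{1,2\}$, every $0 \le m \le k-1$ and every $(R,R') \in \R^2_{i,m}(G_k)$ with $(x_1,x_2) \in R$, $(y_1,y_2) \in R'$, the bound $\bigl|u_{0,k}|_R - u_{0,k}|_{R'}\bigr| \le \omega_i\!\left(|x_i - y_i| + \tfrac1k\right)$.

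Next I would fix $t > 0$ and let $u_k$ denote the global strong solution to \eqref{atv} with initial datum $u_{0,k}$; since $u_{0,k} \in PCR(\Omega) \subset \D(\partial TV_{1,\Omega})$, Theorem \ref{solutionform} applies and Lemma \ref{lemmacontf} is available for $u_k$. Applying Lemma \ref{lemmacontf} at time $t$ for $i=1$ and $i=2$ and composing one horizontal with one vertical displacement via the triangle inequality exactly as in the proof of Theorem \ref{contpres}, I obtain, for all $(x_1,x_2),(y_1,y_2) \in \Omega$,
\[ |u_k(t,(x_1,x_2)) - u_k(t,(y_1,y_2))| \le \omega_1\!\left(|x_1 - y_1| + \tfrac1k\right) + \omega_2\!\left(|x_2 - y_2| + \tfrac1k\right). \]
Now use $\|u_k(t,\cdot) - u(t,\cdot)\|_{L^2(\Omega)} \le \|u_{0,k} - u_0\|_{L^2(\Omega)}$ together with the fact that the right-hand side tends to $0$ (as $u_{0,k} \to u_0$ uniformly): along a subsequence, which I relabel $(k)$, there is a $\Lb^2$-null set $N \subset \Omega$ with $u_k(t,\bx) \to u(t,\bx)$ for every $\bx \in \Omega \setminus N$. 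Finally, for arbitrary $(x_1,x_2),(y_1,y_2) \in \Omega$, choose sequences in $\Omega \setminus N$ converging to them, pass to the limit $k \to \infty$ in the last display and then let the approximating sequences converge, using continuity of $\omega_1,\omega_2$; this yields $|u(t,(x_1,x_2)) - u(t,(y_1,y_2))| \le \omega_1(|x_1-y_1|) + \omega_2(|x_2-y_2|)$, and in particular $u(t,\cdot) \in C(\Omega)$.

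I do not expect a genuine obstacle here: the argument is a routine transcription. The one step that warrants a word of justification is the application of Lemma \ref{lemmacontf} at the fixed but arbitrary time $t$, which may coincide with one of the finitely many merging times of $u_k$; this is harmless because, as observed at the start of the proof of Lemma \ref{lemmacontf}, the relevant maximal-oscillation functional is continuous in time, so \eqref{jumpineqf} holds for \emph{every} $t > 0$. A second, purely bookkeeping, point is that the exceptional null set $N$ depends on $t$, which causes no difficulty since the conclusion is asserted pointwise in $t$.
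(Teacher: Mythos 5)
Your proposal is correct and follows essentially the same route as the paper: the authors explicitly omit the proof of Theorem \ref{contpresf}, stating it is almost identical to that of Theorem \ref{contpres}, with Lemma \ref{lemmacontf} replacing Lemma \ref{lemmacont} and the $L^2$-contractivity of the semigroup replacing that of the resolvent — exactly the transcription you carry out. Your added remarks (validity of \eqref{jumpineqf} at every $t>0$ including merging times, and the $t$-dependence of the null set $N$) are harmless clarifications consistent with the intended argument.
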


Finally, we note that all the results in this section carry over in a 
straightforward way to the case $\Omega = \RR$, provided that in the statements 
of Theorems \ref{contpres} and \ref{contpresf} $C(\Omega)$ is replaced with 
$C_{c, +}(\RR)$ (meaning non-negative, compactly supported continuous 
functions on $\RR$). On the other hand, if $\Omega$ is a rectilinear polygon 
different from a rectangle, the continuity is not necessarily preserved as 
Example \ref{excrack} shows. 

\section{Examples}\label{examples}
We start with the following general fact showing that minimizing \eqref{AROF}
and solving \eqref{atv} is equivalent in some cases.
\begin{thm}
 \label{equiv}
 Let $\Omega$ be a bounded domain or $\Omega = \RR$ and let $u$ be the strong
solution to \eqref{atv} in $[0,T[$ with initial datum $u_0 \in L^2(\Omega)$. If
there exists $\z \in L^\infty(]0,T[\times\Omega, \RR)$ satisfying conditions
(\ref{zeqn}-\ref{zint}) such that for some $0< \lambda <T$ and almost all
$0<t<\lambda$
 \begin{equation}
  \label{assequiv}
  \int_{\Omega} (\z(t, \cdot), Du(\lambda, \cdot)) = \int_{\Omega}
|Du(\lambda,\cdot)|_\varphi.
 \end{equation}
 Then the minimizer of \eqref{AROF} with $\lambda>0$ is given by
$u_\lambda=u(\lambda, \cdot)$.
\end{thm}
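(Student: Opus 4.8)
The plan is to produce a calibration for the ROF minimizer $u_\lambda$ of \eqref{AROF} by \emph{averaging in time} the field $\z$ supplied by the flow. Concretely, set $\bar\bxi(\bx) := \tfrac1\lambda\int_0^\lambda \z(t,\bx)\,\dd t$. I expect that $\bar\bxi \in X_{\varphi,\Omega}\big(u(\lambda,\cdot)\big)$ with $\div\bar\bxi = \tfrac1\lambda\big(u(\lambda,\cdot)-u_0\big)$; granting this, Theorem \ref{ch_subd} gives $\tfrac{u(\lambda,\cdot)-u_0}{\lambda}\in-\partial TV_{\varphi,\Omega}(u(\lambda,\cdot))$, that is, $u(\lambda,\cdot)$ solves the Euler--Lagrange equation \eqref{el} in its general $\varphi$ form, and strict convexity of the functional in \eqref{AROF} forces $u_\lambda = u(\lambda,\cdot)$.

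First I would observe that for a.e.\ $t\in\,]0,\lambda[$ the hypothesis \eqref{assequiv} is exactly the statement $\z(t,\cdot)\in X_{\varphi,\Omega}(u(\lambda,\cdot))$: indeed $|\z(t,\cdot)|^*_\varphi\le1$ a.e.\ and $[\z(t,\cdot),\norm^\Omega]=0$ by (\ref{zselect}) and the Neumann condition in Definition \ref{defatvf}, while the Gauss--Green formula for $L^2$-divergence fields paired with $BV\cap L^2$ functions, with vanishing normal trace, turns \eqref{assequiv} into $-\int_\Omega u(\lambda,\cdot)\,\div\z(t,\cdot)=\int_\Omega|Du(\lambda,\cdot)|_\varphi$. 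Since the three conditions in \eqref{domain} amount to two convex constraints and one affine identity in the field, $X_{\varphi,\Omega}(w)$ is convex and stable under integral averages, so on the conceptual level averaging $\z(t,\cdot)$ over $]0,\lambda[$ cannot leave $X_{\varphi,\Omega}(u(\lambda,\cdot))$; moreover $\tfrac1\lambda\int_0^\lambda\div\z(t,\cdot)\,\dd t=\tfrac1\lambda\int_0^\lambda u_t(t,\cdot)\,\dd t=\tfrac1\lambda(u(\lambda,\cdot)-u_0)$, which is the divergence we want. The real work is to make this averaging rigorous.

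To do so I would avoid Bochner integration in $L^\infty$ and test against fixed functions. The field $\bar\bxi$ is well defined by Fubini (as $\z\in L^\infty\subset L^1_{loc}$), with $\|\bar\bxi\|_{L^\infty}\le\|\z\|_{L^\infty}$ and, by Jensen and convexity of $|\cdot|^*_\varphi$, $|\bar\bxi|^*_\varphi\le1$ a.e. For $\rho\in C^\infty_c(\Omega)$ the scalar map $t\mapsto\int_\Omega u(t,\cdot)\rho$ is continuous on $[0,\lambda]$ (since $u\in C([0,T[,L^2(\Omega))$) and on $]0,\lambda]$ its a.e.\ derivative is, by (\ref{zeqn}), $\int_\Omega\rho\,\div\z(t,\cdot)=-\int_\Omega\z(t,\cdot)\cdot\nabla\rho$, which is bounded by $\|\z\|_{L^\infty}\|\nabla\rho\|_{L^1}$ uniformly in $t$; hence the map is Lipschitz on $[0,\lambda]$, and integrating the identity and applying Fubini gives $\lambda\,\langle\div\bar\bxi,\rho\rangle=\int_\Omega\big(u(\lambda,\cdot)-u_0\big)\rho$, i.e.\ $\lambda\div\bar\bxi=u(\lambda,\cdot)-u_0\in L^2(\Omega)$, so $\bar\bxi\in X_\Omega$. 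The same computation with $\phi\in C^\infty(\overline\Omega)$ (now using $[\z(t,\cdot),\norm^\Omega]=0$ in Gauss--Green) shows $\int_{\partial\Omega}[\bar\bxi,\norm^\Omega]\phi=0$ for all such $\phi$, hence $[\bar\bxi,\norm^\Omega]=0$ $\Hd^1$-a.e.\ on $\partial\Omega$ (this is void when $\Omega=\RR$). Finally, taking the test function to be $w:=u(\lambda,\cdot)$, the map $t\mapsto\int_\Omega w\,u(t,\cdot)$ is again Lipschitz, its a.e.\ derivative being $\int_\Omega w\,\div\z(t,\cdot)=-\int_\Omega(\z(t,\cdot),Dw)$ (Gauss--Green, zero normal trace), bounded by $\|\z\|_{L^\infty}|Dw|(\Omega)$; integrating over $]0,\lambda[$ and using \eqref{assequiv} yields $-\int_\Omega w\,\div\bar\bxi=\int_\Omega|Du(\lambda,\cdot)|_\varphi$.

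At this point $\bar\bxi$ satisfies all of \eqref{domain} with $w=u(\lambda,\cdot)$, so $\div\bar\bxi\in-\partial TV_{\varphi,\Omega}(u(\lambda,\cdot))$ by Theorem \ref{ch_subd}; combined with $\div\bar\bxi=\tfrac1\lambda(u(\lambda,\cdot)-u_0)$ this is precisely the Euler--Lagrange characterization of the unique minimizer of \eqref{AROF} with parameter $\lambda$, whence $u_\lambda=u(\lambda,\cdot)$. The main obstacle, as indicated, is exactly this ``averaging of the calibration in time'': showing that $\bar\bxi$ has the correct divergence, normal trace, and Anzellotti pairing with $Du(\lambda,\cdot)$, which I would dispatch by the ``test against a fixed function $+$ Lipschitz-in-time $+$ Fubini'' device above. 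A minor point worth a sentence is that $u(\lambda,\cdot)\in BV(\Omega)$: this is implicit in the hypothesis (the pairings in \eqref{assequiv} require it) and in any case follows from the regularizing effect of the flow for $t>0$.
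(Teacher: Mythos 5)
Your proposal is correct and follows essentially the same route as the paper: define the time-averaged field $\frac{1}{\lambda}\int_0^\lambda \z(t,\cdot)\,\dd t$, check that it lies in $X_{\varphi,\Omega}(u(\lambda,\cdot))$ using \eqref{assequiv}, and conclude via the Euler--Lagrange equation \eqref{el}. The paper states these verifications as immediate, while you spell out the Fubini/testing and Jensen details, but the argument is the same.
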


\begin{proof}
Let $\z_\lambda = \frac{1}{\lambda} \int_0^\lambda \z(t, \cdot) \dd t$. Clearly,
$\z_\lambda \in X_\Omega(u_\lambda)$ satisfies $[\z_\lambda, \norm^\Omega] = 
0$ and $|\z_\lambda|^*_\varphi \leq 1$. Furthermore, by virtue of 
\eqref{assequiv},
\begin{equation}
\label{eqanzmeasureelip}
\int_{\Omega} (\z_\lambda,D u_\lambda) = \int_{\Omega} |D u_\lambda|_\varphi,
\end{equation}
so $\z_\lambda \in X_{\varphi, \Omega}(u_\lambda)$. Finally, \eqref{zeqn} 
implies
\begin{equation}
\label{eqdisteliptic}
u_\lambda-u_0=\lambda \div \z_\lambda \quad \text{ in } \mathcal D'(\Omega).
\end{equation}
\end{proof}
One class of solutions to \eqref{atv} such that \eqref{assequiv} holds, are
$PCR$ solutions constructed in Theorem \eqref{solutionform} up to the
first (positive) breaking time, as defined in the following
\begin{defn}
  Let $u \in W^{1,\infty}([0,\infty[, BV(\Omega))$ be the global strong
solution to
\eqref{1-tvflow} with $u_0\in PCR(\Omega)$ and let $0<t_1<\ldots t_n$ be the
sequence of time instances obtained in Theorem
\ref{solutionform}. We call each of $t_1,
\ldots, t_n$ a \emph{merging time}. We say that $t_i$, $i=1,\ldots,n$ is a
\emph{(positive) breaking time} if $\Hd^1(J_{u(t,\cdot)}\setminus J_{u(t_i,
\cdot)}) > 0$ for $t \in ]t_i, t_{i+1}[$.
\end{defn}
Indeed, let $t_k>0$ be the first breaking time and $0 < t < \lambda \leq
t_k$. Then $J_{u(\lambda, \cdot)} \subset J_{u(t, \cdot)}$ up to a
$\Hd^1$-null
set and $\frac{Du(\lambda, \cdot)}{|Du(\lambda, \cdot)|} = \frac{Du(t,
\cdot)}{|Du(t, \cdot)|}$ holds $|Du(\lambda, \cdot)|$-a.\,e.,\;which implies
\eqref{assequiv}.

Now we provide several examples illustrating the strength of our results. Note
that even though they are formulated in the language of the flow, in all of
them $]0, \lambda[ \ni t \mapsto \z(t, \bx)$ is constant
a.\,e.\;for $|Du(\lambda,\cdot)|$-almost every $\bx \in \Omega$ which implies
\eqref{assequiv}, and therefore they are also solutions to \eqref{1-min}.

 Theorem \ref{solutionform} predicts that the jump set of a function piecewise
constant on rectangles may expand under the $TV_1$ flow, i.\,e.\;facet
breaking may occur. Many explicit examples of this kind can be
constructed. Here we present a simple one, for which the procedure described in 
the proof of Theorem \ref{z} is concise enough to be presented in detail.  
\begin{ex}
\label{exbreaking}
Let
\[u(t,\cdot) = \left(1 - \tfrac{4}{3}t\right)_+\chi_B + \left(1 - 2
t\right)_+ \chi_C,\]
where we denoted 
\[B = B_\infty\left((0,0),\tfrac{3}{2}\right), \quad C = 
B_\infty\left((2,0), \tfrac{1}{2}\right) \cup
B_\infty\left((-2,0), \tfrac{1}{2}\right) \cup B_\infty\left((0,2),
\tfrac{1}{2}\right) \cup B_\infty\left((0,-2), \tfrac{1}{2}\right).\] 
For each 
$t \geq 0$, $u(t, \cdot) \in PCR_+(\RR)$ and $u$ solves \eqref{1-tvflow} 
with initial datum $u_0 = \chi_{B \cup C}$. To see this, we 
execute the algorithm described in Theorem \ref{z}. Let $Q_1 = u_0^{-1}(1) = B 
\cup C$. Due to symmetry, the only plausible largest minimizers of 
$\J_{Q_1, \partial Q_1, \emptyset, 0}$ are $B$, $C$ and $B \cup C$ (we only 
need to consider elements of $\F_{u_0}$ and no subset of square $B$ can 
produce lower value of the functional than $B$). We check that the values of 
$\J_{Q_1, \partial Q_1, \emptyset, 0}$ on these sets are, respectively, 
$\frac{4}{3}$, $4$, and $\frac{20}{13}$, hence $B$ 
is the minimizer and the initial velocity on $B$ is $-\frac{4}{3}$. Next, we 
have to find the largest minimizer of $\J_{C, \partial Q_1, \partial B, 0}$. 
There is only one competitor, $C$. To find initial velocity on $C$, we 
calculate 
$- \J_{C, \partial Q_1, \partial B, 0}(C) = -2$. Finally, 
as explained in section \ref{cauchy}, we need to find the largest minimizer 
of $\J_{Q_0, \partial R_0, \partial Q_1, 0}$, where we denoted $R_0$ to be the 
smallest rectangle (square) containing the support of $u_0$ and $Q_0 = R_0 \cap 
u_0^{-1}(0)$. We check that the minimizer is $Q_0$ itself, with $\J_{Q_0, 
\partial R_0, \partial Q_1, 
0}(Q_0) =0$.  

\begin{figure}[h]
    \centering
    \begin{subfigure}{0.3\textwidth}
        \includegraphics[width=\textwidth]{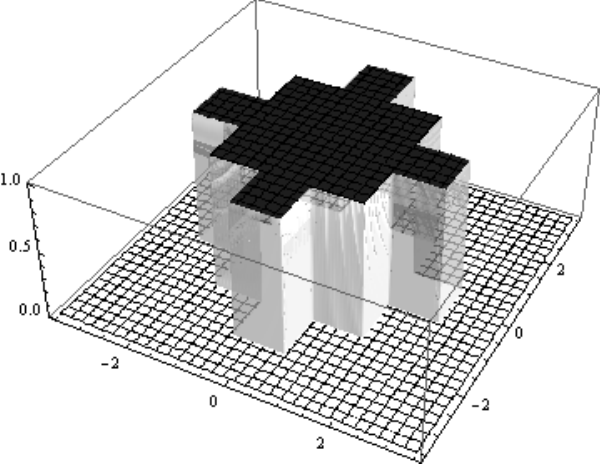}
        \caption{$t=0$.}
        \label{fig:exbreakingt0}
    \end{subfigure}
    \;
    \begin{subfigure}{0.3\textwidth}
        \includegraphics[width=\textwidth]{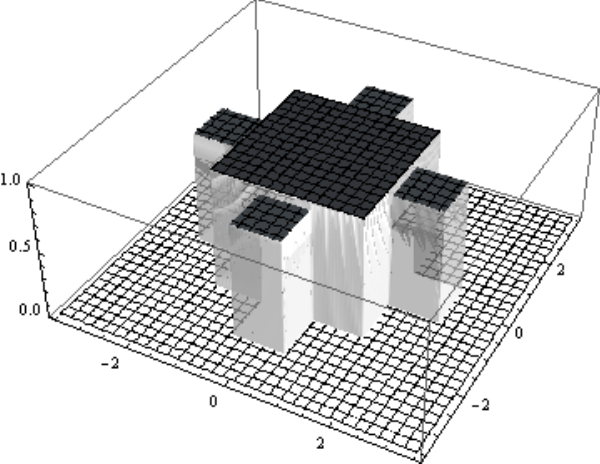}
        \caption{$t=0.08$.}
        \label{fig:exbreakingt0.08}
    \end{subfigure}
    \;
    \begin{subfigure}{0.3\textwidth}
        \includegraphics[width=\textwidth]{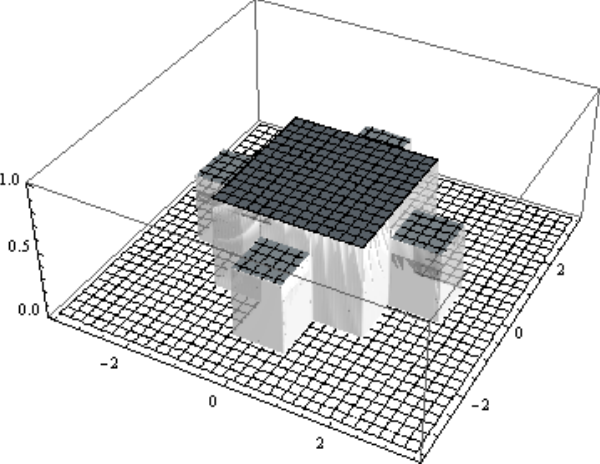}
        \caption{$t=0.24$.}
        \label{fig:exbreakingt0.24}
    \end{subfigure}
    \caption{Plots of $u(t,\cdot)$ from Example \ref{exbreaking} in certain time
instances $t$.}
\label{fig:exbreaking}
\end{figure}
\end{ex}

\begin{figure}[h]
    \centering
    \begin{subfigure}{0.3\textwidth}
        \includegraphics[width=\textwidth]{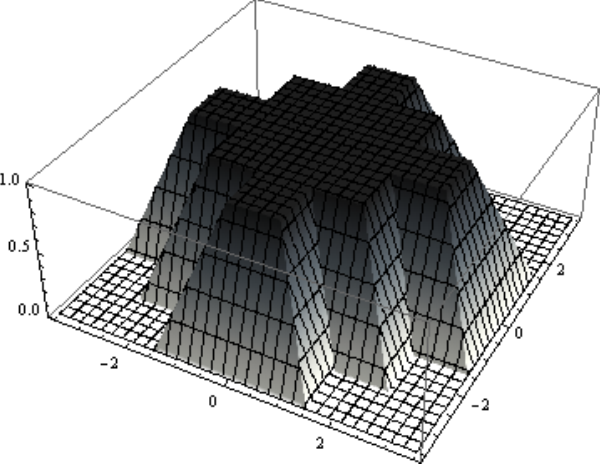}
        \caption{$t=0$.}
        \label{fig:excontt0}
    \end{subfigure}
    \;
    \begin{subfigure}{0.3\textwidth}
        \includegraphics[width=\textwidth]{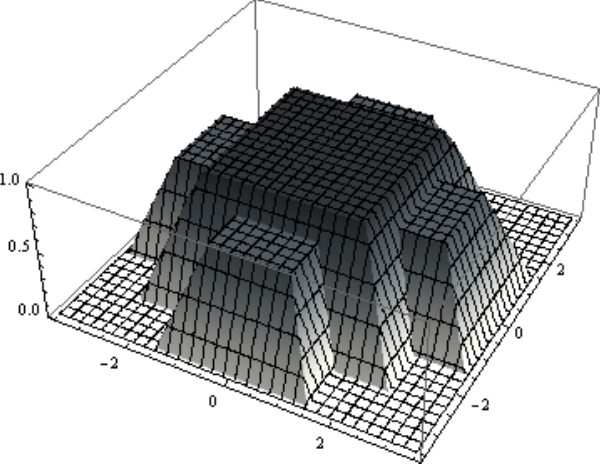}
        \caption{$t=0.08$.}
        \label{fig:excontt0.08}
    \end{subfigure}
    \;
    \begin{subfigure}{0.3\textwidth}
        \includegraphics[width=\textwidth]{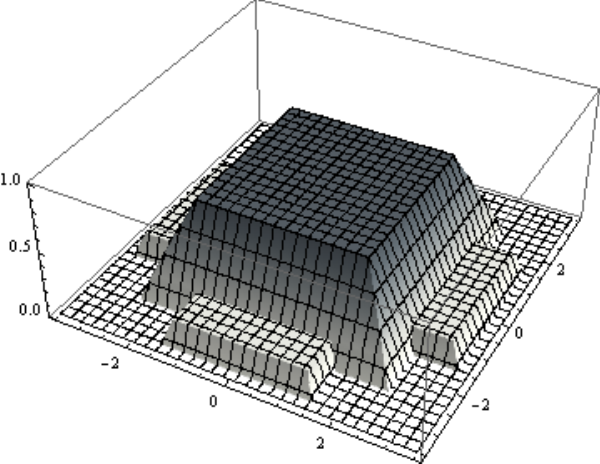}
        \caption{$t=0.24$.}
        \label{fig:excontt0.24}
    \end{subfigure}
    \caption{Plots of $u(t,\cdot)$ from Example \ref{excont} in certain time
instances $t$.}
\label{fig:excont}
\end{figure}
\begin{figure}[h!]
    \centering
    \begin{subfigure}{0.3\textwidth}
        \includegraphics[width=\textwidth]{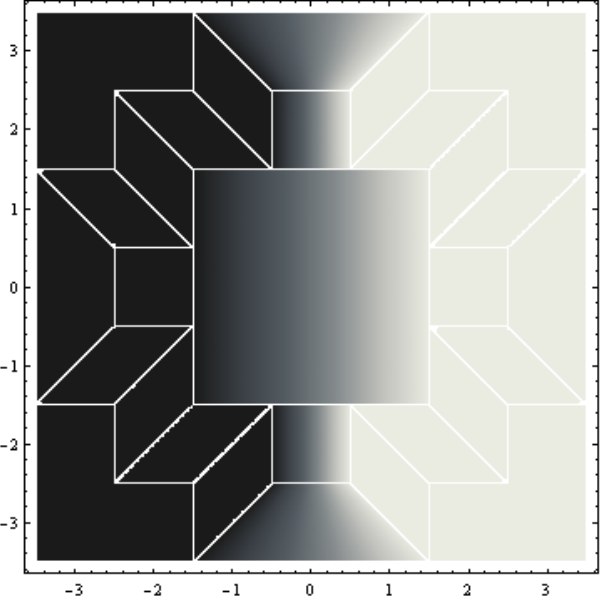}
        \caption{$t=0$.}
        \label{fig:excontzt0}
    \end{subfigure}
    \;
    \begin{subfigure}{0.3\textwidth}
        \includegraphics[width=\textwidth]{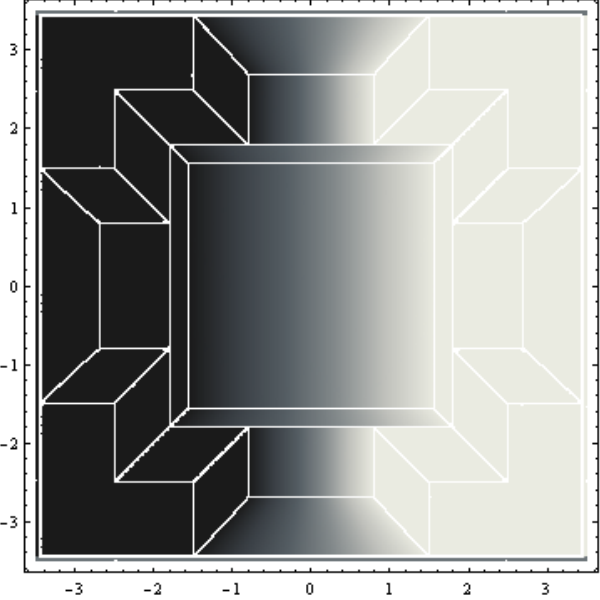}
        \caption{$t=0.08$.}
        \label{fig:excontzt0.08}
    \end{subfigure}
    \;
    \begin{subfigure}{0.3\textwidth}
        \includegraphics[width=\textwidth]{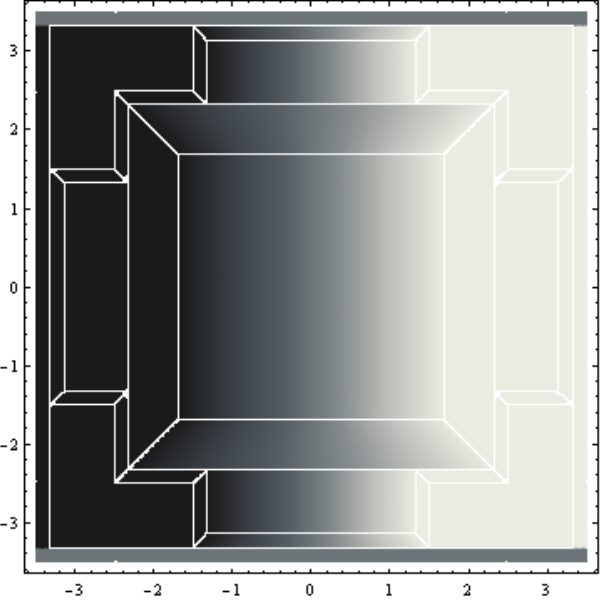}
        \caption{$t=0.24$.}
        \label{fig:excontzt0.24}
    \end{subfigure}
    \caption{Density plots of $z_1(t,\cdot)$ corresponding to $u(t,\cdot)$ from
Example \ref{excont} in certain time instances $t$. Black corresponds to
value $1$, ivory to $-1$; note the value $0$ outside the minimal strip
containing the support of $u(t, \cdot)$.}
\label{fig:excontz}
\end{figure}

On the other hand, Theorem \ref{contpres} asserts that if $u_0$ is (Lipschitz)
continuous,
the solution $u$ starting with $u_0$ is (Lipschitz) continuous in every time 
instance
$t>0$. For instance, if one extends the characteristic function form Example
\ref{exbreaking} continuously outside its support, no jumps will appear in the
evolution --- another manifestation of nonlocality of the equation.
\begin{ex}
\label{excont}
Here we
present Figure \ref{fig:excont}, depicting evolution $u$ of piecewise
linear continuous function $u_0$ obtained by extending the initial datum from
Example \ref{exbreaking} outside its support up to $0$ in such a way that
$\nabla u_0 \in \{(0,0),(1,0), (0,1)\}$. The evolution is
obtained by explicit identification of corresponding field $\z=(z_1,
z_2)$ under an ansatz that in each of a finite number of evolving regions
either $z_i = \pm 1$ or a $z_i$ is a linear interpolation of boundary
values, $i=1,2$ (see Figure \ref{fig:excontz}). This reduces the problem to
a decoupled infinite system of ODEs. The evolution obtained this way is the
strong solution starting with $u_0$ as it satisfies all the requirements in
Definition \ref{defatvf}. Figures \ref{fig:excont} and \ref{fig:excontz} are
obtained by solving numerically the system of ODEs using \emph{Mathematica}'s
\texttt{NDSolve} function. We omit the quite lengthy details.
\end{ex}

Next we provide an example showing that in non-convex rectilinear polygons 
(i.\,e.\;other than a rectangle) evolution starting with continuous initial 
datum may develop discontinuities. 
\begin{ex}
 \label{excrack}
 Let
 \[\Omega = \{(x_1, x_2) \colon |(x_1, x_2)|_\infty \leq 1, x_1 \leq 0, x_2
\leq 0\}, \quad u_0(x_1, x_2) = x_2\]
 and so $\nabla u(0, \cdot) \equiv (0,1)$, $\z(0,\cdot) \equiv (0,1)$. The
solution can be written explicitly, for $t \leq \frac{1}{8}$ we have
\[ u(t, x_1, x_2) = \left\{\begin{array}{ll}
                         -1 + \sqrt{2t} & \text{if } x_2 \leq -1 + \sqrt{2t},
\\
                         -\sqrt{2t} & \text{if } x_1 \geq 0 \text{ and } x_2
\geq - \sqrt{2t}, \\
1 - \sqrt{2t} & \text{if } x_1 < 0 \text{ and } x_2 \geq 1-\sqrt{2t}, \\
x_2 & \text{otherwise.}
                        \end{array}\right.\]

We see that regions where $\nabla u = 0$ appear
near the boundary and expand with speed $\frac{1}{\sqrt{2t}}$. In these
regions, $z_2$ is a linear interpolation between $0$ and $1$. Also a jump in
the $x_2$ direction appears near $\bx = 0$ and grows with the same speed.
\end{ex}
\begin{figure}[h!]
    \centering
    \begin{subfigure}{0.3\textwidth}
        \includegraphics[width=\textwidth]{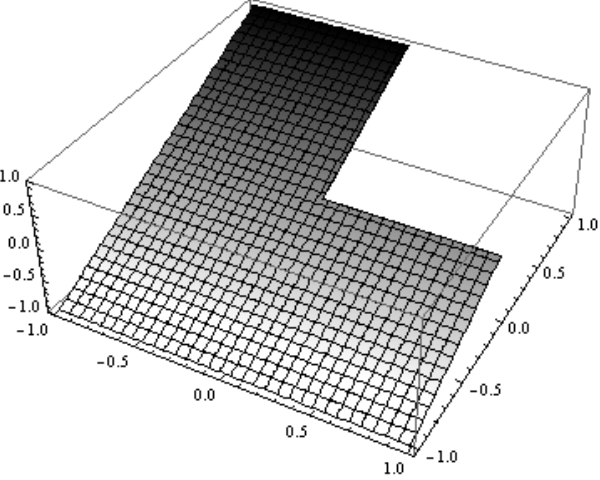}
        \caption{$t=0$.}
        \label{fig:excrackt0}
    \end{subfigure}
    \;
    \begin{subfigure}{0.3\textwidth}
        \includegraphics[width=\textwidth]{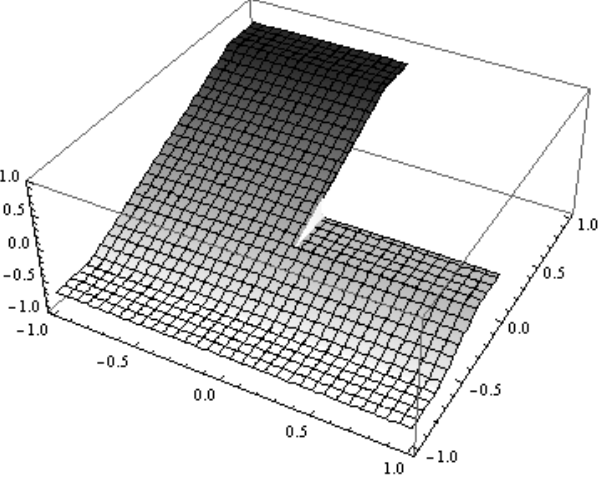}
        \caption{$t=0.04$.}
        \label{fig:excrackt0.04}
    \end{subfigure}
    \;
    \begin{subfigure}{0.3\textwidth}
        \includegraphics[width=\textwidth]{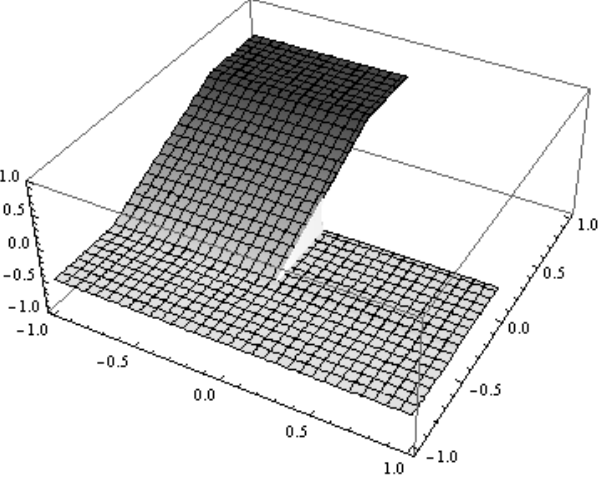}
        \caption{$t=0.12$.}
        \label{fig:excrackt0.12}
    \end{subfigure}
    \caption{Plots of $u(t,\cdot)$ from Example \ref{excrack} at certain time
instances $t$.}
\label{fig:excrack}
\end{figure}

Finally, let us present exact calculation for an example of the phenomenon of 
bending, which shows effectiveness of approximation with $PCR$ functions.
\begin{ex}\label{ex.1}
  Let $\Omega = \RR$, $u_0=\chi_{B_1(2)}$. We will show that the solution to
\eqref{1-tvflow} is given by $$u(t,\bx)=(1-v(\bx)t)_+,$$ with
$v=\frac{1}{2-\sqrt{2}}\chi_{B_\infty(\sqrt{2})\cap
B_1(2)}+\frac{1}{2-|\bx|_\infty}\chi_{B_1(2)\setminus B_\infty(\sqrt{2})}$.

 \smallskip In order to prove the claim, we approximate $B_1(2)$ by a
family of rectilinear polygons as follows. Given $n\in \mathbb N$, we define
inductively $$\left\{ \begin{array}{l} A_{1,1}:=B_\infty(1) \\
A_{k,1}:=B_\infty((\frac{2(k-1)}{n},0),1-\frac{k-1}{n})\setminus
A_{k-1,1} {\rm \ \ for \ }
k=2,\ldots,n\end{array}\right..$$ We observe that $\bigcup_{k=1}^n A_{k,1}$ is
an increasing sequence with respect to $n$ and that
$$\lim_{n\to\infty}\bigcup_{k=1}^n A_{k,1}=B_1(2)\cap\{x_1>0,-1\leq
x_2\leq 1\}.$$ By symmetry, we construct $A_{k,2}$, $A_{k,3}$ and $A_{k,4}$
for $k=1,\ldots, n$ such that  $$\lim_{n\to\infty}\bigcup_{k=1}^n
A_{k,2}=B_1(2)\cap\{x_2>0,-1\leq x_1\leq 1\},$$
 $$\lim_{n\to\infty}\bigcup_{k=1}^n A_{k,3}=B_1(2)\cap\{x_1<0,-1\leq
x_2\leq 1\},$$$$\lim_{n\to\infty}\bigcup_{k=1}^n
A_{k,4}=B_1(2)\cap\{x_2<0,-1\leq x_1\leq 1\}.$$ Therefore,
$B_1(2)=\lim_{n\to\infty} A_n:=\bigcup_{k=1}^n\bigcup_{j=1}^4 A_{k,j}.$

 \smallskip
 We let $C_k:=\bigcup_{j=1}^4 A_{k,1}$, for $k=1,\ldots,n$. Observe that the 
inequality $$\frac{{\rm Per}_1(C_1\cup\ldots \cup C_{k+1})}{|C_1\cup\ldots \cup 
C_{k+1})|}>\frac{{\rm Per}_1(C_1\cup\ldots \cup C_{k})}{|C_1\cup\ldots \cup 
C_{k})|}$$ holds (and therefore the facet $C_{k+1}$ breaks from $C_1\cup\ldots 
\cup C_{k})$ iff
 $$\frac{1}{1-\frac{k}{n}}>\frac{{\rm Per}_1(C_1\cup\ldots \cup 
C_{k})}{|C_1\cup\ldots \cup 
C_{k})|}=2\frac{1+\frac{k-1}{n}}{1+\frac{2(k-1)}{n}(1-\frac{k}{2n})}
\leftrightarrow \frac{k}{n}\geq 
\sqrt{\left(1-\frac{1}{2n}\right)^2+1}-\left(1-\frac{1}{2n}\right).$$

 Since the speed of  $C_j$ is  given by $\frac{1}{1-\frac{j}{n}}$ (which 
increases with respect to $j=k+1,\ldots,n$), once $C_{k+1}$ breaks from  
$C_1\cup\ldots \cup C_{k})$, so do $C_{j}$ from $C_{j-1}$ for $j=k+1,\ldots,n$.
 Therefore, the solution for $u_{0,n}=\chi_{A_n}$ is given by 
\begin{equation}\label{ex.1.appr.}u_n(t,\bx)=\left(1-\frac{{\rm 
Per}_1(C_1\cup\ldots \cup C_{k})}{|C_1\cup\ldots \cup 
C_{k})|}t\right)_+\chi_{(C_1\cup\ldots\cup 
C_k)}+\sum_{j=k+1}^n\left(1-\frac{1}{1-\frac{j}{n}}t\right)_+\chi_{C_j},
\end{equation} 
with $k\in\mathbb N$ satisfying $$[k-1]<  
n\left(\sqrt{\left(1-\frac{1}{2n}\right)^2+1}-\left(1-\frac{1}{2n}
\right)\right)\leq [k].$$ Letting $n\to\infty$ in \eqref{ex.1.appr.}, we finish 
the proof.

\end{ex}
\begin{figure}[h!]
    \centering
    \begin{subfigure}{0.3\textwidth}
        \includegraphics[width=\textwidth]{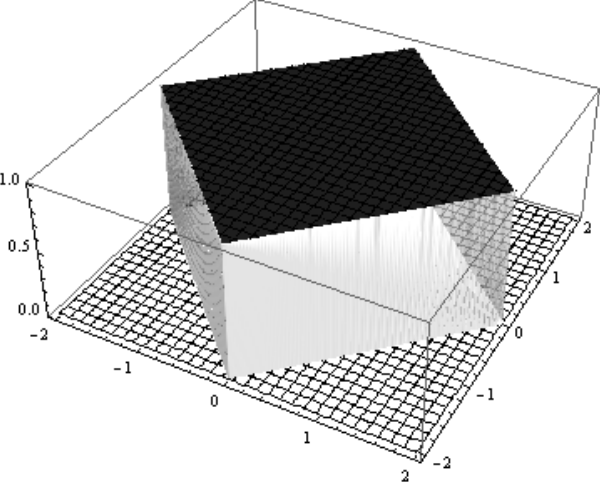}
        \caption{$t=0$.}
        \label{fig:ex1t0}
    \end{subfigure}
    \;
    \begin{subfigure}{0.3\textwidth}
        \includegraphics[width=\textwidth]{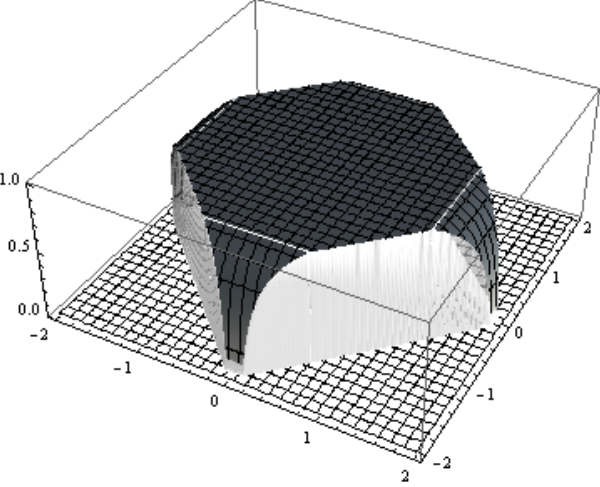}
        \caption{$t=0.1$.}
        \label{fig:ex1t0.1}
    \end{subfigure}
    \;
    \begin{subfigure}{0.3\textwidth}
        \includegraphics[width=\textwidth]{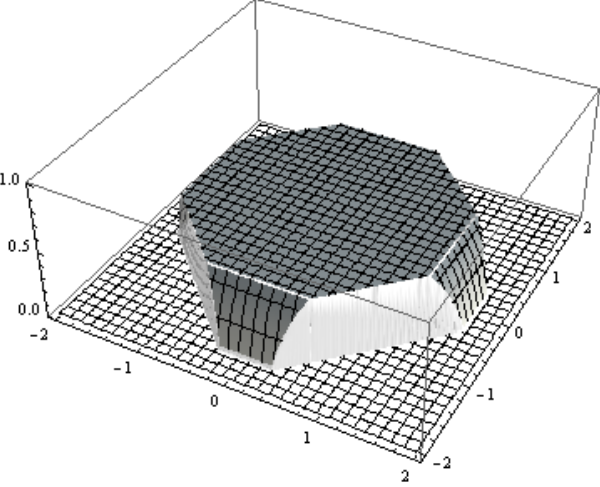}
        \caption{$t=0.3$.}
        \label{fig:ex1t0.3}
    \end{subfigure}
    \caption{Plots of $u(t,\cdot)$ from Example \ref{ex.1} at certain time
instances $t$.}
\label{fig:ex1}
\end{figure}
The evolution of a bounded convex domain $C$
satisfying an interior ball condition was explicitly given in \cite[Section
8.3]{cachamono}. There, the authors defined a notion of anisotropic variational 
mean
curvature, denoted by $H_C\colon \mathbb R^N\to ]-\infty,0]$, based on the
solvability of some auxiliary minimizing problems and on the existence of a
Cheeger set in $C$. Then, the solution to \eqref{atv} with data $u_0=\chi_C$ was
given by $$u(t,\bx)=(1+H_C(\bx)t)_+\chi_{C}(\bx).$$ In general, it is not 
obvious how to
compute this anisotropic variational mean curvature. However, Example \ref{ex.1}
shows that one can compute it by approximation of the set $C$ with rectilinear
polygons, even in the case that $C$ does not satisfy the interior ball
condition. Note that the solution
starting with initial datum as in Example \ref{ex.1} was calculated with an
approximate numerical procedure in \cite{muszkieta}. Here, we provided the exact
evolution in this case.

\section{Conclusions}
The core of our results is the explicit construction of
\emph{tetris-like solutions}, i.\,e.\;solutions in the
$PCR$ class. This class can be viewed as a natural generalization of
mono-dimensional step functions, whose finite dimensional structure
allows to effectively reduce the original nonlinear problem. The
directional diffusion allows to analyze the solutions only in a grid
given by a suitably chosen initial datum. We treat them as generic objects in
the set of all weak solutions, hence $PCR$ solutions are indeed
smooth functions in the new analytical language exclusively dedicated to our
variational problem. The detailed prescription of solutions allows to prove
even conservation of moduli of continuity for continuous initial data. It,
unexpected, removes this classical viewpoint on the issue of
solvability out of our interests. Just information obtained for $PCR$ functions
is much more complete than any knowledge of regularity in the classical setting.

At the end we would like to say a few words about the weakness of the approach.
The procedure works due to the possibility of introducing a grid. It
is the consequence of symmetry given by the $|\cdot|_1$ norm, the grid is just
determined by directions $\hat{e}_{x_1}$, $\hat{e}_{x_2}$ for the initial datum.
Here we have a natural shift symmetry, and the same structure at each vertex
of the grid. It seems that it would be possible to attempt to repeat at least
some of our analysis for anisotropic norms that generate a tiling of the
plane. Here we think of $|\cdot|_\phi$ determined by the hexagon, and tiling
given by honeycomb structure.
We are highly limited by regular tiling (triangular, rectangular and hexagonal),
however it seems to be possible to introduce more complex structure for
different anisotropy.
Such problems will definitely require new framework not linked to the classical analysis.

\section*{Acknowledgements}
The authors want to thank 
the anonymous referees whose comments really helped improve the quality of the 
paper. Thanks are also due to Micha{\l} Mi\' skiewicz for careful reading of 
the manuscript and pointing out some shortcomings. 

All figures were prepared using Wolfram Mathematica. 

The first
author has been supported by the grant of the National Science
Centre, Poland no.\;2014/13/N/ST1/02622. The second author acknowledges partial 
support by the Spanish MINECO and FEDER project
MTM2015-70227-P as well as the Simons Foundation
grant 346300 and the Polish Government MNiSW 2015-2019 matching 
fund.

\bibliographystyle{siamplain}
 \bibliography{aniso}
\end{document}